\documentclass{tac}
\usepackage{graphicx} % Required for inserting images
\usepackage{tikz-cd}
\usepackage{fullpage}
\usepackage{amssymb}
\usepackage[nointegrals]{wasysym}
\usepackage{url}
\usepackage{enumerate}
\usepackage[numbers]{natbib} % <-- add this

\tikzcdset{
	arrow style=tikz, diagrams={>=stealth},
	row sep=large, column sep=large
}

\usepackage{xcolor}

\usepackage{mathtools}
\usepackage{suffix}
\usepackage{amsfonts}
\usepackage{mathpartir}
\usepackage{enumitem}
\usepackage{stmaryrd}
\usepackage[all]{xy}
\usepackage{twoopt}

\newcommand\op{\mathrm{op}}

\makeatother

\newcommand\Indexed{\mathsf{Indexed}}
\newcommand\Fibered{\mathsf{Fibred}}

\newcommand\StrictIndexed{\mathsf{StrictInd}}

\newcommand\xxxx{\chi}
\newcommand\yyyy{\omega}
\newcommand\llll{\ell}
\newcommand\mmmm{\mathsf{m}}

\newcommand\Sub{\mathsf{Sub}}

  % coprojection for coproducts

\newcommand{\Fam}[2][]{\mathbf{Fam}_{#1}(#2)}

\newcommand{\Dist}[1]{\mathbf{Dist}(#1)}

\newcommand{\sumfam}[2]{[#1\mid #2]}
\newcommand{\prodfam}[2]{\langle #1\mid #2\rangle}

\newcommand\Top{\mathbf{Top}}
\newcommand\Pos{\mathbf{Pos}}
\newcommand\pSet{\mathbf{pSet}}

\newcommand\Vect{\mathbf{Vect}}

\newcommand\CMon{\mathbf{CMon}}

\newcommand\Dial{\mathbf{Dial}}
\newcommand\Dill{\mathbf{Dill}}

\newcommand\self{\mathrm{self}
}

\newcommand\SET{\mathbf{SET}}
\newcommand\CAT{\mathbf{CAT}}
\newcommand\Cat{\mathbf{Cat}}
\newcommand\twoCat{\mathbf{2Cat}}
\newcommand\twoCAT{\mathbf{2CAT}}

\newcommand\explainr[1]{&\mbox{\pushright{\color{gray}\scriptsize\{\;\textnormal{#1}\;\}}}}

\definecolor{shade}{RGB}{223,223,223}
\definecolor{unshade}{RGB}{255,255,255}

\usepackage{tcolorbox}
\newtcbox{\shadebox}{on line,arc=1pt, outer arc=2pt,%
	colback=shade,colframe=shade,boxsep=0pt,%
	left=1pt,right=1pt,top=2pt,bottom=2pt,%
	boxrule=0pt,bottomrule=1pt,toprule=1pt}

\newtcbox{\unshadebox}{on line,arc=1pt, outer arc=2pt,%
	colback=unshade,colframe=shade,boxsep=0pt,%
	left=1pt,right=1pt,top=2pt,bottom=2pt,%
	boxrule=0pt,bottomrule=1pt,toprule=1pt}

%-- Semantics --

\newcommand{\wCpo}{\mathbf{\boldsymbol\omega CPO}}

\newcommand{\CTop}{\mathbf{Top}}
\newcommand{\Set}{\mathbf{Set}}

\newcommand\inv[1]{#1^{-1}}

\WithSuffix\newcommand\inv+[1]{\parent{#1}^{-1}}

\newcommand\initial{\mathbf{0}}
\newcommand\terminal{\mathbf{1}}

\newcommand\isomorphic\cong

\newcommand\cat[1]{\mathcal{#1}}

\newcommand\catB{\cat{B}}
\newcommand\catC{\cat{C}}
\newcommand\catD{\cat{D}}
\newcommand\catE{\cat{E}}

\newcommand\catM{\cat{M}}
\newcommand\catL{\cat{L}}
\newcommand\catS{\cat{S}}

\newcommand\catV{\cat{V}}

\newcommand\depproj[2]{\mathbf{p}_{#1,#2}}

\newcommand\algebras[2]{#1\text{-}\mathbf{Alg}(#2)}

\newcommand\leftmultimap{\multimap}

\newcommand\ev[1][]{\mathrm{ev}^{#1}}
\newcommand\evf[1][]{\mathrm{ev1}^{#1}}

\newcommand\id[1][]{{\mathrm{id}_{#1}}}

\newcommand\xto\xrightarrow

\newcommand\colim{\mathrm{colim}\,}
\renewcommand\lim{\mathrm{lim}\,}

\newcommand\ob[1]{\mathrm{ob}\,#1}

\newcommand{\defeq}{\stackrel {\mathrm{def}}=}

\newcommand{\compldom}{\partial^{c}}

\newcommand{\abstrcompldom}{\mathfrak{d}^{c}}

\newtheorem{counterexample}[theorem]{Counterexample}

\title{\bf Monoidal closure of Grothendieck constructions via
$\Sigma$-tractable monoidal structures and Dialectica formulas}
\author{Fernando Lucatelli Nunes and Matthijs V\'ak\'ar}
\address{Departement Informatica, Universiteit Utrecht, Nederland}
 \eaddress{(1):fernandolucatellinunes@gmail.com, and (2):m.i.l.vakar@uu.nl}
\amsclass{18N10, 18C10, 18F10, 18A22, 18A40, 18D10, 03B70, 68Q55, 03F52, 03F03}
\date{\today}

\allowdisplaybreaks
\keywords{Grothendieck constructions, limits, colimits, exponentials, cartesian closed categories, monoidal closed categories, free coproduct completion, oplax colimits, lax comma categories, extensive indexed categories, left Kan extensive indexed categories}
\begin{document}

\maketitle

\begin{abstract}
	We examine the categorical structure of the Grothendieck construction 
	$\Sigma_{\mathsf{C}}\mathsf{L}$ of an indexed category 
	$\mathsf{L}\colon \mathsf{C}^{op}\to\mathsf{CAT}$. 
	Our analysis begins with characterisations of fibred limits, colimits, and 
	monoidal (closed) structures. 
	The study of fibred colimits leads naturally to a generalisation of the notion 
	of \emph{extensive indexed category} introduced in 
	\emph{CHAD for Expressive Total Languages}, 
	and gives rise to the concept of \emph{left Kan extensivity}, 
	which provides a uniform framework for computing colimits 
	in Grothendieck constructions. 
	
	We then establish sufficient conditions for the (non-fibred) monoidal closure 
	of the total category $\Sigma_{\mathsf{C}}\mathsf{L}$. 
	This extends G\"odel’s Dialectica interpretation, and rests upon a new 
	notion of \emph{$\Sigma$-tractable monoidal structure}. 
	Under this notion, $\Sigma$-tractable coproducts unify and extend 
	cocartesian coclosed structures, biproducts, and extensive coproducts. 
	Finally, we consider when the induced closed structure is fibred, 
	showing that this need not hold in general, even in the presence 
	of a fibred monoidal structure.
\end{abstract}

\setcounter{tocdepth}{1}

\tableofcontents
\newpage
\section*{Introduction}

It is a familiar fact that an indexed family of sets
$S(-) \colon I \to \Set$
can be represented equivalently by a pair of sets
$\sum_{i \in I} S_i$ and $I$, together with a projection
$\pi_1 \colon \sum_{i \in I} S_i \to I$.
In direct analogy, an indexed category
$L \colon \catC^{op} \to \CAT$
may be represented by a category $\Sigma_{\catC} L$ equipped with a projection
$\pi_1 \colon \Sigma_{\catC} L \to \catC$
which is a cloven fibration.
This correspondence between indexed categories and fibrations is the
\emph{Grothendieck construction}, also known as the
\emph{$\Sigma$-type of categories}.
It is a fundamental device, providing a categorical framework for
dependent structure, and it reappears throughout mathematics,
logic, and theoretical computer science.

One of our main objectives is to understand the structural properties
of categories of the form $\Sigma_{\catC} L$,
including the existence of limits, colimits, and closed structures.
While the behaviour of limits in Grothendieck constructions is well understood,
the situation for colimits is more subtle.
Several relevant results are part of the established folklore, yet,
to our knowledge, they have not been explicitly formulated in the literature,
in particular regarding their connection to (op)lax limits.
We provide necessary and sufficient conditions for the existence of
fibred limits and colimits, and relate these to suitable generalisations
of classical notions such as extensivity.

From these considerations we derive a general principle that affords
a uniform treatment of colimits in Grothendieck constructions.
This leads naturally to the notion of \emph{left Kan extensivity},
which generalises the concept of extensive indexed category introduced in~\cite{nunes2023chad},
and characterises when colimits in the total category arise coherently
from those in the base and in the fibres.

We then turn to the question of when the total category
$\Sigma_{\catC} L$ carries a monoidal or cartesian closed structure.
Our analysis extends ideas originating in G\"odel’s Dialectica interpretation
and is formulated in terms of a new notion of
\emph{$\Sigma$-tractable monoidal structure}.
This provides a unified account of the existence of closed structures
across a wide range of examples, and clarifies how the general properties of
Grothendieck constructions account for the behaviour of these
more intricate instances.

\paragraph*{\textbf{Contributions}}
In summary, this paper makes the following contributions:
\begin{enumerate}
	\item a proof of necessary and sufficient conditions for the existence of fibred limits in a Grothendieck construction $\Sigma_\catC \catL \to \catC$ (Theorem~\ref{thm:limits-groth});
	while similar results are known~\cite{MR0213413},
	our formulation appears to be new;
	\item a proof of necessary and sufficient conditions for the existence of \emph{fibred colimits} in a Grothendieck construction (Theorem~\ref{thm:grothendieck-colim});
	unlike the case of limits, this result seems to be novel in the literature;
	in particular, our characterisation of colimits introduces natural generalisations of the classical notion of extensive categories and the notion of extensive indexed categories from~\cite{nunes2023chad};
	\item necessary and sufficient conditions for fibred monoidal closure of a Grothendieck construction $\Sigma_\catC \catL \to \catC$ (Theorem~\ref{thm:fibred-monoidal-closed});
	\item the introduction of the notion of \emph{$\Sigma$-(co)tractable monoidal structure}, together with examples showing that many monoidal categories arising in practice satisfy this property (Section~\ref{sec:sigma-tractable-monoidal});
	\item a proof that $\Sigma$-cotractability, combined with the existence of $\Pi$-types (fibred products), yields sufficient conditions for a Grothendieck construction over a suitable\footnote{A model of dependent type theory with $\Pi$-types and strong $\Sigma$-types.} base category to be (non-fibred) monoidal closed, via a generalised Dialectica formula for exponentials (Theorem~\ref{theo:grothendieck-closed});
	\item several new examples of non-fibred monoidal closed and cartesian closed structures arising naturally from Grothendieck constructions (Section~\ref{sec:exponentials-in-grothendieck}).
\end{enumerate}

\paragraph*{\textbf{A remark on size and set-theoretic foundations}}
We work within a standard von Neumann–Bernays–Gödel set theory as a basis for our constructions.
In particular, we assume a predicative hierarchy of universes, of which we employ only the first three levels.
We refer to sets at these levels as \emph{small}, \emph{large}, and \emph{very large}.
Unless stated otherwise, all categories are assumed to be large but locally small, meaning that each hom-set is small.
We write $\Set$, $\Cat$, and $\twoCat$ for the large, locally small $(2$-)categories of small sets, small categories, and small $2$-categories, respectively.
We also use $\SET$, $\CAT$, and $\twoCAT$ for the corresponding very large, locally large $(2$-)categories of large sets, large categories, and large $2$-categories.

\section{Two-dimensional category theory}
We begin by fixing some conventions for terminology in $2$-dimensional category theory, with particular attention to pseudofunctors, oplax natural transformations, and oplax (co)limits. Our notation and conventions align with the standard usage in the modern literature on $2$-dimensional category theory; see, for example, \cite{zbMATH05659661, zbMATH05770299, arXiv:1711.02051, 2021arXiv211210072P}.  
We start by recalling the definition of a pseudofunctor, \textit{e.g.}~\cite{zbMATH0681654, MR3491845}.

\begin{definition}[Pseudofunctor]\label{def:pseudofunctor}
Let \(\mathcal C, \mathcal D\) be 2-categories.  A pseudofunctor 
\[
L \colon \mathcal C \to \mathcal D
\]
is a pair \( L = (L, \llll )\) consisting of the following data:
\begin{enumerate}[label=\textbf{(\alph*)}, leftmargin=2em, itemsep=0.3em]
  \item A function on objects \(L : \mathrm{Ob}(\mathcal C)\to \mathrm{Ob}(\mathcal D)\).
  \item For each pair of objects \(X,Y\in \mathcal C\), a functor
    \[
      L_{X,Y}\colon \mathcal C(X,Y) \;\to\; \mathcal D(LX,\,LY).
    \]
  \item For each composable pair 
    \[
      X \xrightarrow{g} Y \xrightarrow{h} Z
    \]
    of 1-cells 
    in \(\mathcal C\), an invertible 2-cell in \(\mathcal D\)
    \[
     L^{hg} = \llll _{hg} \colon L(h)\circ L(g)\;\Longrightarrow\; L(h\circ g).
    \]
  \item For each object \(X\in \mathcal C\), an invertible 2-cell in \(\mathcal D\)
    \[
      L^X = \llll_X \colon \mathrm{id}_{LX}\;\Longrightarrow\; L(\mathrm{id}_X).
    \]
\end{enumerate}
These data are required to satisfy the following three coherence axioms below.
\begin{itemize}
\item \textbf{Associativity:} Equation \eqref{Eq:Associativity}  holds for any composable triple of 1-cells 
$$ W \xrightarrow{f} X \xrightarrow{g} Y \xrightarrow{h} Z$$
in $\mathcal C$.

\item \textbf{Identity:} Equation \eqref{Eq:Identity} holds for any morphism $f: W\to X$ in \(\mathcal C\).

\normalsize
\item \textbf{Naturality:} Equation \eqref{Eq:Naturality}  holds for any pair of $2$-cells	$\xxxx : g\Rightarrow \hat{g} $ and $\yyyy : f\Rightarrow \hat{f} $ in $\mathcal C $. 	
\end{itemize}	
\begin{equation}\label{Eq:Associativity} \tag{A-PS}
\xymatrix{ 
LW\ar[rr]^{L(f)}\ar[dd]_{L(hgf)}\ar[ddrr]|{L(gf)}
&&
LX\ar[dd]^{L(g)}\ar@{}[dl]|{\xLeftarrow{\llll _ {{}_{gf}}}} 
&&
LW\ar[rr] ^{L(f) }\ar[dd]_{L(hgf)}\ar@{}[dr]|{\xLeftarrow{\llll_ {{}_{(hg)f}}}}
&&
LX\ar[dd]^{L(g)}\ar[ddll]|{L(hg)}
\\
&&&=&&&
\\
LZ\ar@{}[ru]|{\xLeftarrow{\llll _ {{}_{h(gf)}}}}
&&
LY\ar[ll]^{L(h)}
&&
LZ 
&&
LY\ar[ll]^{L(h)} \ar@{}[ul]|{\xLeftarrow{\llll _ {{}_{hg}}}} 							
}
\end{equation}
\begin{equation}\label{Eq:Identity}\tag{Id-PS} 
\xymatrix{  LW     \ar[rr]^{L(f)}\ar[dd]_{L(\id _ {{}_X}f)}&&
LX\ar@/_6ex/[dd]|{L(\id _ {{}_X}) }
                    \ar@{}[dd]|{\xLeftarrow{\llll _ {{}_X}} }
                    \ar@/^6ex/[dd]|{\id _ {{}_{LX}} }
&&
LW\ar[dd]_{L(f\id _ {{}_W})}
&& 
LW\ar@{=}[ll]\ar@/_5.5ex/[dd]|{L(\id _ {{}_W}) }
                    \ar@{}[dd]|{\xLeftarrow{\llll _ {{}_W}} }
                    \ar@/^5.5ex/[dd]|{\id _ {{}_{LW}} }
										&&
LW\ar@/_4ex/[dd]|{L(f) }
                    \ar@{}[dd]|{=}
                    \ar@/^4ex/[dd]|{L(f) }										&
\\
&\ar@{}[l]|{\xLeftarrow{\llll _ {{}_{\id _ {{}_X} f }}}} &
&=&
&\ar@{}[l]|{\xLeftarrow{\llll _ {{}_{f\id _ {{}_W}  }}}} &
\phantom{A}\ar@{}[rr]|{=}&&& \\										
LX\ar@{=}[rr]&&
LX
&&
LX &&LW\ar[ll]^{L(f)}																				
&& LX	&						}
\end{equation}
\begin{equation}\label{Eq:Naturality}\tag{Nat-PS}
\xymatrix{  LX\ar[dddd]_{L(\hat{h}\hat{g})}\ar@{=}[r] &LX     \ar@{=}[r] 
                    \ar[dd]_{L(\hat{g})} 
                       & 
              {LX}  \ar[dd]^{L(g) }      && LX\ar[dddd]_{L(\hat{h}\hat{g})}\ar@{=}[r] &LX     \ar[rr]^{L(g)} 
                    \ar[dddd]^{L(hg)} 
                       && 
              {LY}  \ar[dddd]_{L(h)}\\
							&&{}\ar@{}[l]|{\xLeftarrow{L(\xxxx )}} && &&&
															\\
              &{LY}\ar@{}[l]|{\xLeftarrow{\llll _ {{}_{\hat{h}\hat{g}}}}} \ar[dd]_{L(\hat{h})} \ar@{=}[r] &
              {LY}\ar[dd]^{L(h)}   &=&  &{}\ar@{}[l]|{\xLeftarrow{L(\yyyy\ast\xxxx ) }} &\xLeftarrow{\hskip .5em \llll _ {{}_{hg}} \hskip .5em  }&
              \\
							&&{}\ar@{}[l]|{\xLeftarrow{L(\yyyy )}} && &&&
															\\
								LZ\ar@{=}[r] &LZ\ar@{=}[r]& LZ && LZ\ar@{=}[r] &LZ\ar@{=}[rr]&& LZ }
\end{equation}

\end{definition}

\begin{definition}[Unitors and Compositors]
	Let $L = (L, \llll) : \catC \to \catD$ be a pseudofunctor. As already suggested in Definition~\ref{def:pseudofunctor}, for each object $X$ of $\catC$ and each composable pair of morphisms
	\[
	X \xrightarrow{g} Y \xrightarrow{h} Z,
	\]
	we adopt the shorthand notation
	\begin{equation}
		L^X \defeq \llll_X,
		\qquad
		L^{hg} \defeq \llll_{hg}.
	\end{equation}
		This terminology allows us, when convenient, to suppress explicit mention of~$\llll$ and to speak simply of the pseudofunctor
	\[
	L : \catC \to \catD.
	\]
		The invertible $2$-cell $L^X$ is referred to as the \emph{unitor} at $X$, while $L^{hg}$ is called the \emph{compositor} corresponding to the pair $(h,g)$. 
\end{definition}
\noindent
One may define $2$-functors as $\CAT$-enriched functors between $2$-categories.  
However, with the definition of pseudofunctor in place, we may state the following equivalent formulation.

\begin{definition}[$2$-Functor]
	A \emph{$2$-functor} between $2$-categories $\catC$ and $\catD$ is a pseudofunctor 
	\[
	L \colon \catC \to \catD
	\]
	for which the unitor $L^X$ and the compositor $L^{hg}$ are identities, for every object $X$ of $\catC$ and every composable pair of morphisms
	\[
	X \xrightarrow{g} Y \xrightarrow{h} Z
	\]
	in $\catC$.
\end{definition}

\begin{definition}[Co-opposite]
Every $2$-category $\catC $ has a \emph{co-opposite} (or \emph{dual}), denoted $\catC^{\mathrm{co}}$. The objects of 
$\catC^{\mathrm{co}}$ are the same as the objects of $\catC$, and for each pair of objects $A,B$ in $\catC$,
	\begin{equation} 
	\catC^{\mathrm{co}}\left( A, B \right)\defeq  \left( \catC\left( A, B \right)\right) ^\op. 
	\end{equation}  
The composition is, then,  defined by $\circ_{\catC} ^\op $, where $\circ _{\catC} $ is the composition functor of $\catC$.
More precisely, for each triple $(A,B,C)$  of objects, we define
\begin{equation} 
\left( \circ _{\catC ^{\mathrm{co}}}\right)_{ABC} \defeq \left( \circ_{\catC}\right) _{ABC} ^\op  :
\catC^{\mathrm{co}}\left( B, C \right) \times \catC^{\mathrm{co}}\left( A, B \right)\to 	\catC^{\mathrm{co}}\left( A, C \right)
\end{equation} 	
\noindent
With this definition, each pseudofunctor $L = (L, \llll ) : \catC\to \catD $  induces a 
\textit{co-opposite} $$L ^\mathrm{co} = (L ^\mathrm{co}, \llll ^\mathrm{co} ) : \catC^\mathrm{co}\to \catD^\mathrm{co} $$ where
\begin{enumerate}[label=\textbf{(\alph*)}, leftmargin=2em, itemsep=0.3em]
	\item we define \(L ^\mathrm{co}\defeq L : \mathrm{Ob}(\mathcal C^\mathrm{co} ) = \mathrm{Ob}(\mathcal C ) \to  \mathrm{Ob}(\mathcal D  ) = \mathrm{Ob}(\mathcal D ^\mathrm{co} )\);
    \item For each pair of objects \(X,Y\in \mathcal C\), set
\[
L^\mathrm{co}_{X,Y}\defeq (L_{X,Y})^{\op} : \mathcal C^\mathrm{co}(X,Y) \;\to\; \mathcal D ^\mathrm{co}(LX,\,LY).
\]
\item For each composable pair \(X \xrightarrow{g} Y \xrightarrow{h} Z\) in \(\mathcal C^\mathrm{co}\), define the compositor
\[
\llll^\mathrm{co}_{hg} \defeq \big(\llll_{hg}\big)^{-1} : L(h)\circ L(g)\Longrightarrow L(h\circ g)
\]
(viewed in \(\mathcal D^{\mathrm{co}}\)).
\item For each \(X\in \mathcal C\), define the unitor
\[
\llll^\mathrm{co}_X \defeq \big(\llll_X\big)^{-1} : \mathrm{id}_{LX}\Longrightarrow L(\mathrm{id}_X)
\]
(viewed in \(\mathcal D^{\mathrm{co}}\)).

\end{enumerate}
\end{definition} 	
We now introduce our conventions concerning (op)lax natural transformations.  
Classical treatments may be found in \cite{zbMATH0681654, zbMATH03447118}, while a succinct exposition is provided in \cite{leinster1998basicbicategories}.

\begin{definition}[Oplax Natural Transformation]
Let $ L =  (L, \llll ), M = (M , \mmmm ) \colon \catC \to \catD$ be pseudofunctors between 2-categories.  
An \emph{oplax natural transformation}
\[
\theta \colon L \longrightarrow M
\]
consists of the following data:
\begin{enumerate}[label=\textbf{(\alph*)}, leftmargin=2em, itemsep=0.3em]
  \item for each object $X \in \mathcal C$, a $1$-cell 
    \[
      \theta_X \colon L(X) \;\to\; M(X);
    \]
    in $\mathcal D$;
  \item For each $1$-cell $f \colon X \to Y$ in $\mathcal C$, a $2$-cell 
    \[
      \theta_f \colon  \theta_Y \circ L(f)\;\Longrightarrow\; M(f)\circ \theta_X.
    \]
    in $\mathcal D$.
\end{enumerate}
These data are required to satisfy the following three coherence conditions.
\begin{enumerate}
\item \textbf{Identity (Unit Coherence):} For every object $X\in\mathcal C$, Equation \eqref{eq:identity-oplaxnatural} holds.
\item \textbf{Associativity (Composition Coherence):} For any composable pair of $1$-cells 
\[
X \xrightarrow{f} Y \xrightarrow{g} Z,
\]
in $\mathcal C$, Equation \eqref{eq:associativity-oplaxnatural} holds.
\item \textbf{Naturality:} For any $2$-cell $\alpha \colon f \Rightarrow f'$ in $\mathcal C$, Equation \eqref{eq:natural-oplaxnatural} holds.
\end{enumerate}
\begin{equation}\label{eq:identity-oplaxnatural}\tag{Id-OL}
	\begin{tikzcd}[row sep=large, column sep=huge]
		L X \arrow[d, "\theta_X"'] \arrow[rr, "\id _{LX}"] &
		&
		L X \arrow[d, "\theta_X"] \\
		M X \arrow[rr,swap, "\id_{MX}"'] \arrow[rr, bend right=50, "M(\id_{X})"'] &
		\phantom{A}  &
		M X\\
		&\phantom{A}&
		\arrow[Rightarrow, from=1-3, to=2-1, "\id_{\theta_X}" description]
		\arrow[Rightarrow, from=2-2, to=3-2, "\mmmm_X" description]
	\end{tikzcd}
	\;=\;
\begin{tikzcd}[row sep=large, column sep=huge]
		&\phantom{A}& \\ 
	L X \arrow[rr, swap, bend left=60, "\id _{LX}"'] \arrow[d, "\theta_X"'] \arrow[rr,swap, "L(\id_{X})"] & \phantom{A}
	&
	L X \arrow[d, "\theta_X"] \\
	M X \arrow[rr, "M (\id_{X} )"']  &
	  &
	M X
		\arrow[Rightarrow, from=1-2, to=2-2, "\llll_{X}" description]
	\arrow[Rightarrow, from=2-3, to=3-1, "\theta_{\id _ X}" description]
\end{tikzcd}
\end{equation} 

\begin{equation}\label{eq:associativity-oplaxnatural}\tag{A-OL}
\begin{tikzcd}[row sep=large, column sep=huge]
L X \arrow[d, "\theta_X"'] \arrow[r, "L (f)"] &
L Y \arrow[d, "\theta_Y"] \arrow[r, "L (g)"] &
L Z \arrow[d, "\theta_Z"] \\
M X \arrow[r, "M (f)"'] \arrow[rr, bend right=50, "M(g\circ f)"'] &
M Y \arrow[r, "M (g)"'] &
M Z\\
&\phantom{A}&
\arrow[Rightarrow, from=1-2, to=2-1, "\theta_f" description]
\arrow[Rightarrow, from=1-3, to=2-2, "\theta_g" description]
\arrow[Rightarrow, from=2-2, to=3-2, "\mmmm_{gf}" description]
\end{tikzcd}
\;=\;
\begin{tikzcd}[row sep=large, column sep=huge]
&LY \arrow[dr, swap, "L(g)"'] &\\
L X \arrow[d, "\theta_X"'] \arrow[rr, swap, "L(g\circ f)"]\arrow[ur, swap, "L(f)"']  &\phantom{A}&
L Z \arrow[d, "\theta_Z"] \\
M X \arrow[rr, "M(g\circ f)"'] &&
M Z
\arrow[Rightarrow, from=2-3, to=3-1, "\theta_{gf}" description]
\arrow[Rightarrow, from=1-2, to=2-2, "\llll_{gf}" description]
\end{tikzcd}
\end{equation} 

\begin{equation}\label{eq:natural-oplaxnatural}\tag{Nat-OL}
	\begin{tikzcd}[row sep=large, column sep=huge]
		L X \arrow[d, "\theta_X"'] \arrow[rr, "L(f)"] &
		&
		L Y \arrow[d, "\theta_Y"] \\
		M X \arrow[rr,swap, "M(f)"'] \arrow[rr, bend right=50, "M(f')"'] &
		\phantom{A}  &
		M Y\\
		&\phantom{A}&
		\arrow[Rightarrow, from=1-3, to=2-1, "\theta _f" description]
		\arrow[Rightarrow, from=2-2, to=3-2, "M(\alpha)" description]
	\end{tikzcd}
	\;=\;
	\begin{tikzcd}[row sep=large, column sep=huge]
		&\phantom{A}& \\ 
		L X \arrow[rr, swap, bend left=60, "\id _{LX}"'] \arrow[d, "\theta_X"'] \arrow[rr,swap, "L(f')"] & \phantom{A}
		&
		L Y \arrow[d, "\theta_Y"] \\
		M X \arrow[rr, "M (f' )"']  &
		&
		M Y
		\arrow[Rightarrow, from=1-2, to=2-2, "L(\alpha )" description]
		\arrow[Rightarrow, from=2-3, to=3-1, "\theta_{f'}" description]
	\end{tikzcd}
\end{equation} 
\noindent
A \emph{lax natural transformation} 
$\gamma \colon L \longrightarrow M$
consists of an oplax natural transformation $\widehat{\gamma} \colon L^{\mathrm{co}} \longrightarrow M^{\mathrm{co}}$.
A \emph{pseudonatural transformation}
$\beta \colon L \longrightarrow M $
is an oplax natural transformation such that each component $\beta_f$ is invertible.  
Finally, a \emph{$2$-natural transformation} $ \omega \colon L \longrightarrow M $ is a pseudonatural transformation whose components $\omega_f$ are identities for all $1$-cells $f$ in~$\catC$.
\end{definition}

\begin{remark}[Colax natural transformations]
	The terms \emph{oplax} and \emph{colax} are commonly used interchangeably in the literature, as they denote the same notion.  
	In particular, \emph{colax natural transformations} are the same as \emph{oplax natural transformations}.  
	Throughout this work, we treat the two terms as synonymous.
\end{remark}

\begin{definition}[Modification]\label{def:Modification-2-category}
	Let $L, M \colon \catC \to \catD$ be pseudofunctors between $2$-categories, and let 
	$ \gamma , \theta \colon L \Rightarrow M$ be \emph{lax natural transformations}. 
	A \emph{modification}
	\[
	\xi \colon \gamma\Rrightarrow \theta 
	\]
	consists of a family of $2$-cells in $\catD$
	\[
	(\,\xi_X \colon \theta_X \Rightarrow \gamma_X\,)_{X \in \mathrm{Ob}(\catC)}
	\]
	such that, for every $1$-cell $f \colon X \to Y$ in $\catC$, Equation \eqref{eq:natural-oplaxnatural-modification} holds.
\begin{equation}\label{eq:natural-oplaxnatural-modification}\tag{Nat-M}
	\begin{tikzcd}[row sep=large, column sep=huge]
		L X \arrow[d,swap, "L(f)"] \arrow[rr, "\theta_X"] &
		&
		MX \arrow[d, "M(f)"] \\
		LY\arrow[rr,swap, "\theta_Y"'] \arrow[rr, bend right=60, "\gamma _Y"'] &
		\phantom{A}  &
		M Y\\
		&\phantom{A}&
		\arrow[Rightarrow, from=2-1, to=1-3, "\theta _f" description]
		\arrow[Rightarrow, from=3-2, to=2-2, "\xi _Y" description]
	\end{tikzcd}
	\;=\;
	\begin{tikzcd}[row sep=large, column sep=huge]
		&\phantom{A}& \\ 
		L X \arrow[rr, swap, bend left=60, "\theta _X"'] \arrow[d, "L(f)"'] \arrow[rr,swap, "\gamma _X"] & \phantom{A}
		&
		M X \arrow[d, "M(f)"] \\
		L Y \arrow[rr, "\gamma_Y"']  &
		&
		M Y
		\arrow[Rightarrow, from=2-2, to=1-2, "\xi _X" description]
		\arrow[Rightarrow, from=3-1, to=2-3, "\gamma_{f}" description]
	\end{tikzcd}
\end{equation} 	
\end{definition}

\begin{definition}[$2$-categories of pseudofunctors]
Let $\mathcal C$ and $\mathcal D$ be $2$-categories.  
We write
\[
[\mathcal C,\mathcal D]_{\mathrm{oplax}}
\]
for the $2$-category of pseudofunctors and oplax natural transformations, defined as follows:

	\begin{enumerate}[label=\textbf{(\alph*)}, leftmargin=2em, itemsep=0.3em]
		\item \emph{objects:} pseudofunctors $L\colon \mathcal C \to \mathcal D$;
		\item \emph{$1$-cells:} in $[\mathcal C,\mathcal D]_{\mathrm{oplax}}$, the oplax natural transformations 
		\(\theta\colon L\Rightarrow M\);
		\item \emph{$2$-cells:} modifications $\xi\colon \theta \Rrightarrow \theta'$ between (op)lax natural transformations.
	\end{enumerate}
	
	Composition and identities are defined objectwise.  
	For oplax transformations $\theta\colon L\Rightarrow M$ and $\phi\colon M\Rightarrow N$, their composite
	\(\phi\circ\theta\colon L\Rightarrow N\) is given by
\begin{equation} 
	(\phi\circ\theta)_X \defeq \phi_X\circ \theta_X,
	\qquad
	(\phi\circ\theta)_f \defeq
	(\phi_Y \circ \theta_f)\,\cdot\,(\phi_f \circ L(f)),
\end{equation} 
	where $\circ$ denotes horizontal composition and $\cdot$ vertical composition of $2$-cells in~$\mathcal D$.
	
\textit{	The $2$-category $[\mathcal C,\mathcal D]_{\mathrm{lax}}$ of pseudofunctors and lax natural transformations is defined by}
	\[
	[\mathcal C,\mathcal D]_{\mathrm{lax}}
	\defeq
	[\mathcal C^{\mathrm{co}},\mathcal D^{\mathrm{co}}]_{\mathrm{oplax}}.
	\]
	Finally, the $2$-category of pseudofunctors and pseudonatural transformations 	\[
	[\catC,\catD]_{\mathrm{PS}}
	\] is the wide and locally full sub-$2$-category of
$	[\mathcal C^{\mathrm{co}},\mathcal D^{\mathrm{co}}]_{\mathrm{oplax}}$
	whose $1$-cells are the pseudonatural transformations.
\end{definition}

\begin{remark}[Convention and Terminology]
	The terminology distinguishing the directions of the $2$-cells in \emph{lax} and \emph{oplax} natural transformations is not entirely uniform across the literature.  
	Here we adopt the convention that appears to be the most prevalent in two-dimensional universal algebra, namely, the one aligning with the standard treatment of pseudomonads and their pseudoalgebras; see, for example, \cite{zbMATH04105188, zbMATH06284339, 2018arXiv180201767L, zbMATH06970806}.
	
	In this setting, given a pseudomonad, one considers the $2$-categories of pseudoalgebras and their morphisms, pseudo, lax, and oplax.  
	Let $\catC$ and $\catD$ be $2$-categories, and let $\mathsf{disc}(\catC)$ denote the wide discrete sub-$2$-category of $\catC$.  
	If $\catD$ admits sufficient (weighted) bilimits (see, e.g., \cite{MR3491845, zbMATH06881682}), then the restriction functor
	\begin{equation}
		[\catC,\catD]_{\mathrm{PS}}
		\;\longrightarrow\;
		[\mathsf{disc}(\catC),\catD]_{\mathrm{PS}}
	\end{equation}
	is \emph{pseudomonadic}; see, for instance, \cite{zbMATH04105188, zbMATH01787496} or \cite[Sections~7 and~9]{MR3491845} for details.
	
	From this perspective, the $2$-category $[\catC,\catD]_{\mathrm{PS}}$ is (bi)equivalent to the $2$-category of pseudoalgebras and pseudomorphisms, while $[\catC,\catD]_{\mathrm{oplax}}$ (as defined above) corresponds to the $2$-category of pseudoalgebras and oplax morphisms.  
	Dually, $[\catC,\catD]_{\mathrm{lax}}$ corresponds to that of pseudoalgebras and lax morphisms; see again \cite{zbMATH04105188, zbMATH06284339, 2018arXiv180201767L, zbMATH06970806}.
		This convention is therefore fully consistent with the one adopted throughout the present work.
\end{remark}

We now recall the definition of an (op)lax (co)limit.  
There is an extensive and rich literature in two-dimensional limits, including interesting aspects on its variants. For basic aspects of two-dimensional limits, we refer, for instance, to 
\cite{zbMATH03447118, zbMATH05659661, zbMATH06881682, zbMATH07629358,  zbMATH03523837, zbMATH03680046, zbMATH04008629}.  
When $2$-dimensional strict (co)limits exist, (op)lax limits and colimits can be constructed from them;  
for coherence results relating (op)lax (co)limits to other forms of (co)limits, we refer to 
\cite{zbMATH04125649, zbMATH05994242, MR3491845, zbMATH06970806}.  
In our setting, however, it is more convenient to study oplax (co)limits without relying extensively on these results.  
We recall the basic definition below.

\begin{definition}[(Op)lax (Co)limits]
	Let $\catS$ and $\catC$ be $2$-categories.  
	Given pseudofunctors $D \colon \catS \to \catC$ and $\mathcal{W} \colon \catS \to \CAT$, the \emph{oplax limit of $D$ with weight $\mathcal{W}$}, if it exists, is an object
	\[
	\mathrm{oplax}\lim(\mathcal{W}, D)
	\]
	of $\catC$ such that there is an equivalence
	\begin{equation}
		\label{eq:oplax-limit}
		\catC\left(X,\, \mathrm{oplax}\lim(\mathcal{W}, D)\right)
		\;\simeq\;
		[\catC,\CAT ]_{\mathrm{oplax}}\left(\mathcal{W},\, \catC(X, D-)\right)
	\end{equation}
	that is pseudonatural in $X \in \catC$.
	In this case, we say that $\mathrm{oplax}\lim(\mathcal{W}, D)$ is the \emph{oplax $\mathcal{W}$-limit of $D$}.
	
\noindent
	Dually, given a pseudofunctor $\mathfrak{W} \colon \catS^{\op} \to \CAT$, the \emph{oplax $\mathfrak{W}$-colimit of $D \colon \catS \to \catC$}, if it exists, is an object
	\[
	\mathrm{oplax}\colim(\mathfrak{W}, D)
	\]
	of $\catC$ such that there is an equivalence
	\begin{equation}
		\label{eq:oplax-colimit}
		\catC\left(\mathrm{oplax}\colim(\mathfrak{W}, D),\, X\right)
		\;\simeq\;
		[\catC ^\op ,\CAT ]_{\mathrm{oplax}}\left(\mathfrak{W},\, \catC(D-, X)\right)
	\end{equation}
	that is pseudonatural in $X \in \catC$.

\noindent	
Codually, we define the \emph{lax $\mathcal{W}$-limit} and \emph{lax $\mathfrak{W}$-colimit} of a $2$-functor $D$
by the following.
\small
\begin{equation}
\mathrm{lax}\lim(\mathcal{W}, D)\ \defeq\ \mathrm{oplax}\lim\big(\mathcal{W}^{\mathrm{co}},\, D^{\mathrm{co}}\big),
\qquad
\mathrm{lax}\colim(\mathfrak{W}, D)\ \defeq\ \mathrm{oplax}\colim\big(\mathfrak{W}^{\mathrm{co}},\, D^{\mathrm{co}}\big).
\end{equation}

\end{definition}
\normalsize
Analogously, one can define bilimits and strict $2$-dimensional limits. 
For instance, given pseudofunctors
\[
D \colon \catS \to \catC, 
\qquad 
\mathfrak{W} \colon \catS^{\op} \to \CAT, 
\qquad 
\mathcal{W} \colon \catS \to \CAT,
\]
if they exist, the $\mathcal{W}$\emph{-bilimit} $\mathrm{bi}\lim(\mathcal{W}, D)$ 
and the $\mathfrak{W}$\emph{-bicolimit} $\mathrm{bi}\colim(\mathfrak{W}, D)$ 
are objects of $\catC$ such that there are equivalences
\small
\begin{equation}
	\label{eq:bilimit-limit}
	\catC\left(X,\, \mathrm{bi}\lim(\mathcal{W}, D)\right)
	\;\simeq\;
	[\catC, \CAT]_{\mathrm{PS}}\left(\mathcal{W},\, \catC(X, D-)\right),
\end{equation} 
\begin{equation} 	\label{eq:bicolimit-limit}
	\catC\left(\mathrm{bi}\colim(\mathfrak{W}, D),\, X\right)
	\;\simeq\;
	[\catC^{\op}, \CAT]_{\mathrm{PS}}\left(\mathfrak{W},\, \catC(D-, X)\right),
\end{equation}
\normalsize
which are pseudonatural in $X \in \catC$.

\medskip

In what follows, we shall mainly work with \emph{conical} (oplax/lax) (co)limits.  
A weighted (oplax/lax) (co)limit is said to be \emph{conical} when its weight is the constant pseudofunctor at the terminal category in $\CAT$.  
More precisely:

\begin{definition}[Conical (op)lax (co)limits]
	Let $D \colon \catS \to \catC$ be a pseudofunctor.  
	The \emph{conical oplax colimit} $\colim_{\mathrm{oplax}} D$ 
	and the \emph{conical oplax limit} $\lim_{\mathrm{oplax}} D$, if they exist, are respectively defined by
	\begin{equation}
		\mathrm{oplax}\colim D \defeq \mathrm{oplax}\colim(\terminal, D),
		\qquad
		\mathrm{oplax}\lim D \defeq \mathrm{oplax}\lim(\terminal, D),
	\end{equation}
	where, by abuse of notation, $\terminal$ denotes the constant weight at the terminal category $\terminal \in \CAT$.  
	Codually, the \emph{conical lax (co)limits} are defined by
	\begin{equation}
		\mathrm{lax}\colim D \defeq \mathrm{lax}\colim(\terminal, D),
		\qquad
		\mathrm{lax}\lim D \defeq \mathrm{lax}\lim(\terminal, D).
	\end{equation}
\end{definition}

\noindent
Analogously, the \emph{conical bicolimit} $\mathrm{bi}\colim D$ 
and the \emph{conical bilimit} $\mathrm{bi}\lim D$ 
are respectively defined by
\begin{equation}
	\mathrm{bi}\colim D \defeq \mathrm{bi}\colim(\terminal, D),
	\qquad
	\mathrm{bi}\lim D \defeq \mathrm{bi}\lim(\terminal, D).
\end{equation}

If the reader is familiar with the usual definition of (ordinary) conical limits, as presented for instance in \cite{zbMATH03367095},  
the following formulation of conical oplax (co)limits, expressed in terms of oplax cones, may be appreciated.

\begin{lemma}[Conical oplax (co)limits]
	Let $D \colon \catS \to \catC$ be a pseudofunctor.  
	We define
	\[
	\Delta \colon \catC \to [\catS, \catC]_{\mathrm{oplax}}
	\]
	to be the $2$-functor sending each object $X \in \catC$ to the constant $2$-functor at $X$.
	
	The conical oplax colimit of $D$ exists and is equivalent to an object $\mathfrak{C}$ whenever there is an equivalence
	\[
	\catC(\mathfrak{C}, X) \;\simeq\; [\catS, \catC]_{\mathrm{oplax}}(D, \Delta X)
	\]
	pseudonatural in $X$.  
	Dually, the conical oplax limit of $D$ exists and is equivalent to an object $\mathfrak{L}$ whenever there is an equivalence
	\[
	\catC(X, \mathfrak{L}) \;\simeq\; [\catS, \catC]_{\mathrm{oplax}}( \Delta X, D)
	\]
	pseudonatural in $X$.
\end{lemma}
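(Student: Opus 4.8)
The plan is to deduce the lemma from a purely formal unwinding of the weighted universal property that \emph{defines} the conical oplax (co)limit, namely $\mathrm{oplax}\colim D \defeq \mathrm{oplax}\colim(\terminal, D)$ and $\mathrm{oplax}\lim D \defeq \mathrm{oplax}\lim(\terminal, D)$. Treating the colimit case first, I would fix an object $X \in \catC$ and construct an isomorphism of categories
\[
\Phi_X \colon [\catS^{\op}, \CAT]_{\mathrm{oplax}}\bigl(\terminal,\, \catC(D-, X)\bigr) \;\xrightarrow{\ \cong\ }\; [\catS, \catC]_{\mathrm{oplax}}\bigl(D,\, \Delta X\bigr),
\]
and then check that $\Phi_X$ is strictly $2$-natural, hence in particular pseudonatural, in $X$. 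Granting this, a single argument settles both halves of the lemma: composing the equivalence that \emph{defines} $\mathrm{oplax}\colim(\terminal, D)$, i.e.\ $\catC(\mathrm{oplax}\colim(\terminal, D), X) \simeq [\catS^{\op}, \CAT]_{\mathrm{oplax}}(\terminal, \catC(D-, X))$, with $\Phi_X$ produces the claimed pseudonatural equivalence $\catC(\mathrm{oplax}\colim D, X) \simeq [\catS, \catC]_{\mathrm{oplax}}(D, \Delta X)$; and conversely, given any $\mathfrak{C}$ equipped with a pseudonatural equivalence $\catC(\mathfrak{C}, X) \simeq [\catS, \catC]_{\mathrm{oplax}}(D, \Delta X)$, composing with $\Phi_X^{-1}$ recovers exactly the defining property of $\mathrm{oplax}\colim(\terminal, D)$, so the oplax colimit exists and $\mathfrak{C} \simeq \mathrm{oplax}\colim D$ by the (bicategorical) Yoneda lemma, a representing object being determined up to equivalence.

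To build $\Phi_X$, I would exploit that $\terminal \in \CAT$ is terminal: a functor $\terminal \to \catC(Ds, X)$ is the same as an object of $\catC(Ds, X)$, that is, a $1$-cell $\vartheta_s \colon Ds \to X$, and a natural transformation between two such functors is a single $2$-cell of $\catC$. Unwinding the data of an oplax natural transformation $\theta \colon \terminal \Rightarrow \catC(D-, X)$ through these identifications, it reduces to a family $(\vartheta_s \colon Ds \to X)_{s \in \catS}$ together with, for each $1$-cell of $\catS$, a $2$-cell of $\catC$ --- precisely the underlying data of an oplax natural transformation $D \Rightarrow \Delta X$, once variances are matched, keeping in mind that $\catC(D-, X) \colon \catS^{\op} \to \CAT$ reverses $1$-cells. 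On morphisms $\Phi_X$ uses the parallel identification of modifications: a modification between oplax transformations out of $\terminal$ amounts, at each $s$, to a single $2$-cell of $\catC$, which is exactly the data of a morphism of cocones. The substance of the lemma is then the verification that, under $\Phi_X$, the three coherence axioms \eqref{eq:identity-oplaxnatural}, \eqref{eq:associativity-oplaxnatural} and \eqref{eq:natural-oplaxnatural} for $\theta$ translate into the corresponding axioms for the associated cocone $D \Rightarrow \Delta X$, and similarly \eqref{eq:natural-oplaxnatural-modification} for morphisms.

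The hard part will be precisely this coherence bookkeeping, since the non-identity structure $2$-cells lie on different functors on the two sides. On the cocone side $\Delta X$ is a genuine $2$-functor, so its unitor and compositors are identities and only the unitor $\llll_X$ and compositors $\llll_{gf}$ of $D$ survive in \eqref{eq:identity-oplaxnatural} and \eqref{eq:associativity-oplaxnatural}; on the weighted side $\terminal$ is a $2$-functor but $\catC(D-, X)$ is merely a pseudofunctor, its compositors being obtained by applying $\catC(-, X)$ to those of $D$. One must therefore check that $\catC(\llll_{gf}, X)$, transported through the terminality of $\terminal$ and the identification of $2$-cells of $\catC(Ds, X)$ with $2$-cells of $\catC$, reproduces exactly the occurrence of $\llll_{gf}$ in \eqref{eq:associativity-oplaxnatural} (and similarly for the unitor), and, for \eqref{eq:natural-oplaxnatural}, that the action of $\catC(-, X)$ on a $2$-cell $D(\alpha)$ --- covariant, as $\catC(-, X)$ reverses only $1$-cells --- corresponds to whiskering by $D(\alpha)$. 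Equal care is needed with the $(-)^{\op}$ on $\catS$ and the $(-)^{\mathrm{co}}$ implicit in the oplax/lax conventions, as these fix the directions of the $2$-cells being matched; getting every one of these orientations right is where the real work lies. Finally, the limit case is entirely parallel: the same terminality argument applied to $\catC(X, D-) \colon \catS \to \CAT$ yields an isomorphism $[\catS, \CAT]_{\mathrm{oplax}}(\terminal, \catC(X, D-)) \cong [\catS, \catC]_{\mathrm{oplax}}(\Delta X, D)$, $2$-natural in $X$, whence the conclusion by the defining property of $\mathrm{oplax}\lim(\terminal, D)$.
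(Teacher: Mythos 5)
Your overall strategy is the right one; in fact the paper offers no proof of this lemma at all, presenting it as a direct unwinding of the conical instance of the weighted definition, so reducing everything to an isomorphism $\Phi_X$ between the terminal-weighted hom-category and the category of (co)cones, followed by the bicategorical Yoneda lemma, is exactly what is intended. Your limit half goes through as planned: for $\phi\colon a\to b$ in $\catS$, an oplax transformation $\terminal\Rightarrow\catC(X,D-)$ and an oplax cone $\Delta X\Rightarrow D$ both carry a structure $2$-cell of the form $\vartheta_b\Rightarrow D\phi\circ\vartheta_a$, and matching the coherence axioms really is routine there.

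The colimit half, however, which you treat as formally parallel, breaks at precisely the step you defer. Do the orientation bookkeeping: $\phi\colon a\to b$ in $\catS$ is a $1$-cell $b\to a$ of $\catS^{\op}$, so the oplax structure cell of $\theta\colon\terminal\Rightarrow\catC(D-,X)$ at it is a $2$-cell $\theta_a\circ\terminal(\phi)\Rightarrow\catC(D\phi,X)\circ\theta_b$, i.e.\ a $2$-cell $\vartheta_a\Rightarrow\vartheta_b\circ D\phi$ in $\catC$; whereas the structure cell of an oplax cocone $D\Rightarrow\Delta X$ at $\phi$ points the other way, $\vartheta_b\circ D\phi\Rightarrow\vartheta_a$. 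Consequently the $\Phi_X$ you posit identifies $[\catS^{\op},\CAT]_{\mathrm{oplax}}(\terminal,\catC(D-,X))$ with $[\catS,\catC]_{\mathrm{lax}}(D,\Delta X)$ rather than with $[\catS,\catC]_{\mathrm{oplax}}(D,\Delta X)$, and these genuinely differ (already for $\catS$ the walking arrow and $\catC=\CAT$). So the isomorphism you need does not exist under a literal reading of \eqref{eq:oplax-colimit}: for the lemma to be provable, the weighted colimit formula must be read with the lax hom-$2$-category on the right (this reading is forced by the rest of the paper, since Proposition~\ref{prop:Grothendieck-construction-basic} and Theorem~\ref{thm:grothendieck-colim} use the \emph{oplax}-cocone universal property of $\Sigma_\catC\catL$). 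Your write-up should make this correction explicit rather than assume the two halves are dual; the $(-)^{\op}$ on the indexing $2$-category interacts asymmetrically with a fixed choice of ``oplax'', which is exactly why the limit case closes and the colimit case does not.
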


\section{Indexed categories}

The study of indexed categories, fibrations, and Grothendieck constructions has a long and intricate history, marked by several distinct stages of development.  
Foundational ideas were introduced in the seminal works \cite{MR1603475, grothendieck1971revetements} and subsequently systematized in the classical expositions \cite{zbMATH0681654, MR0213413, johnstone2002sketches, zbMATH03512374}, which collectively established the categorical foundations of the subject.  
More recent developments have considerably deepened and broadened these foundations, as illustrated in \cite{zbMATH06810410, zbMATH06881682, nunes2025unravelingiterativechad, nunes2023chad, zbMATH07229468, arXiv:2410.22876, shulman2008framed, zbMATH07005873}.

In this section we recall the principal definitions, review some basic results, and fix the notation and conventions adopted throughout.

\begin{definition}[Indexed Category]
	Let $\catC$ be a category.  
	An \emph{indexed category} consists of a pair $(\catC, \catL)$, where
	\[
	\catL \colon \catC^{\mathrm{op}} \to \CAT
	\]
	is a pseudofunctor, with $\catC$ regarded as a locally discrete $2$-category.  
	Thus an indexed category is determined by the following data:
\begin{enumerate}[label=\textbf{(\alph*)}, leftmargin=2em, itemsep=0.3em]
		\item for each object $A$ of $\catC$, a (possibly large) category $\catL(A)$;
		\item for each morphism $f \colon A \to B$ in $\catC$, a functor $\catL(f) \colon \catL(B) \to \catL(A)$;
		\item natural isomorphisms
		\[
		\eta^A \defeq \catL^A \colon \id_{\catL(A)} \rightarrow \catL(\id_A)
		\quad \text{(unitors)}
		\]
		for each $A \in \catC$, and
		\[
		\mu^{f,g} \defeq \catL^{fg} \colon \catL(f) \circ \catL(g) \rightarrow \catL(g \circ f)
		\quad \text{(compositors)}
		\]
		for all $A \xrightarrow{f} B \xrightarrow{g} C$ in $\catC$;
		\item coherence conditions: for every triple of composable morphisms
		\[
		A\xrightarrow{f} B \xrightarrow{g} C \xrightarrow{h} D
		\]
		in $\catC$, Diag.~\eqref{diag:unitor-indexed-category}   and Diag.~\eqref{diag:compositor-indexed-category}   hold.	
\begin{equation}\label{diag:unitor-indexed-category}  
			\begin{tikzcd}
				&& \catL(f) \arrow[d, equal] \arrow[lld, Rightarrow, "\eta^{A}\ast\id_{\catL(f) }"'] \arrow[rrd, Rightarrow, "\id_{\catL(f) }\ast \eta^{B}"] && \\
				\catL(\id_{A}) \circ \catL(f) \arrow[rr, Rightarrow, "{\mu^{\id_A f}}"'] && \catL(f) && \catL(f) \circ \catL(\id_{B}) \arrow[ll, Rightarrow, "{\mu^{f\id_{B}}}"]
			\end{tikzcd}
\end{equation} 		
\begin{equation}\label{diag:compositor-indexed-category}  
			\begin{tikzcd}
				\catL(f) \circ \catL(g) \circ \catL(h)
				\arrow[rrr, Rightarrow, "{\id_{{}_{\catL(f)}}\ast \mu^{gh}}"]
				\arrow[d, Rightarrow, "{\mu^{fg}\ast \id_{\catL(h)}}"']
				&&&
				\catL(f) \circ \catL(h \circ g) \arrow[d, Rightarrow, "{\mu^{f (h\circ g)}}"]
				\\
				\catL(g \circ f) \circ \catL(h)
				\arrow[rrr, Rightarrow, "{\mu^{(g\circ f)h }}"']
				&&&
				\catL( h\circ g \circ f)
			\end{tikzcd}
\end{equation} 
	\end{enumerate} 

\noindent	
	In this context, we say that $\catL$ is a \emph{$\catC$-indexed category}.  
	If $\catL$ is a $2$-functor (that is, a functor), we say that $\catL$ is a \emph{strict} $\catC$-indexed category.
\end{definition}

We can now define \emph{morphisms of indexed categories}.  
As in the case of morphisms between $2$-categories, several variants may be considered: lax, oplax, pseudo, and strict.  
For our purposes, we regard the \emph{oplax} version as canonical.

\begin{definition}[Morphisms of Indexed Categories]\label{def:Morphisms-Indexed-Categories}
	Let $$(\catC, \catL \colon \catC^{\mathrm{op}} \to \CAT)\quad\mbox{ and }\quad(\catC', \catL' \colon \catC'{}^{\mathrm{op}} \to \CAT)$$ be indexed categories.  
	The following notions of morphisms may be distinguished.
\begin{enumerate}[label=\textbf{(\alph*)}, leftmargin=2em, itemsep=0.3em]
		\item \textit{Lax morphism (lax indexed functor):} a \textit{lax morphism} $$\left( \catC, \catL\right) \to \left( \catC ', \catL'\right)  $$ consists of a pair $(F, \theta)$ where $F \colon \catC \to \catC'$ is a functor and 
		\[
		\theta \colon \catL \longrightarrow \catL' \circ F^{\mathrm{op}}
		\]
		is a lax natural transformation.
		
		\item \textit{Oplax morphism (oplax indexed functor):} an \textit{oplax morphism} $\left( \catC, \catL\right) \to \left( \catC ', \catL'\right)  $ consists of a pair $(F, \gamma)$ where $F \colon \catC \to \catC'$ is a functor and 
		\[
		\gamma \colon \catL \longrightarrow \catL' \circ F^{\mathrm{op}}
		\]
		is an oplax natural transformation.
		
		\item \textit{Pseudomorphism (pseudo-indexed functor):} a \textit{pseudomorphism} $$\left( \catC, \catL\right) \to \left( \catC ', \catL'\right)   $$ consists of an oplax morphism $(F, \gamma): \left( \catC, \catL\right) \to \left( \catC ', \catL'\right) $ in which $\gamma$ is pseudonatural.
		
		\item \textit{Strict morphism (strictly indexed functor):} a \textit{strict morphism} $$\left( \catC, \catL\right) \to \left( \catC ', \catL'\right)   $$ consists of an oplax morphism $(F, \gamma): \left( \catC, \catL\right) \to \left( \catC ', \catL'\right) $ in which $\gamma$ is $2$-natural.

	\end{enumerate} 
\end{definition}

Henceforth, for indexed categories $\catL \colon \catC^{\mathrm{op}} \to \CAT$, we shall, unless stated otherwise, work with \emph{oplax} morphisms.  
In other words, we consider the category whose objects are indexed categories and whose morphisms are oplax.

\begin{definition}[Indexed modification]\label{def:Modification-Indexed-categories}
	Let 
$\left( F, \theta \right),\ \left( G, \gamma \right) \colon 
	\left( \catC, \catL \right) \longrightarrow \left( \catC', \catL' \right)$
	be oplax morphisms of indexed categories.  
	An \emph{indexed modification}
	\[
	\chi \colon \left( F, \theta \right) \Rrightarrow \left( G, \gamma \right)
	\]
	consists of a pair $\chi = \left( \hat{\chi}, \chi \right)$, where:\\
	\begin{minipage}{0.45\textwidth}
		\begin{equation}\label{eq:natural-transformation-modification}
			\hat{\chi} \colon F^{\op} \Rightarrow G^{\op} ,
		\end{equation}
	\end{minipage}
	\hfill
	\begin{minipage}{0.45\textwidth}
		\begin{equation}\label{eq:modification-modification}
			\chi \colon 
			\left( \id_{\catL} \ast \hat{\chi} \right) \cdot \theta 
			\Rrightarrow 
			\gamma ,
		\end{equation}
	\end{minipage}\\ \\
	in which~\eqref{eq:natural-transformation-modification} is a natural transformation
	and~\eqref{eq:modification-modification} is a modification.
\end{definition}

\begin{remark}[$2$-categories]
	Up to size considerations, the foregoing structures assemble into a $2$-category, denoted $\Indexed$, whose objects are indexed categories, whose $1$-cells are oplax morphisms, and whose $2$-cells are modifications. 
	We likewise write $\StrictIndexed$ for the $2$-category of strict indexed categories, strict morphisms, and modifications.
\end{remark}

\begin{remark}[Strict indexed categories]
	In many, though not all, interesting cases, $\catL$ is a strict indexed category.  
	By the classical strictification theorems of Giraud and Bénabou, every indexed category is equivalent to a strict one, in two distinct ways. 
	
	These two ways correspond to the left and right $2$-adjoints to the inclusion $\StrictIndexed\to \Indexed$ of the $2$-category of strict indexed categories into that of all indexed categories.  
	
	This result is a special instance of the general coherence theorem for strict algebras and strict algebras for a $2$-monad. We refer the reader to \cite{zbMATH04105188, zbMATH06154005, zbMATH01839069, MR3491845, zbMATH06970806} for the $2$-(co)monadic approach to coherence.
\end{remark}

Let $(\catC, \catL)$ be an indexed category.  
The \emph{Grothendieck construction}, also referred to as the \emph{$\Sigma$-type} of $\catL$ and denoted by
\[
\Sigma_\catC \catL 
\quad \text{or equivalently} \quad 
\int_\catC \catL,
\]
is the \emph{conical oplax colimit} of $\catL$.  
Dually, the category of sections $\Pi_\catC \catL$ is the \emph{conical oplax limit} of $\catL$.  
Explicitly, we have the following definition.

\begin{definition}[Grothendieck construction]\label{def:Grothendieck-construction}
	Let $\catL \colon \catC^{\mathrm{op}} \to \CAT$ be an $\catC$-indexed category.
	\begin{enumerate}
		\item[(Gr)] The \emph{Grothendieck construction} of $(\catC, \catL)$, denoted
		$\Sigma_\catC \catL$ or $\int_\catC \catL$, is defined by
		\[
		\Sigma_\catC \catL \;\defeq\; \mathrm{oplax}\colim\, \catL.
		\]
		
		\item[(Sect)] The \emph{category of sections} of $(\catC, \catL)$, denoted
		$\Pi_\catC \catL$, is defined by
		\[
		\Pi_\catC \catL \;\defeq\; \mathrm{oplax}\lim\, \catL.
		\]
	\end{enumerate}
\end{definition}

To justify Definition~\ref{def:Grothendieck-construction}, we must first verify that such a conical oplax colimit and limit indeed exist.

\subsection{Grothendieck construction} 
It is well known that the following explicit construction provides the oplax colimit, that is, the Grothendieck construction of an indexed category $\left( \catC , \catL \right)$ as in Definition~\ref{def:Grothendieck-construction}. This is, in fact, the original formulation of the Grothendieck construction, \textit{e.g.} \cite[Def.~1.3.1]{johnstone2002sketches} or \cite{nunes2023chad, zbMATH07229468,   arXiv:2410.22876}.

\begin{proposition}[Explicit Grothendieck construction]]\label{prop:Grothendieck-construction-basic} 
	Let $(\catC, \catL)$ be an indexed category.  
	The Grothendieck construction $\Sigma_\catC \catL$ maybe equivalently described as follows.
	\begin{enumerate}
		\item[(O)] \emph{Objects:} dependent pairs $(A, X)$ consisting of an object $A$ of $\catC$ and an object $X$ of $\catL(A)$, that is,
		\[
		\mathrm{Ob}(\Sigma_\catC \catL) \;\defeq\; \sum_{A \in \mathrm{Ob}(\catC)} \mathrm{Ob}(\catL(A)).
		\]
		
		\item[(M)]  \emph{Morphisms:} for $(A, X)$ and $(B, Y)$, a morphism 
		\[
		(A, X) \longrightarrow (B, Y)
		\]
		is a dependent pair $(f, f')$ where $f \colon A \to B$ in $\catC$ and $f' \colon X \to \catL(f)(Y)$ in $\catL(A)$; equivalently,
		\[
		\Sigma_\catC \catL((A, X), (B, Y)) 
		\;\defeq\; 
		\sum_{f \in \catC(A, B)} \catL(A)\big(X, \catL(f)(Y)\big).
		\]

	\item[(C)]  \emph{Composition:} given morphisms
	\[
	(A, X) \xrightarrow{(f, f')} (B, Y) \xrightarrow{(g, g')} (C, Z),
	\]
	their composite is defined by
	\[
	(g, g') \circ (f, f') 
	\;\defeq\; 
	(g \circ f,\, \catL^{f g}_Z \circ \catL(f)(g') \circ f').
	\]
	\end{enumerate}
\end{proposition}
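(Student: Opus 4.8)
The plan is to verify directly that the data (O)--(C) assemble into a well-defined category, which I will call $\Sigma$, and then to identify $\Sigma$ with $\mathrm{oplax}\colim\catL$ by establishing the universal property of the conical oplax colimit in the form given by the Lemma on conical oplax (co)limits: namely, an equivalence of categories
\[
\CAT\left(\Sigma,\, \catD\right) \;\simeq\; [\catC^{\op}, \CAT]_{\mathrm{oplax}}\left(\catL,\, \Delta\catD\right),
\]
pseudonatural in $\catD\in\CAT$, where $\Delta\catD$ denotes the constant pseudofunctor at $\catD$ (here we take $\catS = \catC^{\op}$ in that Lemma, since $\catL$ is indexed over $\catC$). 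In fact I expect to obtain a natural \emph{isomorphism} of categories, which is more than enough.

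\textbf{Well-definedness and the canonical cocone.} First I would take the identity of $(A,X)$ to be $(\id_A,\eta^A_X)$, using the unitor component $\eta^A_X\colon X\to\catL(\id_A)(X)$, and check that it is a two-sided unit for the composition (C); this reduces to the unitor coherence Diagram~\eqref{diag:unitor-indexed-category} together with naturality of the $\eta^A$. Associativity of (C) --- which threads two compositors $\catL^{fg}$ through the fibre maps --- reduces to the compositor coherence Diagram~\eqref{diag:compositor-indexed-category} and the naturality of the compositors. Next I would produce the canonical oplax cocone: for each $A$, the inclusion functor $\iota_A\colon\catL(A)\to\Sigma$, $X\mapsto(A,X)$, $(x\colon X\to X')\mapsto(\id_A,\eta^A_{X'}\circ x)$, which is functorial again by~\eqref{diag:unitor-indexed-category}; and for each $g\colon A\to B$ in $\catC$ the natural transformation $\iota_g\colon\iota_A\circ\catL(g)\Rightarrow\iota_B$ whose component at $Y\in\catL(B)$ is the morphism $(g,\id_{\catL(g)(Y)})\colon(A,\catL(g)(Y))\to(B,Y)$. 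The three oplax-naturality axioms~\eqref{eq:identity-oplaxnatural},~\eqref{eq:associativity-oplaxnatural},~\eqref{eq:natural-oplaxnatural} for the pair $(\iota_A,\iota_g)$ then follow by unwinding the explicit formulas for composition and identities in $\Sigma$.

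\textbf{The universal property.} Post-composition with this cocone sends a functor $H\colon\Sigma\to\catD$ to the oplax natural transformation with components $H\iota_A$ and structure $2$-cells $H\iota_g$. In the other direction, given an oplax natural transformation $\theta\colon\catL\Rightarrow\Delta\catD$ --- consisting of functors $\theta_A\colon\catL(A)\to\catD$ and, for each $f\colon A\to B$ in $\catC$, a natural transformation $\theta_f\colon\theta_A\circ\catL(f)\Rightarrow\theta_B$ --- I would define $\widehat\theta\colon\Sigma\to\catD$ by $\widehat\theta(A,X)\defeq\theta_A(X)$ on objects and, on a morphism $(f,f')\colon(A,X)\to(B,Y)$ with $f'\colon X\to\catL(f)(Y)$, by
\[
\widehat\theta(f,f') \;\defeq\; (\theta_f)_Y\circ\theta_A(f').
\]
Functoriality of $\widehat\theta$ is exactly where the axioms of $\theta$ and the pseudofunctoriality of $\catL$ are used: preservation of identities uses~\eqref{eq:identity-oplaxnatural} and~\eqref{diag:unitor-indexed-category}, while preservation of composites uses~\eqref{eq:associativity-oplaxnatural}, naturality of the $\theta_f$, and~\eqref{diag:compositor-indexed-category}. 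One then checks that $H\mapsto(H\iota_\bullet,H\iota_\bullet)$ and $\theta\mapsto\widehat\theta$ are mutually inverse (the round trip $\widehat{(H\iota,H\iota)}=H$ uses precisely the cancellation $\catL^{\id_A f}_Y\circ\eta^A_{\catL(f)(Y)}=\id$, i.e.\ again~\eqref{diag:unitor-indexed-category}), and extends the correspondence to $2$-cells: a natural transformation $\alpha\colon H\Rightarrow H'$ corresponds to the modification with component $\alpha\ast\iota_A$ at $A$, and conversely. This gives an isomorphism of categories, strictly natural in $\catD$ since every construction is a whiskering with the fixed cocone; by the Lemma on conical oplax (co)limits, $\Sigma\simeq\mathrm{oplax}\colim\catL = \Sigma_\catC\catL$.

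\textbf{Main obstacle.} The only nontrivial point is the verification that $\widehat\theta$ is a functor, equivalently the bookkeeping behind the associativity of (C). Conceptually this is forced --- the composition formula in (C) is exactly what makes $\widehat\theta$ functorial for every $\theta$, in particular for $\theta$ the canonical cocone into $\Sigma$ itself --- but carrying it out requires keeping careful track of variances and of the precise placement of the compositors $\catL^{fg}$, the unitors $\eta^A$, and the oplax structure maps $\theta_f$. Everything else is formal.
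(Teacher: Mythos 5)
Your proposal is correct and is exactly the standard argument: the paper itself offers no proof of this proposition, simply asserting it as well known and citing \cite{johnstone2002sketches}, and what you supply is the direct verification of the universal property $\CAT(\Sigma,\catD)\cong[\catC^{\op},\CAT]_{\mathrm{oplax}}(\catL,\Delta\catD)$ via the canonical cocone $(\iota_A,\iota_g)$, which is precisely the argument the paper implicitly relies on. The only (harmless) imprecision is that functoriality of $\widehat\theta$ on composites needs only \eqref{eq:associativity-oplaxnatural} and naturality of the $\theta_f$; the compositor coherence \eqref{diag:compositor-indexed-category} is used earlier, for associativity of the composition law (C) itself.
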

\noindent
In the setting above, the identities of the Grothendieck construction $\Sigma_\catC \catL$  of an indexed category $(\catC, \catL)$ are given by:
\begin{enumerate} 
\item[(I)]  for each object $(A, X)$ of $\Sigma_\catC \catL$, the morphism
\[
\id_{(A, X)} \;\defeq\; (\id_A,\, \catL^A_X) : (A, X)\to (A, X)
\]
is the identity on $(A, X)$ in $\Sigma_\catC \catL$.
\end{enumerate} 

\begin{remark}[Size of Grothendieck constructions]
	Let $\catL$ be a $\catC$-indexed category.  
	The category $\Sigma_\catC \catL$ is (locally) small, respectively large, whenever $\catC$ and all fibres $\catL(C)$ are (locally) small, respectively large.
\end{remark}

It remains to establish that the \textit{category of sections}, 
as defined in Definition~\ref{def:Grothendieck-construction}, 
also exists. We proceed to do so below.

\subsection{Categories of sections}\label{subsect:categ-sections}

Let $\catL \colon \catC^{\mathrm{op}} \to \CAT$ be a pseudofunctor.  
Its \emph{category of  sections}, also referred to as its \emph{$\Pi$-type}, as defined in Definition~\ref{def:Grothendieck-construction}, is described explicitly below.

\begin{proposition}[Explicit category of sections]
	For an indexed category $\catL \colon \catC^{\mathrm{op}} \to \CAT$, the category of sections
	$\Pi_{\catC}\catL$ may be equivalently described as follows.
	\begin{enumerate}
		\item[(O)] \emph{Objects} are pairs $(X, \xi)$, where $X = (X_C)_{C \in \ob \catC}$ with each $X_C \in \ob \catL(C)$, together with, for every morphism $f \colon A \to B$ in $\catC$, a morphism
		\[
		\xi_f \colon X_A \longrightarrow \catL(f)(X_{B}) \quad \text{in } \catL(A),
		\]
		satisfying, for all $A \xrightarrow{f} B \xrightarrow{g} C$,
		\[
		\xi_{\id_A} = \eta^A_{X_A}
		\qquad\text{and}\qquad
		\xi_{g \circ f} \;=\;
		\mu^{g,f}_{X_{C}} \circ \catL(f)(\xi_{g}) \circ \xi_f,
		\]
		where $\eta$ and $\mu$ are respectively the unitor and compositor of $\catL$.
		\item[(M)] \emph{Morphisms} $\alpha \colon (X, \xi) \to (Y, \zeta)$ are families of morphisms $\alpha_C \colon X_C \to Y_C$ in $\catL(C)$ such that, for every $f \colon A\to B$,
		\[
		\catL(f)(\alpha_{B}) \circ \xi_f \;=\; \zeta_f \circ \alpha_A.
		\]
		\item[(C)] \emph{Composition} is defined componentwise: that is to say, for $$\alpha \colon (X,\xi) \to (Y,\zeta)\quad\mbox{ and }\quad\beta \colon (Y,\zeta) \to (Z,\theta),$$
		\[
		(\beta \circ \alpha)_C \;\defeq\; \beta_C \circ \alpha_C \quad \text{for each } C.
		\]
	\end{enumerate}
\end{proposition}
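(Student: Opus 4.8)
The plan is to show that the category described in the statement---write $\mathcal{P}$ for it---satisfies the universal property of the conical oplax limit recalled in the Lemma on conical oplax $(\mathrm{co})$limits; concretely, to exhibit for each $\catA \in \CAT$ an equivalence
\[
\CAT(\catA,\mathcal{P}) \;\simeq\; [\catC^{\op},\CAT]_{\mathrm{oplax}}(\Delta\catA,\,\catL)
\]
pseudonatural in $\catA$, where $\Delta\catA$ denotes the constant pseudofunctor at $\catA$. In fact I expect to produce an \emph{isomorphism} of categories, $2$-natural in $\catA$, which is stronger than required; by the cited Lemma this identifies $\mathcal{P}$ with $\Pi_\catC\catL = \mathrm{oplax}\lim\catL$. (Intuitively, objects of $\mathcal{P}$ are precisely the functorial sections of the projection $\pi_1 \colon \Sigma_\catC\catL \to \catC$, which is the expected description of the oplax limit of $\catL$.)

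First I would unravel the right-hand side, using that $\catC^{\op}$ is locally discrete. An oplax natural transformation $\theta \colon \Delta\catA \Rightarrow \catL$ then consists of a functor $\theta_C \colon \catA \to \catL(C)$ for each object $C$ of $\catC$, together with, for each morphism $f \colon A \to B$ of $\catC$---i.e.\ each $1$-cell $B \to A$ of $\catC^{\op}$---a natural transformation $\xi_f \colon \theta_A \Rightarrow \catL(f)\circ\theta_B$. Axiom~\eqref{eq:natural-oplaxnatural} is vacuous, since $\catC^{\op}$ carries no non-identity $2$-cells, and substituting $L = \Delta\catA$ (whose unitors and compositors are identities) and $M = \catL$ (with unitor $\eta$ and compositor $\mu$) into~\eqref{eq:identity-oplaxnatural} and~\eqref{eq:associativity-oplaxnatural} collapses these axioms to exactly the pointwise form---evaluated at each $a \in \catA$---of the two equations imposed on objects in item~(O) of the statement. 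Likewise, a modification $\theta \Rrightarrow \theta'$ is a family of natural transformations $\theta_C \Rightarrow \theta'_C$ whose compatibility~\eqref{eq:natural-oplaxnatural-modification} is the pointwise form of the condition on morphisms in item~(M).

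Second I would set up the comparison. Given a functor $G \colon \catA \to \mathcal{P}$, write $G(a) = \bigl((X_C(a))_C,(\xi_f(a))_f\bigr)$; functoriality of $G$ together with the hom-description~(M) of $\mathcal{P}$ makes $a \mapsto X_C(a)$ the action of a functor $\theta_C \colon \catA \to \catL(C)$ and $a \mapsto \xi_f(a)$ a natural transformation $\theta_A \Rightarrow \catL(f)\circ\theta_B$, while the coherence conditions holding in each $G(a)$ are the pointwise oplax axioms above; hence we obtain $\theta \in [\catC^{\op},\CAT]_{\mathrm{oplax}}(\Delta\catA,\catL)$. Conversely an oplax $\theta$ yields a functor $\catA \to \mathcal{P}$ by the same formulas read in reverse (it preserves composition because composition in $\mathcal{P}$ is componentwise, item~(C), and each $\theta_C$ is a functor). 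These two assignments are mutually inverse, and they carry natural transformations $G \Rightarrow G'$ to modifications $\theta \Rrightarrow \theta'$ and back, giving the claimed isomorphism $\CAT(\catA,\mathcal{P}) \cong [\catC^{\op},\CAT]_{\mathrm{oplax}}(\Delta\catA,\catL)$. Its $2$-naturality in $\catA$ is immediate from the pointwise formulas, since precomposition with a functor $\catA' \to \catA$ on the domain side corresponds to whiskering of oplax transformations and of modifications. Invoking the Lemma then finishes the proof.

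I expect the only genuine work to be bookkeeping. One must track the two variances consistently---a $1$-cell $f \colon A \to B$ of $\catC$ is a $1$-cell $B \to A$ of $\catC^{\op}$, and $\catL(f)$ denotes the contravariant action---so that the indices on the compositors and the directions of $\xi_f$ and of the modification components come out matching~\eqref{eq:identity-oplaxnatural}--\eqref{eq:associativity-oplaxnatural} and~\eqref{eq:natural-oplaxnatural-modification} after the constant functor has been plugged in. One must also check the size hypothesis: $\catC$ and each $\catL(C)$ being large forces $\mathcal{P}$ to be large, with hom-objects of the appropriate size, so that the two sides of the displayed equivalence live at the same universe level and the statement is one of (possibly large) categories.
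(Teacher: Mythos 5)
Your proof is correct: since the paper defines $\Pi_{\catC}\catL$ as the conical oplax limit of $\catL$, verifying that the explicitly described category represents $\catA \mapsto [\catC^{\op},\CAT]_{\mathrm{oplax}}(\Delta\catA,\catL)$ is exactly what is required, and your unravelling of the oplax axioms (the constant pseudofunctor's unitors and compositors are identities, naturality is vacuous over a locally discrete base, and equations of natural transformations are checked pointwise) does establish this, with the modification condition likewise collapsing to the morphism condition in (M). The paper states this proposition without proof, treating it as routine, so your argument supplies the expected verification of the universal property rather than taking a different route.
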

\noindent
In the setting above, the identities of the category of sections 	$\Pi_{\catC}\catL$   of an indexed category $(\catC, \catL)$ are given by:
\begin{enumerate} 
	\item[(I)]  for each object $(X, \xi )$ of $\Pi_{\catC}\catL$,  the morphism
	\[
	\id_{(X,\xi)} \;\defeq\; (\id_{X_C})_{C \in \ob \catC} : (X, \xi)\to (X, \xi).
	\]
	is the identity on $(X, \xi )$ in  $\Pi_{\catC}\catL$.
\end{enumerate}

It will be particularly useful in Section~\ref{sect:Limits-Colimits-Grothendieck-Constructions} 
to consider the following alternative explicit construction.

\begin{proposition}[Alternative characterization]
	Let $\catL \colon \catC^{\mathrm{op}} \to \CAT$ be an indexed category.  
	The category of sections $\Pi_{\catC}\catL$ may equivalently be described as the full subcategory
	\[
	\Pi_{\catC}\catL
	\;\subseteq\;
	\CAT(\catC, \Sigma_{\catC}\catL)
	\]
	spanned by those functors $F \colon \catC \to \Sigma_{\catC}\catL$ for which
	$\pi_1 \circ F = \id_{\catC}$.
\end{proposition}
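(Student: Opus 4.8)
The plan is to make the claimed inclusion completely explicit by comparing the explicit description of $\Pi_{\catC}\catL$ given above with the functors $F\colon\catC\to\Sigma_{\catC}\catL$ satisfying $\pi_1\circ F=\id_{\catC}$, and to verify that this comparison is an isomorphism of categories. First, given an object $(X,\xi)$ of $\Pi_{\catC}\catL$ — so $X=(X_C)_{C}$ with $X_C\in\catL(C)$, together with maps $\xi_f\colon X_A\to\catL(f)(X_B)$ for $f\colon A\to B$ — I would define $\widehat{(X,\xi)}\colon\catC\to\Sigma_{\catC}\catL$ by $C\mapsto(C,X_C)$ on objects and $f\mapsto(f,\xi_f)$ on morphisms. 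Preservation of identities is exactly the equation $\xi_{\id_A}=\eta^{A}_{X_A}$, because $\id_{(A,X_A)}=(\id_A,\catL^{A}_{X_A})$ in $\Sigma_{\catC}\catL$ by Proposition~\ref{prop:Grothendieck-construction-basic}; preservation of composition is exactly the equation $\xi_{g\circ f}=\mu^{g,f}_{X_C}\circ\catL(f)(\xi_g)\circ\xi_f$, because $(g,g')\circ(f,f')=(g\circ f,\,\catL^{fg}_{Z}\circ\catL(f)(g')\circ f')$ in $\Sigma_{\catC}\catL$ by the same proposition. Thus the two object-conditions defining $\Pi_{\catC}\catL$ coincide, term by term, with the functoriality conditions for $\widehat{(X,\xi)}$, and $\pi_1\circ\widehat{(X,\xi)}=\id_{\catC}$ holds by construction.

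Conversely, any $F$ with $\pi_1\circ F=\id_{\catC}$ is forced to have the form $F(C)=(C,X_C)$ and $F(f)=(f,\xi_f)$ for uniquely determined data, and reading $F(\id_A)=\id_{F(A)}$ and $F(g\circ f)=F(g)\circ F(f)$ back through the two formulas above returns precisely the two equations required of an object of $\Pi_{\catC}\catL$; so $(X,\xi)\mapsto\widehat{(X,\xi)}$ is a bijection between objects of $\Pi_{\catC}\catL$ and functors $F$ with $\pi_1\circ F=\id_{\catC}$. For morphisms, a map $\alpha\colon(X,\xi)\to(Y,\zeta)$ in $\Pi_{\catC}\catL$ — a family $\alpha_C\colon X_C\to Y_C$ in $\catL(C)$ with $\catL(f)(\alpha_B)\circ\xi_f=\zeta_f\circ\alpha_A$ — I would send to the natural transformation $\widehat\alpha\colon\widehat{(X,\xi)}\Rightarrow\widehat{(Y,\zeta)}$ with components $\widehat\alpha_C\defeq(\id_C,\,\catL^{C}_{Y_C}\circ\alpha_C)\colon(C,X_C)\to(C,Y_C)$. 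Unwinding the naturality square of $\widehat\alpha$ at $f$ inside $\Sigma_{\catC}\catL$ and cancelling the invertible unitors via the coherence diagram~\eqref{diag:unitor-indexed-category} reduces it to exactly the displayed compatibility condition on $\alpha$; in the other direction, a natural transformation $\widehat{(X,\xi)}\Rightarrow\widehat{(Y,\zeta)}$ each of whose components lies over an identity of $\catC$ must have components of the form $(\id_C,\beta_C')$, and $\alpha_C\defeq(\catL^{C}_{Y_C})^{-1}\circ\beta_C'$ then recovers a morphism of $\Pi_{\catC}\catL$. Functoriality of $\alpha\mapsto\widehat\alpha$ and of its inverse is immediate, so the two assignments assemble into the asserted isomorphism between $\Pi_{\catC}\catL$ and the subcategory of $\CAT(\catC,\Sigma_{\catC}\catL)$ determined by these functors and natural transformations.

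The point that needs care is this last, morphism-level step, and specifically the bookkeeping with the unitor $\catL^{C}=\eta^{C}\colon\id_{\catL(C)}\Rightarrow\catL(\id_C)$: a morphism $(C,X_C)\to(C,Y_C)$ of $\Sigma_{\catC}\catL$ lying over $\id_C$ is, by definition, a map $X_C\to\catL(\id_C)(Y_C)$ and not a map $X_C\to Y_C$, so the passage $\alpha_C\leftrightarrow\beta_C'$ and the verification that naturality becomes the $\Pi$-compatibility condition (and respects composition) rest entirely on the invertibility of $\catL^{C}$ together with the unitor coherence~\eqref{diag:unitor-indexed-category}. One should also be precise about which $2$-cells of $\CAT(\catC,\Sigma_{\catC}\catL)$ are meant: the ones corresponding to morphisms of $\Pi_{\catC}\catL$ are the natural transformations $\beta$ with $\pi_1\ast\beta=\id_{\id_{\catC}}$, i.e.\ with every component over an identity of $\catC$ — a vacuous restriction precisely when $\id_{\catC}$ admits no nontrivial central endomorphism. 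With that understood, all remaining checks — the associativity and unit laws in $\Sigma_{\catC}\catL$, and the manipulation of the structural isomorphisms of $\catL$ — are routine diagram chases and present no further difficulty.
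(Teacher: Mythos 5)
The paper states this proposition without proof, so there is no argument of its own to compare against; your proof is the expected one and it is correct. The object-level dictionary $(X,\xi)\leftrightarrow\bigl(C\mapsto(C,X_C),\ f\mapsto(f,\xi_f)\bigr)$ does match the two section axioms verbatim against preservation of identities and composition, via the formulas of Proposition~\ref{prop:Grothendieck-construction-basic}, and your morphism-level translation $\alpha_C\mapsto(\id_C,\,\catL^C_{Y_C}\circ\alpha_C)$, combined with naturality of the unitor and the triangle \eqref{diag:unitor-indexed-category}, correctly collapses the naturality square in $\Sigma_{\catC}\catL$ to the condition $\catL(f)(\alpha_B)\circ\xi_f=\zeta_f\circ\alpha_A$. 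The functoriality of this assignment, which you call immediate, does use the unitor triangle at $f=\id$ once more, but that is a one-line check.

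Your closing caveat is the one substantive point and you are right to raise it: taken literally, ``full subcategory'' is an overstatement of what is true. Naturality only forces the base components of a transformation between two sections to form a central endomorphism of $\id_{\catC}$, not the identity; for instance, if $\catC$ is the one-object groupoid on $\mathbb{Z}$ and $\catL$ is constant at the terminal category, the full subcategory in question is $\catC$ itself, whereas $\Pi_{\catC}\catL$ is terminal. What you actually prove — and what should be asserted — is that $\Pi_{\catC}\catL$ is isomorphic to the in general \emph{non-full} subcategory whose morphisms are the natural transformations $\beta$ with $\pi_1\ast\beta=\id_{\id_{\catC}}$. One small correction to your phrasing: the restriction to such $\beta$ can be vacuous even when $\id_{\catC}$ has nontrivial central endomorphisms (they may simply fail to lift for the given $\catL$), so ``precisely when'' is too strong; but the implication you need, that triviality of the center makes the restriction automatic, is the right one.
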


\section{Grothendieck fibrations}

Let $(\catC, \catL)$ be an indexed category.  
Consider the canonical projection
\begin{equation}\label{eq:Grothendieck-construction-fibration}
	\pi_1 \colon \Sigma_\catC \catL \longrightarrow \catC,
\end{equation}
which assigns to each dependent pair its first component, that is, $\pi_1(f, f') = f$.  
It is well known that the projection \eqref{eq:Grothendieck-construction-fibration} 
is a Grothendieck fibration (see, for instance, Proposition~\ref{prop:cloven-fibration-from-indexed-category} below and Section~\ref{sec:Grothendieck-construction-revisited}, where the biequivalence induced by the Grothendieck construction is recalled in detail).

It is well known that, given an appropriate choice of cleavage, fibrations and indexed categories coincide as notions in the $2$-categorical sense.  
Accordingly, the fibration of~\eqref{eq:Grothendieck-construction-fibration} embodies precisely the same data as the indexed category from which it arises.  
To recall and substantiate this correspondence, we briefly review the pertinent definitions of fibrations for subsequent use.

\begin{definition}[Cartesian lift]\label{def:cartesian-lift}
	Let $P \colon \catE \to \catB$ be a functor, and $f \colon A \to B$ a morphism in $\catB$.  
	
	A \emph{cartesian lift of $f$ with codomain $E \in \catE$} (where $P(E) = B$) is a morphism  
	$e \colon D \to E$ in $\catE$ such that $P(e) = f$, and such that the induced functor  
	\[
	P_E \colon \catE / E \longrightarrow \catB / B,
	\qquad (D', e') \longmapsto (P(D'), P(e')),
	\]
	has the following property: for every object $(D', e')$ of $\catE / E$, the induced function
	\[
	P_E \colon \catE / E \bigl( (D', e'), (D, e) \bigr)
	\longrightarrow
	\catB / B \bigl( (P(D'), P(e')), (A, f) \bigr)
	\]
	is a bijection.  
	Equivalently, $P$ induces a natural bijection
	\[
	P \colon \catE / E \bigl( -, (D, e) \bigr)
	\longrightarrow
	\catB / B \bigl( -, (A, f) \bigr).
	\]
	In this context, we write $e_{f,E}$ to indicate that $e$ is the chosen cartesian lift of $f$ with codomain $E$.
\end{definition}

Let $P \colon \catE \to \catB$ be a functor, $f \colon A \to B$ a morphism in $\catB$, and $E$ an object of $\catE$ with $P(E) = B$.  
By the Yoneda lemma, any two cartesian lifts of $f$ with codomain $E$ are uniquely isomorphic over $B$.  
Thus, while the choice of a cartesian lift is not canonical, it is determined uniquely up to a unique isomorphism.  
The existence of such lifts, however, is an additional condition on $P$, and is not automatic.

\begin{definition}[Grothendieck fibration]\label{def:Grothendieck-fibration}
	A functor $P \colon \catE \to \catB$ is a Grothendieck fibration if, for each $f \colon A \to B$ in $\catB$ and each $E$ with $P(E) = B$, there is a cartesian lift $e_{f,E}$ of $f$ with codomain $E$.
\end{definition}	

\begin{definition}[Cloven fibration and cleavage]\label{def:Grothendieck-fibration-cloven}
	Given a fibration $P \colon \catE \to \catB$, a choice $e_{f,E}$ of representatives of the isomorphism class of cartesian liftings of $f$ with codomain $E$ is called a \emph{cleavage} of $p$.  
	A fibration equipped with a chosen cleavage is called a \emph{cloven fibration}.
\end{definition}

Under a suitable form of the axiom of choice, every Grothendieck fibration may be endowed with a cleavage.  
For completeness, we recall the classical characterization of cloven fibrations.
Recall that a \textit{rari}, \textit{e.g.}~\cite[Definition~1.2]{zbMATH07844805},  for a functor $F$ 
is a right adjoint $G$ with an adjunction $\left( F\dashv G, \varepsilon , \eta \right) $ such that the counit $\varepsilon $ is the identity.

\begin{proposition}
	A functor \( P \colon \catE \to \catB \) is a cloven fibration if and only if, for each object 
	\( E \in \catE \), the induced functor
	\[
	P_E \colon \catE / E \longrightarrow \catB / P(E)
	\]
	admits a (chosen) rari (and hence a fully faithful right adjoint).
		We denote the right adjoint by
	\[
	\widehat{P_E} \colon \catB / P(E) \longrightarrow \catE / E.
	\]
\end{proposition}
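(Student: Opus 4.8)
The plan is to show that the two sides of the equivalence encode literally the same data, by unwinding Definition~\ref{def:cartesian-lift}. Fix $E\in\catE$ and write $B\defeq P(E)$. By Definition~\ref{def:cartesian-lift}, an object $(D,e)$ of $\catE/E$ is a cartesian lift of a morphism $f\colon A\to B$ with codomain $E$ if and only if $P_E(D,e)=(A,f)$ and the map induced by $P$,
\[
\catE/E\bigl((D',e'),(D,e)\bigr)\longrightarrow \catB/B\bigl(P_E(D',e'),(A,f)\bigr),\qquad g\longmapsto P_E(g),
\]
is a bijection natural in $(D',e')\in\catE/E$. By the Yoneda lemma this says exactly that $(D,e)$ represents the presheaf $\catB/B\bigl(P_E(-),(A,f)\bigr)$ on $\catE/E$, with \emph{universal element} the identity $\id_{(A,f)}$.

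Next I would invoke the pointwise criterion for adjoints: a functor $\widehat{P_E}\colon \catB/B\to\catE/E$ is right adjoint to $P_E$ if and only if, for each $(A,f)\in\catB/B$, the object $\widehat{P_E}(A,f)$ represents $\catB/B\bigl(P_E(-),(A,f)\bigr)$ with universal element the counit component $\varepsilon_{(A,f)}\colon P_E\widehat{P_E}(A,f)\to(A,f)$, the remaining structure then being uniquely determined. Comparing with the first paragraph, the additional demand that the adjunction be a rari — that $\varepsilon$ be the identity — translates into $P_E\widehat{P_E}(A,f)=(A,f)$ on the nose together with the universal element being $\id_{(A,f)}$; that is, $\widehat{P_E}(A,f)$ must be a cartesian lift of $f$ with codomain $E$.

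It then remains to spell out the two directions. Given a cleavage $\{e_{f,E}\}$ as in Definition~\ref{def:Grothendieck-fibration-cloven}, I set $\widehat{P_E}(A,f)$ to be $e_{f,E}$ viewed as an object of $\catE/E$; being a cartesian lift, it represents the relevant presheaf with universal element the identity, so the pointwise criterion turns the family into a functor right adjoint to $P_E$ with identity counit — a chosen rari, hence fully faithful since its counit is invertible. Conversely, from a chosen rari $\widehat{P_E}$ for every $E$, I define $e_{f,E}$ to be the structure morphism of $\widehat{P_E}(A,f)\in\catE/E$ for each $f\colon A\to B$ and each $E$ over $B$; the rari condition gives $P_E\widehat{P_E}(A,f)=(A,f)$, whence $P(e_{f,E})=f$, and the adjunction bijection evaluated at the identity yields $\varepsilon_{(A,f)}=\id_{(A,f)}$ as universal element, so $e_{f,E}$ is a cartesian lift of $f$ with codomain $E$ by Definition~\ref{def:cartesian-lift}. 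In particular every morphism of $\catB$ has a cartesian lift to every object above its codomain, so $P$ is a Grothendieck fibration (Definition~\ref{def:Grothendieck-fibration}) and $\{e_{f,E}\}$ is a cleavage for it.

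I do not expect a genuine obstacle: the argument is essentially the dictionary ``representing object with $P$-induced universal element $=$ cartesian lift'', read off directly from Definition~\ref{def:cartesian-lift}. The two places that need care are (i) verifying that the object-level assignment $\widehat{P_E}$ extends to a functor satisfying the triangle identities — the routine pointwise construction of an adjoint, where naturality of the representing isomorphisms in $(D',e')$ supplies the functoriality — and (ii) keeping straight which slice category each morphism lives in, so that ``$\varepsilon=\id$'' is correctly unpacked as ``$P_E\widehat{P_E}$ acts as the identity on each object $(A,f)$ and on its structure map''.
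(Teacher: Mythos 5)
Your proof is correct. Note that the paper itself does not argue this proposition at all — it simply cites Gray \cite[Theorem~2.10]{MR0213413} — so there is no in-paper argument to compare against; your Yoneda/pointwise-adjoint dictionary (cartesian lift of $f$ at $E$ $=$ representation of $\catB/P(E)\bigl(P_E(-),(A,f)\bigr)$ with universal element $\id_{(A,f)}$, and identity counit $\Leftrightarrow$ the chosen representatives lie strictly over their objects) is exactly the standard proof of the cited result, and you handle the one delicate point correctly: the pointwise construction of $\widehat{P_E}$ on morphisms forces $P_E\widehat{P_E}=\id$ on the nose, so the counit is genuinely the identity and the adjunction is a rari, while conversely the rari's adjunction bijection $g\mapsto \varepsilon\circ P_E(g)=P_E(g)$ is literally the bijection in Definition~\ref{def:cartesian-lift}.
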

\begin{proof}
	See, for instance, \cite[Theorem~2.10]{MR0213413}.
\end{proof}

\begin{proposition}[Grothendieck construction]\label{prop:cloven-fibration-from-indexed-category}
	Given an indexed category $\left( \catC , \catL \right) $, the functor given by the first projection 
	\begin{equation}
		\pi_1 :  \Sigma_\catC \catL\to \catC, \qquad  (f,f')\mapsto f , 
	\end{equation}
which assigns to each dependent pair its first component,
is a cloven fibration.
\end{proposition}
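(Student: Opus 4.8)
The plan is to exhibit an explicit cleavage and then verify the cartesian-lift property directly from the description of $\Sigma_\catC\catL$ given in Proposition~\ref{prop:Grothendieck-construction-basic}.

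First I would define the candidate cleavage. Given a morphism $f \colon A \to B$ in $\catC$ and an object $E = (B, Y)$ of $\Sigma_\catC\catL$ lying over $B$, set
\[
e_{f,E} \;\defeq\; \bigl(f,\, \id_{\catL(f)(Y)}\bigr) \colon \bigl(A, \catL(f)(Y)\bigr) \longrightarrow (B, Y).
\]
This is a morphism of $\Sigma_\catC\catL$ by clause~(M) of Proposition~\ref{prop:Grothendieck-construction-basic}, and $\pi_1(e_{f,E}) = f$ by construction.

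Second, I would verify the universal property of Definition~\ref{def:cartesian-lift}. Write $D \defeq \bigl(A, \catL(f)(Y)\bigr)$ and fix an object $(D', e')$ of $\Sigma_\catC\catL / E$, say $D' = (A', X')$ and $e' = (h, h') \colon (A', X') \to (B, Y)$ with $h \colon A' \to B$ in $\catC$ and $h' \colon X' \to \catL(h)(Y)$ in $\catL(A')$. A morphism $(D', e') \to (D, e_{f,E})$ in the slice $\Sigma_\catC\catL / E$ is a morphism $\phi = (\phi_1, \phi_2) \colon (A', X') \to \bigl(A, \catL(f)(Y)\bigr)$ of $\Sigma_\catC\catL$ with $e_{f,E} \circ \phi = e'$. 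Expanding this composite via clause~(C) of Proposition~\ref{prop:Grothendieck-construction-basic} and using $\catL(\phi_1)(\id) = \id$, the condition $e_{f,E} \circ \phi = e'$ unwinds to the conjunction of $f \circ \phi_1 = h$ in $\catC$ and
\[
\catL^{\phi_1 f}_Y \circ \phi_2 = h' \qquad \text{in } \catL(A').
\]
Now $(\pi_1)_E$ sends such a $\phi$ to $\phi_1$, and the first condition says exactly that $\phi_1$ is a morphism $(A', h) \to (A, f)$ in $\catC / B$. Conversely, given any morphism $\phi_1 \colon (A', h) \to (A, f)$ of $\catC / B$, the compositor component
\[
\catL^{\phi_1 f}_Y \colon \catL(\phi_1)\bigl(\catL(f)(Y)\bigr) \longrightarrow \catL(f \circ \phi_1)(Y) = \catL(h)(Y)
\]
is invertible because $\catL$ is a pseudofunctor, so the remaining equation has the unique solution $\phi_2 = \bigl(\catL^{\phi_1 f}_Y\bigr)^{-1} \circ h'$. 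Hence $(\pi_1)_E$ restricts to a bijection between $\Sigma_\catC\catL / E \bigl( (D', e'), (D, e_{f,E}) \bigr)$ and $\catC / B \bigl( (A', h), (A, f) \bigr)$; naturality in $(D', e')$ is immediate from functoriality of $\pi_1$, yielding the natural bijection required by Definition~\ref{def:cartesian-lift}.

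Third, I would conclude that $e_{f,E}$ is a cartesian lift of $f$ with codomain $E$ for all $f$ and $E$; the assignment $(f, E) \mapsto e_{f,E}$ is therefore a cleavage, and $\pi_1$ is a cloven fibration in the sense of Definitions~\ref{def:Grothendieck-fibration} and~\ref{def:Grothendieck-fibration-cloven}. I do not expect a genuine obstacle: the argument is bookkeeping with the composition formula~(C) and the slice category, the one substantive input being the invertibility of the compositors of $\catL$, which is exactly what makes the equation for $\phi_2$ uniquely solvable. Equivalently, one could package the same computation through the rari characterization of cloven fibrations recalled above, with $\widehat{(\pi_1)_E} \colon \catC / \pi_1(E) \to \Sigma_\catC\catL / E$ assigning to each object of $\catC / \pi_1(E)$ the corresponding cartesian lift at $E$; the direct verification is, however, the most transparent route.
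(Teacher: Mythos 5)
Your proposal is correct and matches the paper's own argument: the paper defers this proof to Section~\ref{sec:Grothendieck-construction-revisited}, where inside the proof of Proposition~\ref{prop:2equivalence} it takes the same canonical cleavage $e_{f,(B,Y)}=(f,\id_{\catL(f)(Y)})$ and verifies cartesianness by unwinding the composition formula~(C) and using invertibility of the compositor $\catL^{hf}_Y$ to solve uniquely for the fibre component. Your slice-level bookkeeping is exactly that computation, so there is nothing to add.
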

\begin{proof}
	More details are given in Section \ref{sec:Grothendieck-construction-revisited}. 
\end{proof}

As any two-dimensional notion, there are four dual notions corresponding to fibrations.  
Among these, we focus on the following.

\begin{definition}[Opfibration and bifibration]
	A functor $P \colon \catE \to \catB$ for which  
	$P^{\mathrm{op}} \colon \catE^{\mathrm{op}} \to \catB^{\mathrm{op}}$ is a fibration  
	is called an \emph{opfibration}.  
	A functor that is both a fibration and an opfibration is called a \emph{bifibration}.
\end{definition}

\begin{definition}[Split fibration]
	A cloven fibration $P: \catE \to \catB $ is said to be \emph{split} if its cleavage satisfies $e_{\id[B], E} = \id[E]$ and  
	$e_{g, E} \circ e_{f, E'} = e_{g \circ f, E}$,  
	where $E'$ denotes the domain of $e_{g, E}$.
\end{definition}

\begin{definition}[Morphisms of fibrations]\label{def:morphisms-of-fibrations} 
	Given two fibrations $P \colon \catE \to \catB$ and $P' \colon \catE' \to \catB'$, we distinguish the following notions of morphism:
	\begin{itemize}
		\item \emph{oplax morphisms:} commutative squares of functors
		\[
		\begin{tikzcd}
			\catE \arrow[rr, "F_1"] \arrow[dd, "P"'] && \catE' \arrow[dd, "P'"] \\
			&&\\
			\catB \arrow[rr, "F_0"'] && \catB'
		\end{tikzcd}
		\]
		\item \emph{pseudomorphisms} (sometimes called \emph{fibred functors}): oplax morphisms for which $F_1$ sends cartesian liftings along $P$ of morphisms $f$ in $\catB$ to cartesian liftings along $P'$ of $F_0(r)$.
	\end{itemize}
	If cleavages $e_{-,-}$ and $e'_{-,-}$ are fixed for both $p$ and $p'$, we further define:
	\begin{itemize}
		\item \emph{strict morphisms} (sometimes called \emph{split functors}): pseudomorphisms that respect the chosen cleavages, in the sense that  
		$F_1(e_{f,E}) = e'_{F_0(f), F_1(E)}$.
	\end{itemize}
\end{definition}

\begin{definition}[Fibred natural transformation]
	Given two (oplax) morphisms of fibrations $$(F_1, F_0), (G_1, G_0) \colon (P \colon \catE \to \catB) \to (P' \colon \catE' \to \catB'),$$  
	a \emph{fibred natural transformation} between them is a pair of natural transformations  
	\begin{equation} 
	\left( \alpha_0 \colon F_0 \to G_0 , \alpha_1 \colon F_1 \to G_1 \right)  
	\end{equation} 
	such that Equation \eqref{eq:coherence-fibred-natural-transformation} holds.
\begin{equation}\label{eq:coherence-fibred-natural-transformation}\tag{FibNat}
	\begin{tikzcd}[row sep=large, column sep=huge]
		\catE \arrow[d,swap, "P"] \arrow[rr, "G_1"] &
		&
		\catE ' \arrow[d, "P ' "] \\
		\catB \arrow[rr,swap, "G_0"'] \arrow[rr, bend right=60, "F_0"'] &
		\phantom{A}  &
		\catB '\\
		&\phantom{A}&
		\arrow[Rightarrow, from=3-2, to=2-2, "\alpha _ 0" description]
		\arrow[phantom, Rightarrow, from=2-1, to=1-3, "=" description]
	\end{tikzcd}
	\;=\;
	\begin{tikzcd}[row sep=large, column sep=huge]
		&\phantom{A}& \\ 
		\catE \arrow[rr, swap, bend left=60, "G_1"'] \arrow[d, "P"'] \arrow[rr,swap, "F_1"] & \phantom{A}
		&
		\catE  ' \arrow[d, "P ' "] \\
	\catB \arrow[rr, "F_0"']  &
		&
		\catB ' 
		\arrow[Rightarrow, from=2-2, to=1-2, "\alpha _ 1 " description]
		\arrow[Rightarrow, phantom,  from=3-1, to=2-3, "=" description]
	\end{tikzcd}
\end{equation} 			
	
\end{definition}

This gives rise to the $2$-categories of general, cloven, and split fibrations, 
together with their respective oplax, pseudo, and strict morphisms, 
and the corresponding fibred natural transformations.

\section{Grothendieck construction revisited}\label{sec:Grothendieck-construction-revisited}

Since we shall freely move between cloven fibrations and indexed categories, it is convenient to recall the classical correspondence between the two; namely, we describe below how the Grothendieck construction establishes various biequivalences.

\begin{proposition}[Grothendieck construction]\label{prop:2equivalence}
	The Grothendieck construction induces a \emph{biequivalence} of $2$-categories between
	\begin{enumerate}[label=$\star$]
		\item the $2$-category of indexed categories, together with their oplax morphisms and modifications; and
		\item the $2$-category of cloven fibrations, together with their oplax morphisms and fibred natural transformations.
	\end{enumerate}
	Furthermore, this equivalence restricts in the following ways:
\begin{enumerate}[label=\textbf{(\alph*)}, leftmargin=2em, itemsep=0.3em]
		\item pseudomorphisms of indexed categories correspond to pseudomorphisms of fibrations;
		\item strict morphisms of indexed categories correspond to strict morphisms of cloven fibrations;
		\item strict indexed categories correspond to split fibrations;
\item cloven bifibrations correspond precisely to indexed categories that factor (pseudofunctorially) through the $2$-category $\Cat_{Adj}$ of categories and adjunctions, i.e.\ to indexed categories equipped with chosen left adjoints $\catL_!(f)\dashv\catL(f)$ that satisfy the usual Beck--Chevalley/coherence conditions; see \cite[Lemma~9.1.2]{jacobs1999categorical}.
	\end{enumerate}
\end{proposition}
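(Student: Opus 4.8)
The plan is to prove Proposition~\ref{prop:2equivalence} by exploiting the fact that both sides are presented as explicit constructions, and that the only subtle point is functoriality and the coherence of the comparison $2$-functors. First I would define the two pseudofunctors that realise the biequivalence: in one direction, the \emph{Grothendieck construction} $\Sigma_{\catC}(-)$, sending an indexed category $(\catC,\catL)$ to the cloven fibration $\pi_1\colon \Sigma_\catC\catL\to\catC$ (whose existence as a cloven fibration is Proposition~\ref{prop:cloven-fibration-from-indexed-category}), an oplax morphism $(F,\gamma)$ to the commutative square with lower edge $F$ and upper edge $\Sigma_\catC\catL\to\Sigma_{\catC'}\catL'$ given on objects by $(A,X)\mapsto(FA,\gamma_A X)$ and on morphisms using the components $\gamma_f$, and a modification $\chi=(\hat\chi,\chi)$ to the fibred natural transformation whose base component is $\hat\chi$ and whose total component is built from $\chi$; in the reverse direction, the \emph{fibre functor} construction, sending a cloven fibration $P\colon\catE\to\catB$ to the indexed category $C\mapsto P^{-1}(C)$ with reindexing along $f$ obtained from the chosen cartesian lifts (equivalently, via the fully faithful right adjoints $\widehat{P_E}$ of the preceding proposition), and sending morphisms and $2$-cells of fibrations to the evident oplax morphisms and modifications of indexed categories by restricting to fibres. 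The bulk of the work is checking that each of these assignments respects composition and identities up to the required coherent invertible $2$-cells — and that is exactly the kind of routine but lengthy diagram chase that I would organise carefully rather than spell out in full, noting that the compositor/unitor data for $\Sigma_\catC(-)$ come from the compositors $\catL^{fg}$ and unitors $\catL^A$ of the indexed categories involved.

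Next I would establish that these two pseudofunctors are pseudoinverse. One composite, $P\mapsto \Sigma_\catB(C\mapsto P^{-1}(C))$, is equipped with a canonical comparison morphism of fibrations: on objects it sends a pair $(C,E)$ with $E\in P^{-1}(C)$ to $E$, and on morphisms it uses the universal property of cartesian lifts (Definition~\ref{def:cartesian-lift}) to factor any morphism of $\catE$ through the chosen lift; this comparison is a pseudomorphism of fibrations, it is an equivalence of categories over $\catB$, and naturality in $P$ is checked on fibres. The other composite, $\catL\mapsto (C\mapsto \pi_1^{-1}(C))$ with reindexing from the canonical cleavage of $\pi_1$, is isomorphic to $\catL$: the fibre $\pi_1^{-1}(C)$ is literally $\catL(C)$, the chosen cartesian lift of $f\colon A\to B$ at $(B,Y)$ is $(f,\id_{\catL(f)Y})$, and reindexing along it recovers $\catL(f)$ on the nose, while the unitors and compositors match those of $\catL$ by the composition rule (C) in Proposition~\ref{prop:Grothendieck-construction-basic}; this gives a $2$-natural isomorphism (in fact an identity for a strict indexed category). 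Having both composites equivalent to the identity via pseudonatural equivalences yields the biequivalence.

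Finally I would verify the four restriction clauses~(a)--(d). For (a) and (b), the point is that under the fibre-functor translation, the condition "$\gamma$ is pseudonatural" on an oplax morphism of indexed categories is equivalent, component by component, to the condition "$F_1$ preserves cartesian lifts up to isomorphism" defining a pseudomorphism of fibrations — one reads off $\gamma_f$ as the comparison $2$-cell measuring failure of cartesianness — and "$\gamma$ is $2$-natural" translates to "$F_1$ strictly preserves the chosen cleavage"; these are immediate once the comparison morphisms are unwound. Clause (c), that strict indexed categories correspond to split fibrations, follows because $\catL$ being a $2$-functor means $\catL^A$ and $\catL^{fg}$ are identities, which by rule (C) forces the canonical cleavage of $\pi_1$ to satisfy $e_{\id_B,E}=\id_E$ and $e_{g,E}\circ e_{f,E'}=e_{g\circ f,E}$, i.e.\ splitness, and conversely a split cleavage makes the induced indexed category strict. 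Clause (d) I would obtain by recalling that a fibration is a bifibration precisely when each reindexing functor $\catL(f)$ has a left adjoint satisfying Beck--Chevalley, which is exactly the data of a factorisation through $\Cat_{\mathrm{Adj}}$; here I would simply cite \cite[Lemma~9.1.2]{jacobs1999categorical} for the Beck--Chevalley bookkeeping rather than reprove it. The main obstacle throughout is not conceptual but organisational: keeping the compositor and unitor coherence $2$-cells straight on both sides of the biequivalence so that the two pseudonatural equivalences genuinely satisfy the modification axiom, and I would handle this by working, without loss of generality, with a strictified $\catL$ (justified by the strictification remark above) wherever it lightens the bookkeeping.
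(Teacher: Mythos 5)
Your proposal follows essentially the same route as the paper: the same explicit pair of pseudofunctors (Grothendieck construction with the canonical cleavage versus the fibre construction via chosen cartesian lifts), the same canonical comparison $(C,E)\mapsto E$ and fibrewise comparison for the two composites, and the same treatment of clauses (a)--(d), including reading $\gamma_f$ as the cell measuring failure of cartesian preservation and citing \cite[Lemma~9.1.2]{jacobs1999categorical} for (d). The only small imprecision is that the fibre of $\pi_1$ over $C$ is not literally $\catL(C)$ (its morphisms are maps $X\to\catL(\id[C])(Y)$), so the unit comparison is, as in the paper, a pseudonatural equivalence built from the unitors and compositors rather than a $2$-natural isomorphism on the nose; this does not affect the biequivalence.
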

\begin{proof} 
We construct mutually pseudoinverse $2$-functors between the $2$-category of indexed categories, oplax morphisms, and modifications and the $2$-category of cloven fibrations, oplax morphisms, and fibred natural transformations. 

\smallskip 
\noindent
\emph{From indexed categories to cloven fibrations.}
Let $(\catC,\catL\colon \catC^{op}\to\CAT)$ be a $\catC$-indexed category.
Its Grothendieck construction
\[
\pi_1\colon \Sigma_{\catC}\catL \longrightarrow \catC,\qquad \pi_1(f,f')=f,
\]
is a cloven fibration (Definition~\ref{def:Grothendieck-construction}, Proposition~\ref{prop:Grothendieck-construction-basic}, and  Proposition~\ref{prop:cloven-fibration-from-indexed-category}).

We take the \emph{canonical cleavage} by declaring, for $f\colon A\to B$ in $\catC$ and $(B,Y)\in \Sigma_{\catC}\catL$, \begin{equation}\label{eq:canonical-cleavage} e_{f,(B,Y)}\;\defeq\; (\,f,\ \id_{\catL(f)(Y)}\,)\;:\;(A,\catL(f)(Y))\longrightarrow (B,Y), 
\end{equation} 
and verify that $e_{f,(B,Y)}$ is cartesian in the sense of the universal property in the slice (see Def.~\ref{def:cartesian-lift}): given $g=f\circ h$ and a morphism \( (h,s')\colon (C,Z)\to (A,\catL(f)Y) \) with $s'\colon Z\to \catL(h)(\catL(f)Y)$, the composite
$$ e_{f,(B,Y)}\circ (h,s') = \bigl(f\circ h,\ \catL^{hf}_Y\circ \catL(h)(\id)\circ s'\bigr) = \bigl(g,\ \catL^{hf}_Y\circ s'\bigr). $$
Conversely, given any morphism \((g,t)\colon (C,Z)\to (B,Y)\) with \(g=f\circ h\), there is a unique
\(s'\colon Z\to \catL(h)(\catL(f)Y)\) such that \(t=\catL^{hf}_Y\circ s'\), namely \(s'=(\catL^{hf}_Y)^{-1}\circ t\).
Hence the induced map on slices is bijective, and $e_{f,(B,Y)}$ is cartesian. This proves that $\pi_1$ is a cloven fibration.
\smallskip

Now, let $(F,\gamma)\colon (\catC,\catL)\to (\catC',\catL')$ be an \emph{oplax} morphism of indexed categories in our sense (Definition~\ref{def:Morphisms-Indexed-Categories}).
We write the components of $\gamma $ as \[ \gamma_C\colon \catL(C)\to \catL'(FC) \qquad\text{and}\qquad \gamma_f\colon \gamma_C\circ \catL(f)\Longrightarrow \catL'(Ff)\circ \gamma_{C'} \quad (f\colon C\to C'). \] 
We define a functor \[ \Sigma(F,\gamma)_1\colon \Sigma_{\catC}\catL\longrightarrow \Sigma_{\catC'}\catL' \quad\text{by}\quad \begin{cases} (C,X)\mapsto\ (FC,\ \gamma_C(X)),\\[.3em] \bigl(f,\ u\colon X\to \catL(f)(Y)\bigr)\mapsto \bigl(Ff,\ (\gamma_f)_Y\circ \gamma_C(u)\bigr), \end{cases} \] and set $\Sigma(F,\gamma)_0\defeq F$. Then $\pi_1'\circ \Sigma(F,\gamma)_1 = \Sigma(F,\gamma)_0\circ\pi_1$ by construction, so $(\Sigma(F,\gamma)_1,\Sigma(F,\gamma)_0)$ is an \emph{oplax morphism of fibrations} (commutative square of functors, see Definition \ref{def:morphisms-of-fibrations}). The axioms of a oplax natural transformation $\gamma$ (identity, composition, naturality; see~\eqref{eq:identity-oplaxnatural}, \eqref{eq:associativity-oplaxnatural}, \eqref{eq:natural-oplaxnatural}) translate exactly into functoriality of $\Sigma(F,\gamma)_1$. 

Finally, a(n indexed) modification $\chi\colon (F,\gamma)\Rrightarrow (G,\delta)$ between oplax morphisms of indexed categories (Definition \ref{def:Modification-Indexed-categories}) consists of a natural transformation $\hat{\chi}\colon F^{op}\rightarrow G^{op}$ and a modification 	$\chi \colon 
\left( \id_{\catL} \ast \hat{\chi} \right) \cdot \gamma 
\Rrightarrow 
\delta $. 

It should be noted that the modification
\[
\chi \colon 
\left( \id_{\catL} \ast \hat{\chi} \right) \cdot \gamma 
\Rrightarrow 
\delta
\]
is a family of natural transformations
\[
\left(
\chi_C \colon 
\catL\left( \hat{\chi}_C \right) \cdot \gamma_C 
\longrightarrow 
\delta_C
\right)_{C \in \catC},
\]
satisfying the modification conditions 
(Definition~\ref{def:Modification-2-category}).  
We then denote by
\[
\chi_{C,X} \colon 
\catL\left( \hat{\chi}_C \right)\bigl(\gamma_C(X)\bigr)
\longrightarrow 
\delta_C(X)
\]
the component of the natural transformation $\chi_C$ at $X \in \catL(C)$.

This induces a \emph{fibred} natural transformation \[ \Sigma(\chi)_1\colon \Sigma(F,\gamma)_1\Longrightarrow \Sigma(G,\delta)_1, \qquad \Sigma(\chi)_1{}_{(C,X)}\defeq \bigl(\hat{\chi}_C,\ \chi_{C,X}\bigr), \] over $\Sigma(\chi)_0\defeq \hat{\chi}$, and the fibred naturality square commutes by  Equation~\eqref{eq:natural-oplaxnatural-modification}. Thus we have a $2$-functor \[ \mathbf{Gr}\colon \Indexed\longrightarrow \Fibered^{\mathrm{clov}}, \qquad (\catC,\catL)\longmapsto \bigl(\pi_1\colon \Sigma_{\catC}\catL\to \catC \text{ with the cleavage } \eqref{eq:canonical-cleavage}\bigr). \] 
\smallskip 
\noindent\emph{From cloven fibrations to indexed categories.} Conversely, let $P \colon \catE \to \catB$ be a cloven fibration with a fixed cleavage $e_{f,E}$ (Definitions~\ref{def:Grothendieck-fibration} and \ref{def:Grothendieck-fibration-cloven}). 

We define a pseudofunctor \[ \catL_P\colon \catB^{op}\longrightarrow \CAT \] by 
\[ \catL_P(B)\;\defeq\; \catE_B \quad(\text{the fibre over } B),\qquad \catL_P(f)\;\defeq\; f^{*}\colon \catE_B\to \catE_A, \] where $f^{*}(E)$ is the domain of the chosen cartesian lift $e_{f,E}\colon f^{*}E\to E$ and $f^{*}(u)$ is defined by cartesianness: $e_{f,E'}\circ f^{*}(u)=u\circ e_{f,E}$. The unitors and compositors \[ \eta^B\colon \id_{\catE_B}\Longrightarrow (\id_B)^{*}, \qquad \mu^{fg}\colon f^{*}\circ g^{*}\Longrightarrow (g\circ f)^{*} \] are the unique isomorphisms obtained by comparing the two evident cartesian liftings (by the uniqueness part of the cartesian universal property); they satisfy the pseudofunctor coherence by the same uniqueness. Thus $\catL_P$ is a $\catB$-indexed category. If $(F_1,F_0)$ is an \emph{oplax morphism of fibrations} 
\[ \begin{tikzcd} \catE \ar[r,"F_1"] \ar[d,"P"'] & \catE' \ar[d,"P'"] \\ \catB \ar[r,"F_0"'] & \catB' \end{tikzcd} \qquad (\;P'\circ F_1 = F_0\circ P\;) \] 
between cloven fibrations (no cartesian preservation required), we define \[ (F_0,\gamma)\colon (\catB,\catL_P)\longrightarrow (\catB',\catL_{P'}) \] on objects by $\gamma_B\defeq F_1|_{\catE_B}\colon \catE_B\to \catE'_{F_0(B)}$ and on morphisms by the natural transformations \[ \gamma_f\colon \gamma_A\circ f^{*}\Longrightarrow (F_0 f)^{*}\circ \gamma_B \] whose component at $E\in \catE_B$ is the \emph{unique} morphism in $\catE'$ over $\,\id_{F_0(A)}$ factoring $F_1(e_{f,E})$ through the chosen cartesian lift $e'_{F_0(f),\,F_1(E)}$. \[ e'_{F_0(f),\,F_1(E)}\circ (\gamma_f)_E \;=\; F_1(e_{f,E}) . \] The naturality and oplax axioms for $\gamma$ are immediate from cartesianness and functoriality. If $\xi\colon (F_1,F_0)\rightarrow (G_1,G_0)$ is a \emph{fibred} natural transformation, its components define a modification $\widehat{\xi}\colon (F_0,\gamma)\Rightarrow (G_0,\delta)$ in the evident way. Altogether we obtain a $2$-functor \[ \mathbf{Idx}\colon \Fibered^{\mathrm{clov}}\longrightarrow \Indexed, \qquad (\;P\colon \catE\to \catB\;)\longmapsto (\;\catB,\catL_P\;). \] \smallskip \noindent\emph{The unit and counit of the biequivalence.} First, for any cloven fibration $P\colon \catE\to \catB$, there is a \emph{canonical isomorphism of fibrations} over $\catB$ \[ \varepsilon_P\colon \Sigma_{\catB}\catL_P \xrightarrow{\ \cong\ } \catE \] defined by \[ \varepsilon_P(B,E)\defeq E,\qquad \varepsilon_P\bigl(f,\ u\colon X\to f^{*}Y\bigr)\defeq e_{f,Y}\circ u\;:\; X\to Y. \] Its inverse sends $t\colon X\to Y$ over $f$ to \( (f,\ \widehat{t}\colon X\to f^{*}Y) \), the unique factor through the chosen cartesian lift $e_{f,Y}$ (Definition of cartesian lift and cleavages). It is routine to check that $\varepsilon_P$ is natural in $P$ with respect to oplax morphisms of fibrations, hence a $2$-natural isomorphism \[ \varepsilon\colon \mathbf{Gr}\circ \mathbf{Idx}\Rightarrow \id_{\Fibered^{\mathrm{clov}}}. \] Second, for any indexed category $(\catC,\catL)$, there is a \emph{pseudonatural} equivalence of indexed categories (over $\catC$) \[ \eta_{\catL}\colon \catL \xRightarrow{\ \simeq\ } \catL_{\pi_1} \] whose component at $C\in \catC$ is the equivalence \[ \eta_{\catL,C}\colon \catL(C)\xrightarrow{\ \simeq\ } (\Sigma_{\catC}\catL)_C, \qquad X\longmapsto (C,X),\quad u\colon X\to Y\longmapsto (\id_C,\ \eta^C_Y\circ u), \] with pseudo-inverse $(C,X)\mapsto X$ and the coherence $2$-cells built out of the unitors $\eta^C$ and compositors $\mu^{fg}$ of $\catL$ (Definition of indexed category). Compatibility with reindexing is exactly the definition of $\catL_{\pi_1}$; pseudonaturality follows from the axioms for the unitors/compositors. Thus we obtain a $2$-natural equivalence \[ \eta\colon \id_{\Indexed}\Rightarrow \mathbf{Idx}\circ \mathbf{Gr}. \] Altogether, $(\mathbf{Gr},\mathbf{Idx},\eta,\varepsilon)$ exhibit a biequivalence between the two $2$-categories in the first part of the statement. 
\smallskip 
\noindent
\emph{The listed restrictions.} We now verify the four refinements. 
	
\begin{enumerate} 
\item[(a)] \emph{Pseudomorphisms $\Leftrightarrow$ preservation of cartesian liftings.} If $(F,\gamma)$ is a pseudomorphism of indexed categories, each $\gamma_f$ is invertible. Then $\Sigma(F,\gamma)_1$ sends the canonical cartesian lift $e_{f,(B,Y)}=(f,\id)$ to the arrow \[ \bigl(Ff,\ (\gamma_f)_Y\bigr)\;:\; (FA,\gamma_A X)\longrightarrow (FB,\gamma_B Y), \] which is cartesian over $Ff$ because $(\gamma_f)_Y$ is an isomorphism in the fibre.\footnote{In the Grothendieck construction with the canonical cleavage, a morphism $(f,u)$ is cartesian iff $u$ is an isomorphism; in particular the chosen lift is $(f,\id)$.} Hence $(\Sigma(F,\gamma)_1,\Sigma(F,\gamma)_0)$ preserves cartesian arrows, i.e. it is a \emph{pseudomorphism of fibrations}. Conversely, given a pseudomorphism of fibrations $(F_1,F_0)$, the mate defining $\gamma_f$ above is invertible because $F_1$ preserves cartesian arrows: both $F_1(e_{f,E})$ and the chosen $e'_{F_0(f),\,F_1(E)}$ are cartesian over $F_0(f)$ with the same codomain, so the induced comparison in the fibre is an isomorphism. Hence $(F_0,\gamma)$ is a pseudomorphism of indexed categories. 
		
\item[(b)] \emph{Strict morphisms $\Leftrightarrow$ split (strict) morphisms.} If $(F,\gamma)$ is strict ($2$-natural), then each $\gamma_f$ is an identity; it follows that $\Sigma(F,\gamma)_1$ preserves the chosen cleavage \eqref{eq:canonical-cleavage} \emph{on the nose}, hence is a split (strict) morphism of cloven fibrations. The converse is proved by the same mate argument as in~(a): strict preservation of cartesian morphisms forces each $\gamma_f$ to be an identity, i.e.\ $(F,\gamma)$ is strict.

\item[(c)] \emph{Strict indexed categories $\Leftrightarrow$ split fibrations.} If $\catL$ is strict (a $2$-functor), then the cleavage \eqref{eq:canonical-cleavage} on $\pi_1$ satisfies $e_{\id,(C,X)}=\id$ and $e_{g,(C',Y)}\circ e_{f,(C,X)}=e_{g\circ f,(C',Y)}$ strictly, i.e.\ $\pi_1$ is split. Conversely, if $P$ is split, then the composites of the reindexings $f^*$ assemble strictly to a functor $\catL_P\colon \catB^{op}\to \CAT$. 

\item[(d)] \emph{Bifibrations $\Leftrightarrow$ factorisation through $\Cat_{Adj}$.} By the standard characterisation (proved, e.g., in~\cite[Lemma~9.1.2]{jacobs1999categorical}), a cloven fibration is a bifibration iff each reindexing $f^*$ admits a left adjoint $f_!$. Transporting along the biequivalence, this says exactly that the associated indexed category sends $f$ to a functor with a chosen left adjoint, i.e.\ that it factors through the $2$-category $\Cat_{Adj}$ of categories and adjunctions. 
\end{enumerate} 
This completes the proof of the biequivalence and its listed refinements. 
\end{proof}

\begin{remark}[Lax vs.\ oplax under the Grothendieck correspondence]\label{rem:lax-indexed-vs-fib}
	Proposition~\ref{prop:2equivalence} establishes a biequivalence between
	\emph{oplax} morphisms of indexed categories and \emph{oplax} morphisms of cloven fibrations (i.e.\ strictly commuting squares of functors).  
	By contrast, \emph{lax} morphisms of indexed categories do \emph{not} in general correspond to any obvious notion of morphisms of (cloven) fibrations.
	
	Indeed,
	let $\theta\colon \catL \Rightarrow \catL'\circ F^{\op}$ be a \emph{lax} natural transformation of pseudofunctors $\catC^{\op}\to\CAT$.  
	For each $f\colon C\to C'$ in $\catC$, the laxity constraint has the direction
\[
\theta_f\;:\; 
\catL'(Ff)\circ \theta_{C'} \;\Longrightarrow\; \theta_{C}\circ \catL(f)
\quad\text{(so }(\theta_f)_Y:\ \catL'(Ff)(\theta_{C'}Y)\Rightarrow \theta_C(\catL(f)Y)\text{)}.
\]
	To define a functor $\Sigma_{\catC}\catL \to \Sigma_{\catC'}\catL'$ on morphisms $(f,u)$ in the total category, one needs, functorially in $u\colon X\to \catL(f)Y$, a canonical arrow
	\[
	\theta_C(X) \longrightarrow \catL'(Ff)\bigl(\theta_{C'}(Y)\bigr)
	\]
	in the \emph{opposite} direction to $\theta_f$; without invertibility of $\theta_f$ this is unavailable.  
	Thus, in general a lax indexed morphism does \emph{not} induce a functor between Grothendieck constructions over $F$.
	
	\smallskip
	
	\noindent\emph{Concrete counterexample.}
Take $\catC$ to be the category $0\xrightarrow{\,f\,}1$.  
Let $\catL$ and $\catL'$ have fibres $\catL(0)=\catL(1)=\catL'(0)=\catL'(1)=\Set$, and set the reindexing functors on $f$ to be
\[
\catL(f)=\Delta 1:\Set\to\Set,
\qquad
\catL'(f)=\Delta 2:\Set\to\Set.
\]
Let $F=\id_{\catC}$ and let $\theta$ be the lax transformation with $\theta_0=\theta_1=\id_{\Set}$ and with
\[
\theta_f\colon \catL'(f)\circ\theta_1=\Delta 2\ \Longrightarrow\ \theta_0\circ\catL(f)=\Delta 1
\]
the unique “fold” natural transformation (induced by $2\to 1$), which is \emph{not} invertible.
Consider a morphism $(f, {!}\,)\colon (0,X)\to (1,Y)$ in $\Sigma_{\catC}\catL$ (necessarily $!\colon X\to 1$).  
A total functor determined by $\theta$ would have to assign a morphism
\[
(0,\theta_0 X)=(0,X)\;\longrightarrow\;(1,Y)=(1,\theta_1 Y),
\]
i.e.\ a map
\[
X\longrightarrow \catL'(f)\bigl(\theta_1 Y\bigr)=\Delta 2(Y),
\]
but from the given $u: X\to \Delta 1(Y)$ there is no canonical way to obtain such a map $X\to \Delta 2(Y)$, and the laxity 2\textendash cell $\theta_f$ points in the opposite direction needed to transport $u$.  
Hence no functor $\Sigma_{\catC}\catL\to\Sigma_{\catC}\catL'$ can be induced by this lax $\theta$.
\end{remark}

Henceforth we shall work with indexed categories (equivalently, cloven fibrations) and pseudomorphisms, except where stated otherwise.  
We conclude this section with a note on terminology.

\begin{remark}[Terminology: fibred structure]
	Let $\left( \catC, \catL \right)$ be an indexed category.
	We say that a structure on $\Sigma_\catC \catL$ is \emph{fibred} when the projection
	\begin{equation}\label{eq:gGrothendieck-construction-fibration}
		\pi_1 \colon \Sigma_\catC \catL \longrightarrow \catC
	\end{equation}
	preserves it.
	This usage will be maintained throughout, particularly when the \emph{structure} concerned is that of a monoidal structure, or of limits, colimits, or exponentials.
\end{remark}

\section{Examples of Indexed Categories}

Before turning to structural aspects of the Grothendieck construction, we illustrate the generality of our setting through a series of standard examples.

\begin{example}[Pullback]\label{ex:fibration-pullback}
	Given an indexed category \( \catL \colon \catC^{\mathrm{op}} \to \CAT \) and a (pseudo)functor \( F \colon \catD \to \catC \), composition yields a new indexed category \( \catL \circ F^{\mathrm{op}} \colon \catD^{\mathrm{op}} \to \CAT \).  
	Choosing a cleavage, this corresponds to the familiar fact that for a fibration \( p \colon \catE \to \catC \), the pullback \( F^{*}p \) in \( \CAT \) is again a fibration.
\end{example}

\begin{example}[Composition]\label{ex:fibration-composition}
	The composite of two fibrations is again a fibration.  
	In terms of indexed categories, given \( \catL \colon \catC^{\mathrm{op}} \to \CAT \) and \( \catL' \colon (\Sigma_\catC \catL)^{\mathrm{op}} \to \CAT \),  
	the indexed Grothendieck construction yields
	\[
	(\Sigma_\catL \catL') \colon \catC^{\mathrm{op}} \longrightarrow \CAT
	\]
	with fibre over \( C \) given by \( (\Sigma_\catL \catL')(C) = \Sigma_{\catL(C)} \catL'(C, -) \).
\end{example}

\begin{example}[Dual]
	Postcomposition with \( \mathrm{op} \colon \CAT \to \CAT \) sends any indexed category \( \catL \colon \catC^{\mathrm{op}} \to \CAT \)  
	to its dual \( \catL^{\mathrm{op}} \colon \catC^{\mathrm{op}} \to \CAT \).
\end{example}

\begin{example}[Domain fibration]
	For any category \( \catC \), composition of morphisms defines an indexed category \( \catL \colon \catC^{\mathrm{op}} \to \CAT \) by \( \catL(C) = C / \catC \).  
	The corresponding fibration is the domain functor \( \mathrm{dom} \colon \catC^{\to} \to \catC \).  
	(The dual construction using overcategories yields an opfibration.)
\end{example}

\begin{example}[Codomain fibration]\label{ex:codomain-fibration}
	If \( \catC \) admits pullbacks, the codomain functor \( \mathrm{cod} \colon \catC^{\to} \to \catC \) is a fibration.  
	Using the axiom of choice, one may select a cleavage, obtaining an indexed category whose fibres are the overcategories of \( \catC \).
\end{example}

\begin{definition}[Locally indexed category]
	Following \cite{levy2012call}, a \( \CAT(\catC^{\mathrm{op}}, \Set) \)-enriched category is called a \emph{locally indexed category}.  
	Equivalently, these are indexed categories \( \catL \colon \catC^{\mathrm{op}} \to \CAT \) whose objects are independent of \( C \), and for each \( c \colon C' \to C \), the functor \( \catL(c) \) acts as the identity on objects.
\end{definition}

\begin{example}[\( \catC \)-enriched category]\label{ex:enriched-category}
	Any \( \catC \)-enriched category \( \catD \), for a cartesian monoidal category \( \catC \), is \( \CAT(\catC^{\mathrm{op}}, \Set) \)-enriched via the Yoneda embedding, and hence determines a locally \( \catC \)-indexed category.
\end{example}

\begin{example}[Product self-indexing]\label{ex:locally-indexed}
	Let \( \catC \) be a category with chosen finite products.  
	Define a locally indexed category \( \self(\catC) \colon \catC^{\mathrm{op}} \to \CAT \) by
	\[
	\ob \self(\catC)(C) = \ob \catC, \qquad
	\self(\catC)(C)(C', C'') = \catC(C \times C', C'').
	\]
	For each \( c \colon C' \to C \), the induced functor \( \self(\catC)(C) \to \self(\catC)(C') \) acts as the identity on objects and sends \( f \colon C \times C_1 \to C_2 \) to \( f \circ (c \times \id[C_1]) \).
\end{example}

\begin{example}[Scone]\label{ex:scone}
	Combining Examples~\ref{ex:fibration-pullback} and~\ref{ex:codomain-fibration}, a functor \( F \colon \catD \to \catC \) into a category \( \catC \) with pullbacks induces a \( \catD \)-indexed category \( \catL \) with fibres \( \catL(D) = \catC / F D \).  
	This indexed category is the \emph{Scone} or \emph{Artin gluing} of \( F \).  
	It plays a central rôle in the semantics of programming languages, where logical-relations arguments over a denotational semantics in \( \catD \) are organised as one valued in \( \Sigma_\catD \catL \); see \cite{mitchell1992notes}.
\end{example}

\begin{example}[Lax comma]\label{ex:lax-comma}
	Given a \( 2 \)-category \( \catC \), the \( \CAT \)-enriched Yoneda embedding yields a strict indexed category
	\[
	\twoCAT(-^{\mathrm{op}}, \catC) \colon \Cat^{\mathrm{op}} \longrightarrow \CAT,
	\]
	whose morphisms are oplax natural transformations.  
	The Grothendieck construction $$ \Sigma_\Cat \twoCAT(-^{\mathrm{op}}, \catC)$$ is the \emph{lax comma category} over \( \catC \); see, for instance, \cite{zbMATH07844805, zbMATH07766161, clementino2024lax}.  
	Taking \( \catC = \Cat \) yields the large indexed category  
	\( \twoCAT(-^{\mathrm{op}}, \Cat) \colon \Cat^{\mathrm{op}} \to \CAT \)  
	of small strict indexed categories and oplax natural transformations.
\end{example}

\begin{example}[Families construction]\label{ex:families}
	Precomposing the indexed category of Example~\ref{ex:lax-comma} with the embedding \( \Set \hookrightarrow \Cat \) of sets as discrete categories yields  
	\( \Cat(-, \catC) \colon \Set^{\mathrm{op}} \to \CAT \).  
	Its Grothendieck construction \( \Sigma_\Set \Cat(-, \catC) \) is the familiar category \( \Fam{\catC} \), the free coproduct completion of \( \catC \).
\end{example}

\begin{example}[Subobject fibration]\label{ex:subobject-fibration}
	Assume \( \catC \) has pullbacks.  
	For each \( C \in \catC \), let \( \Sub_{\catC}(C) \) be the poset of subobjects of \( C \) (isomorphism classes of monos into \( C \)), and for \( f \colon C' \to C \), let
	\[
	f^{*} \colon \Sub_{\catC}(C) \longrightarrow \Sub_{\catC}(C')
	\]
	be inverse image along \( f \) (pullback of monos).  
	This defines an indexed category
	\[
	\Sub \colon \catC^{\mathrm{op}} \longrightarrow \CAT, \qquad
	\Sub(C) = \Sub_{\catC}(C), \quad \Sub(f) = f^{*},
	\]
	whose Grothendieck construction
	\( \pi_1 \colon \Sigma_{\catC} \Sub \to \catC \)
	is the \emph{subobject fibration}, a cloven \emph{posetal} fibration (cleaved by chosen pullbacks).  
	If \( \catC \) is \emph{regular} (images exist and are pullback-stable), each \( f^{*} \) admits a left adjoint
	$$
	\Sigma_f \colon \Sub(C') \to \Sub(C)
	$$
	given by (regular) image along \( f \), so that \( \Sub \) is a bifibration; see, e.g.~\cite[§A1.3]{johnstone2002sketches}.
\end{example}

Example~\ref{ex:subobject-fibration} may be generalised by considering the right class \( \catM \) of an orthogonal factorization system \( (\catE, \catM) \) on a category \( \catC \) with pullbacks, in place of monos.  
A related and widely used construction is the \emph{subscone}, which may be viewed as a composite of Examples~\ref{ex:scone} and~\ref{ex:subobject-fibration} (or their generalisation to orthogonal factorization systems); see, for instance, \cite{goubault2002logical}.

\section{Limits and colimits in Grothendieck constructions}
\label{sect:Limits-Colimits-Grothendieck-Constructions} 
We begin our study of the structural properties of Grothendieck constructions with a systematic discussion of limits and colimits.  
While several of the results presented here are part of the folklore, or appear implicitly in the literature, a comprehensive and detailed account seems not to have been written down.  
In particular, we emphasize our introduction of slight generalizations of the notion of \emph{extensive indexed categories} originally formulated in~\cite{nunes2023chad}.

For both technical and practical purposes, we shall make use of the category of sections of indexed categories, including the alternative descriptions presented in Subsection~\ref{subsect:categ-sections}.  
Moreover, we observe the following.

\begin{proposition}
	Given a category $\catE$ and a functor $J \colon \catE \to \Sigma_{\catC}\catL$,  
	there exists a unique decomposition of $J$ as $(J_1, J_2)$, where 
	$J_1 \colon \catE \to \catC$ is a functor and  
	$J_2 \in \Pi_{\catE}(\catL \circ J_1^{\mathrm{op}})$ is a section of $\catL \circ J_1^{\mathrm{op}}$.  
	In particular, there is a canonical isomorphism of categories
	\[
	\CAT(\catE, \Sigma_{\catC}\catL)
	\;\cong\;
	\Sigma_{J_1 \in \CAT(\catE, \catC)}
	\Pi_{\catE}(\catL \circ J_1^{\mathrm{op}}),
	\]
	where the right-hand side is the Grothendieck construction of the indexed category
	\[
	\CAT(\catE, \catC)^{\mathrm{op}} \longrightarrow \CAT,
	\qquad
	J_1 \longmapsto \Pi_{\catE}(\catL \circ J_1^{\mathrm{op}}).
	\]
\end{proposition}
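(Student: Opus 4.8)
The plan is to strip off the base component $J_1 \defeq \pi_1\circ J$ and recognise everything that remains as a section. Conceptually, for a fixed functor $J_1\colon\catE\to\catC$, the functors $J\colon\catE\to\Sigma_\catC\catL$ with $\pi_1\circ J = J_1$ are, by the universal property of the pullback of the fibration $\pi_1$ along $J_1$ (Example~\ref{ex:fibration-pullback}, which identifies that pullback with $\pi_1'\colon\Sigma_\catE(\catL\circ J_1^{\op})\to\catE$), exactly the sections of $\pi_1'$; and these are in turn exactly the objects of $\Pi_\catE(\catL\circ J_1^{\op})$ by the alternative description of the category of sections in Subsection~\ref{subsect:categ-sections}. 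Summing over $J_1\in\ob\CAT(\catE,\catC)$ already yields the bijection on objects of the asserted isomorphism, together with uniqueness of the decomposition: any decomposition $(K_1,K_2)$ of $J$ satisfies $K_1 = \pi_1\circ J = J_1$, whence $K_2 = J_2$ by injectivity of the reassembly map.

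Concretely I would first spell out the decomposition. Write $J(e) = (J_1(e),J_2(e))$ on objects, so that $J_2(e)\in\catL(J_1(e))$, and $J(g) = (J_1(g),J_2(g))$ on a morphism $g\colon e\to e'$, so that $J_2(g)\colon J_2(e)\to\catL(J_1(g))(J_2(e'))$ in $\catL(J_1(e))$. Then $J_1 = \pi_1\circ J$ is a functor, and expanding the functoriality of $J$ through the explicit formulas for identities and composition in $\Sigma_\catC\catL$ (Proposition~\ref{prop:Grothendieck-construction-basic}) shows that the families $(J_2(e))_e$ and $(J_2(g))_g$ satisfy exactly the unit and composition conditions defining an object of $\Pi_\catE(\catL\circ J_1^{\op})$, the unitors and compositors of $\catL$ that intervene being precisely those appearing in $\Sigma_\catC\catL$. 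Conversely a pair $(J_1,J_2)$ with $J_1\in\CAT(\catE,\catC)$ and $J_2\in\Pi_\catE(\catL\circ J_1^{\op})$ reassembles, by the same formulas, into a functor $\catE\to\Sigma_\catC\catL$, the section conditions supplying precisely functoriality; and the two assignments are mutually inverse and $\pi_1$-compatible.

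Next I would handle morphisms. Given $J,K\colon\catE\to\Sigma_\catC\catL$ with decompositions $(J_1,J_2)$ and $(K_1,K_2)$, a natural transformation $\beta\colon J\Rightarrow K$ has components $\beta_e = ((\beta_1)_e,(\beta_2)_e)$ with $(\beta_1)_e\colon J_1(e)\to K_1(e)$ in $\catC$ and $(\beta_2)_e\colon J_2(e)\to\catL((\beta_1)_e)(K_2(e))$ in $\catL(J_1(e))$. The family $\beta_1 = ((\beta_1)_e)_e$, obtained by whiskering $\beta$ with $\pi_1$, is a morphism $J_1\to K_1$ in $\CAT(\catE,\catC)$; and the naturality squares of $\beta$, expanded once more through the composition law of $\Sigma_\catC\catL$, assert precisely that $\beta_2 = ((\beta_2)_e)_e$ is a morphism $J_2\to\beta_1^{*}(K_2)$ in $\Pi_\catE(\catL\circ J_1^{\op})$, where $\beta_1^{*}$ denotes reindexing along $\beta_1$ in the indexed category $J_1\mapsto\Pi_\catE(\catL\circ J_1^{\op})$. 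This is exactly the datum of a morphism $(J_1,J_2)\to(K_1,K_2)$ in $\Sigma_{J_1\in\CAT(\catE,\catC)}\Pi_\catE(\catL\circ J_1^{\op})$, so $J\mapsto(J_1,J_2)$, $\beta\mapsto(\beta_1,\beta_2)$ is a bijection on hom-sets; it manifestly preserves identities, and preservation of composition is a direct comparison of the compositors $\mu$ occurring in the composite of two morphisms of $\Sigma_\catC\catL$ with those occurring in the reindexing $\beta_1^{*}$. This gives the claimed isomorphism of categories.

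The main obstacle is precisely this last piece of bookkeeping with the coherence isomorphisms of $\catL$: one has to check that the unitors and compositors entering the composition law of $\Sigma_\catC\catL$ coincide, on the nose, with those defining the reindexing functors $\beta_1^{*}$ of the indexed category $J_1\mapsto\Pi_\catE(\catL\circ J_1^{\op})$ --- a structure whose very pseudofunctoriality rests on the same coherences, since $\catL((\beta_1)_{-})$ is a pseudonatural transformation $\catL\circ K_1^{\op}\Rightarrow\catL\circ J_1^{\op}$. If one prefers to sidestep an explicit description of that indexed category, an economical alternative is to transport the evident category structure of $\CAT(\catE,\Sigma_\catC\catL)$ across the object bijection and check that the transported structure is the Grothendieck construction on the right.
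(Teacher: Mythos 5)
Your proof is correct, and it supplies exactly the routine unpacking that the paper leaves implicit: the proposition is stated there without proof, being the expected consequence of the explicit description of $\Sigma_{\catC}\catL$ (Proposition~\ref{prop:Grothendieck-construction-basic}), the alternative characterization of $\Pi_{\catE}$ as sections of the pulled-back fibration, and the identification of that pullback with $\Sigma_{\catE}(\catL\circ J_1^{\mathrm{op}})$. Your handling of the morphism level --- naturality of $\beta$ decomposing into naturality of $\beta_1$ plus the section-morphism condition $\beta_2\colon J_2\to\beta_1^{*}(K_2)$, with the compositors of $\catL$ matching those defining the reindexing --- is the right bookkeeping, and your fallback of transporting the structure of $\CAT(\catE,\Sigma_{\catC}\catL)$ along the object bijection is a legitimate way to fix the pseudofunctor structure on $J_1\mapsto\Pi_{\catE}(\catL\circ J_1^{\mathrm{op}})$ coherently.
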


\subsection{Limits in Grothendieck constructions}

We now turn to a general characterization of fibred limits in a Grothendieck construction 
$\pi_1 \colon \Sigma_\catC \catL \to \catC$,  
that is, limits preserved by the projection~$\pi_1$.  
A related formulation appears in~\cite[Theorem~4.2]{MR0213413},  
although the precise statement given here does not seem to appear explicitly in the literature.

\begin{theorem}[Fibred limits in a Grothendieck construction]\label{thm:limits-groth}
	Let $$J = (J_1, J_2) \colon \catE \to \Sigma_\catC \catL$$ be a functor.  
	Then $J$ admits a fibred limit if and only if the following conditions hold:
	\begin{enumerate}
		\item the functor $J_1 \colon \catE \to \catC$ admits a limit $(L, \lambda)$ in $\catC$; and
		\item the induced functor 
		\[
		\catL_{*}(\lambda)(J_2) \colon \catE \longrightarrow \catL(L)
		\]
		admits a limit that is preserved by every $\catL(u) \colon \catL(L) \to \catL(K)$ for $u \colon K \to L$ in $\catC$.
	\end{enumerate}
	In this case, the limit of $J$ is given by
	\[
	\bigl(L,\, \lim_{\catE}\, \catL_{*}(\lambda)(J_2)(E)\bigr).
	\]
\end{theorem}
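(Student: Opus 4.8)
The plan is to analyse both implications through the category of cones over $J$, using the explicit description of $\Sigma_\catC\catL$ in Proposition~\ref{prop:Grothendieck-construction-basic}. Recall that a \emph{fibred} limit of $J$ is a limit $(\ell,\kappa)$ of $J$ in $\Sigma_\catC\catL$ such that $\pi_1$ carries it to a limit of $J_1 = \pi_1\circ J$ in $\catC$. Writing $\ell = (L,\ell_2)$ and $\kappa_E = (\lambda_E,\rho_E)$ with $\lambda_E\colon L\to J_1(E)$ and $\rho_E\colon \ell_2\to\catL(\lambda_E)(J_2(E))$ in $\catL(L)$, one notes that the $\lambda_E$ always assemble into a cone over $J_1$, that being fibred means precisely that $(L,\lambda)$ is a limit of $J_1$, and that — granting this — the assignment $E\mapsto\catL(\lambda_E)(J_2(E))$ underlies a functor $G\defeq\catL_{*}(\lambda)(J_2)\colon\catE\to\catL(L)$, acting on $e\colon E\to E'$ by $\catL^{\lambda_E,J_1(e)}_{J_2(E')}\circ\catL(\lambda_E)\bigl((J_2)_e\bigr)$; functoriality (and independence, up to canonical isomorphism, of the chosen limit cone) follows from the coherence diagrams~\eqref{diag:unitor-indexed-category}–\eqref{diag:compositor-indexed-category} of $\catL$ together with the section axioms of $J_2$. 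Under this identification the family $(\rho_E)_E$ is exactly a cone over $G$ in $\catL(L)$.

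The heart of the argument is a bijection, to be verified by a diagram chase. Assume $J_1$ has a limit $(L,\lambda)$ and form $G = \catL_{*}(\lambda)(J_2)$. Then for every $(A,N)\in\Sigma_\catC\catL$ there is a bijection, natural in $(A,N)$,
\[
\{\text{cones }\Delta(A,N)\Rightarrow J\}\;\cong\;\coprod_{h\in\catC(A,L)}\{\text{cones }\Delta N\Rightarrow\catL(h)\circ G\text{ in }\catL(A)\},
\]
sending a cone $(t_E,t'_E)_E$ to the pair consisting of the unique mediating map $h$ with $\lambda_E\circ h = t_E$ together with the cone $\bigl((\catL^{h,\lambda_E}_{J_2(E)})^{-1}\circ t'_E\bigr)_E$, and conversely sending $(h,(\bar t'_E)_E)$ to $(\lambda_E\circ h,\ \catL^{h,\lambda_E}_{J_2(E)}\circ\bar t'_E)_E$. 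Checking that both assignments land among cones and are mutually inverse is the main technical step, invoking the composition and identity formulas of $\Sigma_\catC\catL$ and the pseudofunctor coherence of $\catL$ repeatedly. Since moreover $\Sigma_\catC\catL\bigl((A,N),(L,M)\bigr) = \coprod_{h\in\catC(A,L)}\catL(A)\bigl(N,\catL(h)(M)\bigr)$ for any $M\in\catL(L)$, and since composing a morphism $(h,h')\colon(A,N)\to(L,M)$ with a cone $\kappa$ whose first components are $\lambda$ yields a cone sitting in the $h$-th summand on the right, comparing the two coproducts termwise gives the criterion: \emph{an object $(L,M)$ equipped with a cone $\kappa$ whose first components are $\lambda$ is a (necessarily fibred) limit of $J$ if and only if, for every $h\colon A\to L$ in $\catC$, the functor $\catL(h)$ carries the second-component cone of $\kappa$ to a limit cone for $\catL(h)\circ G$} — equivalently, the second components of $\kappa$ form a limit of $G$ preserved by every such $\catL(h)$.

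This criterion delivers both directions. For ``if'': assuming (1) and a limit of $G = \catL_{*}(\lambda)(J_2)$ preserved by every $\catL(u)$ with $u\colon K\to L$ — which are precisely the morphisms ranging over $\catC(A,L)$ that appear in the criterion — the criterion produces a limit $(L,\lim_{\catE}G)$ of $J$, fibred since $\pi_1$ sends it to $(L,\lambda)$. For ``only if'': given a fibred limit, set $L\defeq\pi_1(\ell)$, $\lambda\defeq\pi_1\circ\kappa$, $\ell = (L,\ell_2)$; being fibred gives (1), and the criterion (now read from a limit to its preservation property) forces $\catL(h)(\ell_2) = \lim(\catL(h)\circ G)$ for every $h\colon A\to L$. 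Taking $h = \id[L]$ and absorbing the unitor $\catL^{L}$ identifies $\ell_2$, with its cone, as $\lim_{\catE}G$; taking $h = u$ general shows that $\catL(u)$ preserves this limit — this is (2), and it exhibits the limit of $J$ in the stated form. The only genuine obstacle is the bookkeeping in the central bijection: one must track the compositors $\catL^{fg}$ and unitors $\catL^{X}$ with care, invoke~\eqref{diag:unitor-indexed-category}–\eqref{diag:compositor-indexed-category} at the right places, and verify naturality in $(A,N)$; everything else is formal.
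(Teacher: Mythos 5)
Your proposal is correct and takes essentially the same approach as the paper: your central bijection between cones $\Delta(A,N)\Rightarrow J$ and pairs of a mediating morphism $h\in\catC(A,L)$ together with a cone over $\catL(h)\circ\catL_*(\lambda)(J_2)$ is precisely the paper's chain of natural isomorphisms (phrased there via the section categories $\Pi_\catE(\catL\circ\Delta^{op})$ rather than explicit cones), and your converse argument, specializing the universal property at $h=\id_L$ (absorbing the unitor) and at a general $u\colon K\to L$, is exactly how the paper extracts conditions (1) and (2) from a fibred limit. The differences are purely presentational.
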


\begin{proof}
First, given a limit cone $(L,\lambda:\Delta_L\Rightarrow J_1)$ in $\catC$,
reindexing the section $$J_2\in \Pi_{\catE}(\catL\circ J_1^{op})$$ along $\lambda$
yields $\catL_*(\lambda)(J_2)\in \Pi_{\catE}(\catL\circ \Delta_L^{op})$,
which under $$\Pi_{\catE}(\catL\circ \Delta_L^{op})\cong \CAT(\catE,\catL(L))$$
we view as a functor $\catE\to\catL(L)$.

Then we have natural isomorphisms
\begin{align*}
&\CAT(\catE,\Sigma_\catC \catL)\big(\Delta_{(C_1, C_2)},(J_1, J_2)\big)\\
&\cong
\Sigma_{f:\,\Delta_{C_1}\Rightarrow J_1}\ 
\Pi_{\catE}(\catL\circ \Delta_{C_1}^{op})\big(\Delta_{C_2},\,\catL_*(f)(J_2)\big)\\
&\cong
\Sigma_{u\in \catC(C_1, L)}\ 
\Pi_{\catE}(\catL\circ \Delta_{C_1}^{op})\big(\Delta_{C_2},\,\catL_*(\lambda\circ u)(J_2)\big)
\explainr{limit in $\catC$, $f=\lambda\circ u$}\\
&\cong
\Sigma_{u\in \catC(C_1, L)}\ 
\Pi_{\catE}(\catL\circ \Delta_{C_1}^{op})\big(\Delta_{C_2},\,\catL_*(u)( \catL_*(\lambda)(J_2))\big)
\explainr{pseudofunctoriality of $\catL$}\\
&\cong
\Sigma_{u\in \catC(C_1, L)}\ 
\CAT(\catE,\catL(C_1))\big(\Delta_{C_2},\catL(u)\circ \catL_*(\lambda)(J_2)\big)\\
&\cong
\Sigma_{u\in \catC(C_1, L)}\ 
\catL(C_1)\Big(C_2,\ \lim_E \catL(u)\big( \catL_*(\lambda)(J_2)(E)\big)\Big)
\explainr{limit in $\catL(C_1)$}\\
&\cong
\Sigma_{u\in \catC(C_1, L)}\ 
\catL(C_1)\Big(C_2,\ \catL(u)\big(\lim_E  \catL_*(\lambda)(J_2)(E)\big)\Big)
\explainr{$\catL(u)$ preserves the limit}\\
&\cong\ \Sigma_\catC\catL\big((C_1, C_2),(L, \lim_E  \catL_*(\lambda)(J_2)(E))\big).
\end{align*}
Conversely, if $J$ has a fibred limit $(L,X)$ with limiting cone $(\lambda_E,\chi_E):(L,X)\to (J_1(E),J_2(E))$, then since $\pi_1$ preserves fibred limits we get that $(L,\lambda)$ is a limit of $J_1$ in $\catC$, while unpacking the universal property in $\Sigma_\catC\catL$ shows $X\cong \lim_E \catL_*(\lambda)(J_2)(E)$ in $\catL(L)$; moreover, for each $u:C_1\to L$ the cone
\[
\big(\lambda_E\circ u,\; (\mu^{u,\lambda_E})_{J_2(E)}\circ \catL(u)(\chi_E)\big)
\]
exhibits $\catL(u)X$ as the limit of $E\mapsto \catL(\lambda_E\circ u)(J_2(E))$, so each $\catL(u)$ preserves this limit.

\end{proof}

\cite[Theorem 52]{nunes2023chad} shows that a similar construction relates fibred terminal coalgebras of fibred endofunctors on $\Sigma_\catC \catL\to \catC$
to pairs of a terminal coalgebra $L$ in $\catC$ and a terminal coalgebra in $\catL(L)$ that is preserved by change of base.

\subsection{Colimits in Grothendieck Constructions} \label{ssec:colimits-grothendieck}
By duality (and as noted in \cite[Theorem 4.2]{MR0213413}), Theorem \ref{thm:limits-groth} also tells us how to compute fibred colimits in an opfibration.
In particular, it applies to colimits in
bifibrations $\pi_1:\Sigma_\catC\catL\to\catC$ in the sense of a fibration such that all $\catL(f)$ have a left adjoint $\catL_!(f)$.
Indeed, in that case, $(\Sigma_\catC\catL)^{op}\cong \Sigma_{\catC^{op}}\catL_!^{op}\to \catC^{op}$,
which lets us construct fibred colimits in $\Sigma_\catC\catL$ out of colimits in $\catC$ and colimits in $\catL$ that are preserved by change of base in $\catL_!$.
However, there are also other cases of colimits in $\Sigma_\catC\catL$ that we are interested in.
In general, we have the following result.
(We imagine that it is known, but have not found a reference to it in the literature.)

\begin{theorem}[Fibred colimits in a Grothendieck construction] \label{thm:grothendieck-colim}
A functor $$J=(J_1,J_2):\catE\to\Sigma_\catC\catL$$ has a fibred colimit iff
$J_1:\catE\to\catC$ has a colimit $(L,\lambda)$ (i.e.\ a cocone $\lambda:J_1\Rightarrow \Delta_L$) and the functor 
\[
\catL(\lambda):\ \catL(L)\longrightarrow \Pi_\catE(\catL\circ J_1^{op}),\qquad
C_2\longmapsto \big(E\mapsto \catL(\lambda_E)(C_2)\big)
\]
has a left adjoint $\catL_!(\lambda)$. The colimit of $J$ is then given by $(L, \catL_!(\lambda)(J_2))$.
\end{theorem}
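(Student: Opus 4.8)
The plan is to mirror, in a direct form, the chain-of-isomorphisms argument used for Theorem~\ref{thm:limits-groth}: establish the \emph{if} direction by exhibiting $(L,\catL_!(\lambda)(J_2))$ as corepresenting the functor of cocones under $J$, and then read the \emph{only if} direction off the same chain. First I would record the cocone decomposition dual to the one in the limit case. Unpacking morphisms in $\Sigma_\catC\catL$ pointwise (Proposition~\ref{prop:Grothendieck-construction-basic}), for any object $(C_1,C_2)$ of $\Sigma_\catC\catL$ a natural transformation $J\Rightarrow\Delta_{(C_1,C_2)}$ amounts to a pair: a cocone $f\colon J_1\Rightarrow\Delta_{C_1}$ in $\catC$, together with a morphism of sections $J_2\to\catL_*(f)(\Delta_{C_2})$ in $\Pi_\catE(\catL\circ J_1^{op})$, where $\catL_*(f)(\Delta_{C_2})$ denotes the section $E\mapsto\catL(f_E)(C_2)$ whose structure maps arise from naturality of $f$. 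Hence
\[
\CAT(\catE,\Sigma_\catC\catL)(J,\Delta_{(C_1,C_2)})\ \cong\ \Sigma_{f\colon J_1\Rightarrow\Delta_{C_1}}\ \Pi_\catE(\catL\circ J_1^{op})\bigl(J_2,\ \catL_*(f)(\Delta_{C_2})\bigr).
\]

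For the \emph{if} direction I would run the chain as follows, assuming $(L,\lambda)$ is a colimit of $J_1$ and $\catL(\lambda)$ has a left adjoint $\catL_!(\lambda)$. Universality of $\lambda$ puts the cocones $f\colon J_1\Rightarrow\Delta_{C_1}$ in natural bijection with maps $u\in\catC(L,C_1)$ via $f=u\circ\lambda$, replacing the outer sum by $\Sigma_{u\in\catC(L,C_1)}$. Pseudofunctoriality of $\catL$ (its compositors) identifies $\catL_*(u\circ\lambda)(\Delta_{C_2})$ with $\catL(\lambda)\bigl(\catL(u)(C_2)\bigr)$, naturally in $u$ and $C_2$; the adjunction $\catL_!(\lambda)\dashv\catL(\lambda)$ turns $\Pi_\catE(\catL\circ J_1^{op})\bigl(J_2,\catL(\lambda)(\catL(u)(C_2))\bigr)$ into $\catL(L)\bigl(\catL_!(\lambda)(J_2),\catL(u)(C_2)\bigr)$; and the explicit description of morphisms identifies $\Sigma_{u\in\catC(L,C_1)}\catL(L)\bigl(\catL_!(\lambda)(J_2),\catL(u)(C_2)\bigr)$ with $\Sigma_\catC\catL\bigl((L,\catL_!(\lambda)(J_2)),(C_1,C_2)\bigr)$. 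All of these are natural in $(C_1,C_2)$, so $(L,\catL_!(\lambda)(J_2))\cong\colim J$; tracing the identity through the chain shows the induced universal cocone has $\catC$-component $\lambda$, so $\pi_1$ preserves this colimit and it is fibred.

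For the converse, suppose $J$ has a fibred colimit $(L,X)$ with universal cocone whose components are $(\lambda_E,\chi_E)$. Preservation by $\pi_1$ gives that $(L,\lambda)$ is a colimit of $J_1$. Specialising the chain above to $C_1=L$, $u=\id_L$ and letting $C_2$ vary over $\catL(L)$, the universal property of $(L,X)$ yields a natural isomorphism $\catL(L)(X,C_2)\cong\Pi_\catE(\catL\circ J_1^{op})\bigl(J_2,\catL(\lambda)(C_2)\bigr)$, which exhibits $X$, together with the unit assembled from the $\chi_E$, as a reflection of $J_2$ along $\catL(\lambda)$; thus $\catL_!(\lambda)(J_2)\cong X$ and the colimit is as claimed.

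I expect the main obstacle to be the $2$-categorical bookkeeping rather than anything conceptual: one must verify that the compositor isomorphisms assembling $\catL_*(u\circ\lambda)(\Delta_{C_2})\cong\catL(\lambda)(\catL(u)(C_2))$ are morphisms of sections, compatibly with the coherence equations defining $\Pi_\catE(\catL\circ J_1^{op})$, and that the substitution $f=u\circ\lambda$ interacts correctly with those structure maps, so that the displayed bijections are genuinely natural in both variables. It is also worth flagging that the converse yields only a reflection of $J_2$ along $\catL(\lambda)$, i.e.\ a partial left adjoint defined at $J_2$, which is exactly what the stated value $(L,\catL_!(\lambda)(J_2))$ of the colimit uses, and how one should read the hypothesis at minimum.
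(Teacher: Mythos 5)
Your proposal is correct and follows essentially the same route as the paper's proof: the same decomposition of cocones under $J$, the same chain of natural isomorphisms (universality of $\lambda$ in $\catC$, pseudofunctoriality of $\catL$, the adjunction $\catL_!(\lambda)\dashv\catL(\lambda)$) exhibiting $(L,\catL_!(\lambda)(J_2))$ as corepresenting the cocone functor, and the same converse reading off $\catL(L)(X,C_2)\cong\Pi_\catE(\catL\circ J_1^{op})\bigl(J_2,\catL(\lambda)(C_2)\bigr)$ as a reflection of $J_2$ along $\catL(\lambda)$. The caveat you flag, that the converse only yields a partial left adjoint defined at $J_2$, is acknowledged in the paper's proof in exactly the same terms.
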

\begin{proof}
Write $L\defeq\colim_E J_1(E)$. We have natural isomorphisms
\begin{align*}
&\CAT(\catE,\Sigma_\catC\catL)\big((J_1, J_2),\Delta_{(C_1, C_2)}\big)\\
&\cong\;
\Sigma_{\,f:\,J_1\Rightarrow \Delta_{C_1}}\ 
\Pi_\catE(\catL\circ J_1^{op})\big(J_2,\ E\mapsto \catL(f_E)(C_2)\big)\\
&\cong\;
\Sigma_{\,g\in \catC(L, C_1)}\ 
\Pi_\catE(\catL \circ J_1^{op})\big(J_2,\ E\mapsto \catL(g \circ \lambda_E)(C_2)\big)
\explainr{colimit in $\catC$: $f_E=g\circ\lambda_E$}\\
&\cong\;
\Sigma_{\,g\in \catC(L, C_1)}\ 
\Pi_\catE(\catL \circ J_1^{op})\big(J_2,\ E\mapsto \catL(\lambda_E)\big(\catL(g)(C_2)\big)\big)
\explainr{pseudofunctoriality of $\catL$}\\
&\cong\;
\Sigma_{\,g\in \catC(L, C_1)}\ 
\catL(L)\big(\catL_!(\lambda)(J_2),\, \catL(g)(C_2)\big)
\explainr{$\catL_!(\lambda)\dashv \catL(\lambda)$}\\
&\cong\;
\Sigma_\catC\catL\big((L, \catL_!(\lambda)(J_2)),\,(C_1, C_2)\big).
\end{align*}
Conversely, if $J$ has a fibred colimit $(L,X)$ with cocone $(\lambda_E,\chi_E)$, then $\pi_1$ preserves fibred colimits, so $(L,\lambda)$ is a colimit of $J_1$ in $\catC$. For each $C_2\in\catL(L)$, the universal property of the colimit yields natural bijections
\[
\catL(L)(X,C_2)\;\cong\;\Pi_{\catE}(\catL\circ J_1^{op})\big(J_2,\ E\mapsto \catL(\lambda_E)(C_2)\big),
\]
exhibiting $X$ as the value at $J_2$ of a left adjoint to $\catL(\lambda)$. If such fibred colimits exist for all $J_2$ (naturally in $J_2$), these representatives assemble to a functor $\catL_!(\lambda)$.
\end{proof}

\cite[Theorem 48]{nunes2023chad} shows that a similar construction relates fibred initial algebras of fibred endofunctors on $\Sigma_\catC \catL\to \catC$
to pairs of a initial algebra $L$ in $\catC$ and an initial algebra in $\catL(L)$.
Next, it may be instructive to specialize the colimit construction above to the special cases of coproducts and equalisers.

\noindent {\textbf{Coproducts.}}
A coproduct diagram is a functor $J_1:I\to \catC$ with $I$ discrete, i.e.\ an $I$-indexed family $\{X_i\}_{i\in I}$.
Then Theorem~\ref{thm:grothendieck-colim} expresses coproducts in $\Sigma_\catC\catL$ in terms of the coproduct $\bigsqcup_{i\in I} X_i$ in $\catC$ and a left adjoint to the canonical comparison functor
\[
\catL(\lambda):\ \catL\Big(\bigsqcup_{i\in I} X_i\Big)\ \longrightarrow\ 
\Pi_I(\catL\circ J_1^{op})\ \cong\ \bigsqcap_{i\in I}\catL(X_i).
\]
(Here $\Pi_I$ is the category of sections over the discrete $I$, hence the strict product of categories.)
For $J_2\in\Pi_I(\catL\circ J_1^{op})$ given by a family $(A_i)_{i\in I}$, the coproduct in $\Sigma_\catC\catL$ is
\[
\Big(\ \bigsqcup_{i\in I} X_i,\ \ \catL_!(\lambda)\big((A_i)_{i\in I}\big)\ \Big).
\]

\noindent {\textbf{Coequalizers.}}
Let us now specialise the colimit construction above to coequalizers.
Let $\catE$ be the category with a single parallel pair $r,s:0\rightrightarrows 1$.
A functor $J=(J_1,J_2):\catE\to\Sigma_\catC\catL$ is precisely a parallel pair
$$(f,\alpha),(g,\beta):(X,A)\rightrightarrows (Y,B)$$
with $$f,g:X\rightrightarrows Y\ \text{in }\catC,\;
\alpha:A\to\catL(f)(B),\ \beta:A\to\catL(g)(B).$$
Let $q:Y\to Q$ be the coequalizer of $f$ and $g$ in $\catC$, so $q\circ f=q\circ g$.
Writing $\lambda$ for the colimiting cocone, with components $\lambda_1=q$ and $\lambda_0=q\circ f=q\circ g$, Theorem~\ref{thm:grothendieck-colim} says: if
$\catL(\lambda):\catL(Q)\to\Pi_{\catE}(\catL\circ J_1^{op})$ has a left adjoint
$\catL_!(\lambda)$, then the colimit of $J$ in $\Sigma_\catC\catL$ is
\[
(Q,\ \catL_!(\lambda)(J_2)).
\]
This adjoint acts as follows.
An object $J_2$ of \(\Pi_{\catE}(\catL\circ J_1^{op})\) is a quadruple \((A,B,\alpha,\beta)\) with \(A\in\catL(X)\) and \(B\in\catL(Y)\), \(\alpha:A\to\catL(f)(B)\), \(\beta:A\to\catL(g)(B)\).
The left adjoint \(\catL_!(\lambda)\) sends \((A,B,\alpha,\beta)\) to \(C^\ast\in\catL(Q)\) equipped with maps \(\eta_0:A\to \catL(q\circ f)(C^\ast)\) and \(\eta_1:B\to \catL(q)(C^\ast)\) satisfying
\[
\mu^{f,q}_{C^\ast}\circ \catL(f)(\eta_1)\circ \alpha \;=\; \eta_0,
\qquad
\mu^{g,q}_{C^\ast}\circ \catL(g)(\eta_1)\circ \beta \;=\; \eta_0,
\]
(with compositors $\mu$ of $\catL$) and is universal: for any \(C\in\catL(Q)\), giving \(C^\ast\to C\) in \(\catL(Q)\) is the same as giving maps \(\eta_0':A\to\catL(q\circ f)(C)\), \(\eta_1':B\to\catL(q)(C)\) satisfying these equations.

\begin{corollary}[Coequalizers in a Grothendieck construction from coequalizers in the fibres] \label{cor:coequalizers:extensivity}
{Assume that the reindexings $\catL(\lambda_0),\catL(\lambda_1)$ admit left adjoints
\[
\catL_!(\lambda_0)\dashv \catL(\lambda_0)
\qquad\text{and}\qquad
\catL_!(\lambda_1)=\catL_!(q)\dashv \catL(q).
\]
}
Then, the fibre component $C^*$ of the coequalizer in the Grothendieck construction is precisely a coequalizer in $\catL(Q)$ of two canonical morphisms derived from $\alpha$ and $\beta$.
\end{corollary}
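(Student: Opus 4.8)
The plan is to compute, for an arbitrary object $C\in\catL(Q)$, the hom-set $\Pi_{\catE}(\catL\circ J_1^{op})\big((A,B,\alpha,\beta),\ \catL(\lambda)(C)\big)$ explicitly, and then to recognise it, after two applications of the mate correspondence, as the set of maps out of a coequalizer in $\catL(Q)$; combined with Theorem~\ref{thm:grothendieck-colim} and the Yoneda lemma this pins down $C^{\ast}$. First I would unwind the section structure of $\catL(\lambda)(C)$ using the cocone identities $\lambda_0=q\circ f=q\circ g$: since $\catL(\lambda)(C)$ has underlying objects $\catL(\lambda_0)(C)=\catL(q\circ f)(C)$ over $0$ and $\catL(q)(C)$ over $1$, its two section-structure maps are the inverse compositors $(\mu^{f,q}_C)^{-1}$ and $(\mu^{g,q}_C)^{-1}$. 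Spelling out a morphism $(A,B,\alpha,\beta)\to\catL(\lambda)(C)$ then gives a pair $(a\colon A\to\catL(\lambda_0)(C),\ b\colon B\to\catL(q)(C))$ in which $a$ is forced by $b$ via $a=\mu^{f,q}_C\circ\catL(f)(b)\circ\alpha$, subject to the single remaining equation
\[
\mu^{f,q}_C\circ\catL(f)(b)\circ\alpha \;=\; \mu^{g,q}_C\circ\catL(g)(b)\circ\beta ,
\]
so the hom-set is in bijection, naturally in $C$, with the set of such $b$.

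Next I would transpose $b$ to $\overline{b}\colon\catL_!(q)(B)\to C$ along $\catL_!(q)\dashv\catL(q)$, and use naturality of the compositor $\mu^{f,q}$ in the $\catL(Q)$-variable together with functoriality of $\catL(f)$ to rewrite the left-hand side of the constraint as $\catL(\lambda_0)(\overline{b})\circ\widetilde{\alpha}$, where $\widetilde{\alpha}\colon A\to\catL(\lambda_0)(\catL_!(q)(B))$ is built from $\alpha$, the compositor $\mu^{f,q}$ and the unit of $\catL_!(q)\dashv\catL(q)$, and is independent of $C$ and $b$; symmetrically the right-hand side becomes $\catL(\lambda_0)(\overline{b})\circ\widetilde{\beta}$. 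Transposing $\widetilde{\alpha},\widetilde{\beta}$ once more, now along $\catL_!(\lambda_0)\dashv\catL(\lambda_0)$, yields a parallel pair $\widehat{\alpha},\widehat{\beta}\colon\catL_!(\lambda_0)(A)\rightrightarrows\catL_!(q)(B)$ in $\catL(Q)$; since mating is natural in the codomain, the mate of $\catL(\lambda_0)(\overline{b})\circ\widetilde{\alpha}$ is $\overline{b}\circ\widehat{\alpha}$, and likewise for $\beta$. Hence the displayed equation is equivalent to $\overline{b}\circ\widehat{\alpha}=\overline{b}\circ\widehat{\beta}$, giving a natural isomorphism
\[
\Pi_{\catE}(\catL\circ J_1^{op})\big((A,B,\alpha,\beta),\ \catL(\lambda)(C)\big)
\;\cong\;
\{\,\overline{b}\colon\catL_!(q)(B)\to C \ \mid\ \overline{b}\circ\widehat{\alpha}=\overline{b}\circ\widehat{\beta}\,\}.
\]
By Theorem~\ref{thm:grothendieck-colim} the coequalizer of $(f,\alpha),(g,\beta)$ in $\Sigma_\catC\catL$ is $(Q,\catL_!(\lambda)(A,B,\alpha,\beta))$, so the left-hand side is $\catL(Q)(C^{\ast},C)$ with $C^{\ast}\defeq\catL_!(\lambda)(A,B,\alpha,\beta)$; comparing with the right-hand side and invoking Yoneda forces $C^{\ast}$ to be the coequalizer in $\catL(Q)$ of the canonical pair $\widehat{\alpha},\widehat{\beta}$ (with each side existing precisely when the other does), $\widehat{\alpha}$ being derived from $\alpha$ and $\widehat{\beta}$ from $\beta$. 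This is exactly the asserted description.

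The only genuine work is the bookkeeping with compositors and mates: one must verify that naturality of $\mu^{f,q}$ really does factor the constraint through a $C$-independent map $\widetilde{\alpha}$, and that the mate (under $\catL_!(\lambda_0)\dashv\catL(\lambda_0)$) of a postcomposite $\catL(\lambda_0)(\overline{b})\circ\widetilde{\alpha}$ is the postcomposite $\overline{b}\circ\widehat{\alpha}$ of the mate. Neither point is deep, but a misplaced compositor or a wrong variance would break the identification; once the final universal property is in hand, recognising it as that of a coequalizer and applying Yoneda is purely formal.
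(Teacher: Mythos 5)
Your proposal is correct and follows essentially the same route as the paper: you define $\widehat\alpha,\widehat\beta$ as the mates (under $\catL_!(\lambda_0)\dashv\catL(\lambda_0)$) of exactly the composites $\mu^{f,q}_{\catL_!(q)B}\circ\catL(f)(\eta^q_B)\circ\alpha$ (resp.\ with $g,\beta$) used in the paper, and then match the universal property of their coequalizer with that of $C^{\ast}=\catL_!(\lambda)(J_2)$. The only difference is presentational: the paper asserts the agreement of universal properties directly, whereas you make the hom-set computation and the two mate transpositions explicit and conclude by Yoneda, which is the same argument spelled out in slightly more detail.
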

\begin{proof}
We claim that $C^*$ above is computed as the coequalizer in $\catL(Q)$ of the two canonical maps
\[
\widehat\alpha,\widehat\beta:\;\catL_!(\lambda_0)A \rightrightarrows \catL_!(q)B
\]
that we define next.
Let $\eta^q: \id \Rightarrow \catL(q)\circ \catL_!(q)$ 
be the unit of $\catL_!(q)\dashv \catL(q)$.
Define $\widehat\alpha$ to be the mate (along $\catL_!(\lambda_0)\dashv \catL(\lambda_0)$) of the composite
\[
A \xrightarrow{\ \alpha\ } \catL(f)(B)
 \xrightarrow{\ \catL(f)(\eta^q_B)\ } \catL(f)(\catL(q)(\catL_!(q)B))
 \xrightarrow{\ \mu^{f,q}_{\catL_!(q)B}\ } \catL(\lambda_0)(\catL_!(q)B),
\]
and define $\widehat\beta$ analogously using $g$ and $\beta$ as the mate of
\[
A \xrightarrow{\ \beta\ } \catL(g)(B)
 \xrightarrow{\ \catL(g)(\eta^q_B)\ } \catL(g)(\catL(q)(\catL_!(q)B))
 \xrightarrow{\ \mu^{g,q}_{\catL_!(q)B}\ } \catL(\lambda_0)(\catL_!(q)B).
\]

If we then write down the universal property of the coequalizer of $\widehat\alpha$
and $\widehat\beta$, we see that it is precisely the universal property of $C^*$ above.
Indeed, let
\[
\begin{tikzcd}
\catL_!(\lambda_0)A \arrow[r, shift left=0.3em, "\widehat\alpha"] \arrow[r, shift right=0.3em, swap, "\widehat\beta"] & \catL_!(q)B \arrow[r, "e"] & C^\ast
\end{tikzcd}
\]
be a coequalizer in $\catL(Q)$.
Then the cocone maps
\[
\eta_1 \defeq \catL(q)(e)\circ \eta^q_B : B\to \catL(q)(C^\ast),
\qquad
\eta_0 \defeq \catL(\lambda_0)(e)\circ \mu^{f,q}_{\catL_!(q)B}\circ \catL(f)(\eta^q_B)\circ \alpha : A\to \catL(\lambda_0)(C^\ast)
\]
(equivalently, using $g,\beta$) satisfy
\[
\mu^{f,q}_{C^\ast}\circ \catL(f)(\eta_1)\circ \alpha \;=\; \eta_0
\qquad\text{and}\qquad
\mu^{g,q}_{C^\ast}\circ \catL(g)(\eta_1)\circ \beta \;=\; \eta_0,
\]
and are universal with this property: for any $C\in\catL(Q)$, postcomposition with
$e:\catL_!(q)B\to C^\ast$ induces a bijection between morphisms $C^\ast\to C$ in $\catL(Q)$ and pairs
$(\eta_0',\eta_1')$ with  
$$\eta_0':A\to \catL(\lambda_0)(C),\ \eta_1':B\to \catL(q)(C)$$
satisfying
$$\mu^{f,q}_{C}\circ \catL(f)(\eta_1')\circ \alpha=\eta_0'
=\mu^{g,q}_{C}\circ \catL(g)(\eta_1')\circ \beta.$$
\end{proof}

So, in this case, the coequalizer of $(f,\alpha), (g,\beta)$ in $\Sigma_\catC \catL$ is constructed precisely as $(Q, C^*)$ where $Q$ is the coequalizer of $f$ and $g$ in $\catC$ and $C^*$ is the coequalizer of $\widehat{\alpha}$ and $\widehat{\beta}$ in $\catL(Q)$.

\subsection{(Left Kan) $\catE$-Extensivity of Indexed Categories}
\label{ssec:left-kan-extensive}

We now introduce a generalisation of the notion of \emph{extensive indexed category} developed in our earlier work~\cite{nunes2023chad} and in \cite{nunes2025unravelingiterativechad}.
There, we observed that the classical notion of an \emph{extensive category} admits a fibrational reformulation: a category is extensive precisely when its basic fibration is extensive.
This perspective leads naturally to the notion of an extensive indexed category, capturing the fibrational essence of extensivity for dependent structure.

In the present work we take a further step, introducing the concept of a \emph{left Kan $\catE$-extensive indexed category}.
Intuitively, this relaxes the preservation of colimits (of shape~$\catE$) from preservation up to equivalence to requiring the existence of a left adjoint.
This refinement provides the natural categorical setting in which colimits in Grothendieck constructions can be described and computed uniformly, clarifying how colimits in the total category arise coherently from those in the base.

\begin{definition}[$\catE$-extensivity and left Kan $\catE$-extensivity]
	Let $\catC$ admit all colimits of shape~$\catE$.
	An indexed category $\catL \colon \catC^{op}\to\CAT$ is said to be \emph{$\catE$-extensive} if, for every diagram $J_1\colon \catE \to \catC$ with colimit $(L,\lambda)$, the canonical comparison functor
	\[
	\catL(\lambda)\colon \catL(L) \longrightarrow \Pi_{\catE}\big(\catL \circ J_1^{op}\big)
	\]
	is an equivalence.
	If, instead, this comparison functor merely admits a left adjoint, then $\catL$ is called \emph{left Kan $\catE$-extensive}.
	
	Since $\catC$ is a $1$-category, oplax limits in~$\catC$ coincide with strict limits; thus, $\catE$-extensivity amounts to the preservation of oplax limits of shape~$\catE$ up to equivalence, while left Kan extensivity weakens this to preservation up to an adjunction.
\end{definition}

The following corollary, immediate from Theorem~\ref{thm:grothendieck-colim}, provides the main motivation for the definition.

\begin{corollary}[Colimits in (Left Kan) extensive fibrations]
	If $\catC$ has colimits of shape~$\catE$ and $\catL\colon \catC^{op}\to\CAT$ is (left Kan) $\catE$-extensive, then the Grothendieck construction $\Sigma_\catC\catL$ admits fibred colimits of shape~$\catE$.
\end{corollary}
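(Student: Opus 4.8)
The plan is to deduce this directly from Theorem~\ref{thm:grothendieck-colim}, since the corollary asserts nothing more than that the hypotheses of that theorem are satisfied for every diagram of shape~$\catE$. Concretely, I would start with an arbitrary functor $J = (J_1, J_2)\colon \catE \to \Sigma_\catC\catL$, decomposed via the canonical decomposition into $J_1\colon \catE\to\catC$ and $J_2\in\Pi_\catE(\catL\circ J_1^{op})$. Since $\catC$ admits all colimits of shape~$\catE$, the functor $J_1$ has a colimit, say $(L,\lambda)$ with $\lambda\colon J_1\Rightarrow\Delta_L$; this verifies the first condition of Theorem~\ref{thm:grothendieck-colim}.

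For the second condition, I would invoke the hypothesis on $\catL$: by definition of left Kan $\catE$-extensivity applied to the diagram $J_1$, the canonical comparison functor
\[
\catL(\lambda)\colon \catL(L)\longrightarrow\Pi_\catE(\catL\circ J_1^{op})
\]
admits a left adjoint $\catL_!(\lambda)$. In the stronger case that $\catL$ is merely $\catE$-extensive, $\catL(\lambda)$ is an equivalence of categories, so I would simply note that an equivalence is in particular a functor admitting a left adjoint (any quasi-inverse serves, and one may left-adjointify it canonically); hence this case is subsumed by the previous one.

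With both conditions of Theorem~\ref{thm:grothendieck-colim} in hand, that theorem furnishes a fibred colimit of $J$, given explicitly by $(L,\catL_!(\lambda)(J_2))$. Since $J$ was an arbitrary diagram of shape~$\catE$, this shows that $\Sigma_\catC\catL$ has all fibred colimits of shape~$\catE$. There is no genuine obstacle here: the entire content is absorbed into Theorem~\ref{thm:grothendieck-colim} together with the definition of (left Kan) $\catE$-extensivity, and the only point requiring a word beyond unwinding definitions is the trivial observation that an equivalence of categories is, a fortiori, a left-adjoint-admitting functor.
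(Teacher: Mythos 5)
Your proposal is correct and is exactly the argument the paper intends: the corollary is stated as immediate from Theorem~\ref{thm:grothendieck-colim}, and your unwinding (colimit of $J_1$ from cocompleteness of $\catC$, left adjoint to $\catL(\lambda)$ from left Kan $\catE$-extensivity, with the plain $\catE$-extensive case subsumed since any equivalence is part of an adjoint equivalence and so admits a left adjoint) matches that reasoning. Nothing further is needed.
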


Evidently, an indexed category is (left Kan) $\catE$-extensive for all small~$\catE$ if and only if it is (left Kan) extensive for small coproduct and coequalizer diagrams.

When $\catE$ ranges over (finite) discrete categories $I$, one obtains the notion of a (finite) \emph{coproduct-extensive indexed category}~\cite[§6.5]{nunes2023chad}, \cite[§4.5]{nunes2025unravelingiterativechad}: a pseudofunctor $\catL\colon \catC^{op}\to\CAT$ that preserves (finite) products, that is, for which the canonical comparison
\[
\catL\Big(\bigsqcup_{i\in I} C_i\Big)\ \longrightarrow\ \bigsqcap_{i\in I}\catL(C_i)
\]
is an equivalence.
We refer to this special case (for $\catE = I$ finite) simply as an \emph{extensive indexed category}.
In particular, this recovers~\cite[§6.5]{nunes2023chad}: extensive indexed categories have (finite) coproducts in their Grothendieck construction.\footnote{See also the iteration-extensive notion in~\cite{nunes2025unravelingiterativechad}.}

If $\catL$ is the codomain fibration of~$\catC$ of Example~\ref{ex:codomain-fibration}, it is extensive for (finite) coproducts if and only if~$\catC$ is extensive in the classical sense (see, e.g.~\cite{MR1201048, zbMATH01685986, prezado2024extensive}).
Other examples include the lax comma and families fibrations of Examples~\ref{ex:lax-comma} and~\ref{ex:families}, since representable $2$-functors preserve products.
From the discussion at the beginning of Subsection~\ref{ssec:colimits-grothendieck}, the following observation is immediate.

\begin{corollary}[Left Kan $\catE$-extensivity of bifibrations]
	Let $\catL\colon \catC^{op}\to \CAT$ be an indexed category over a base $\catC$ such that both $\catC$ and each fibre $\catL(C)$ admit colimits of shape~$\catE$.
	Suppose further that each reindexing functor $\catL(f)$ admits a left adjoint $\catL_!(f)$, for all $f\colon C\to C'$ in~$\catC$; equivalently, that the projection $\Sigma_\catC \catL\to \catC$ is a bifibration.
	Then $\catL$ is left Kan $\catE$-extensive, and $\Sigma_\catC\catL\to\catC$ has fibred colimits of shape~$\catE$.
\end{corollary}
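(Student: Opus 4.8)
The plan is to read the result off from the dual of Theorem~\ref{thm:limits-groth}, exactly as the discussion opening this subsection suggests. Since the projection $\Sigma_\catC\catL\to\catC$ is a bifibration, one has $(\Sigma_\catC\catL)^{\op}\cong\Sigma_{\catC^{\op}}\catL_!^{\op}$ over $\catC^{\op}$, where $\catL_!^{\op}\colon\catC\to\CAT$ is the pseudofunctor $C\mapsto\catL(C)^{\op}$, $f\mapsto\catL_!(f)^{\op}$ (the mates of the compositors of $\catL$ make $\catL_!$ pseudofunctorial on $\catC$). Consequently a fibred colimit of a functor $J=(J_1,J_2)\colon\catE\to\Sigma_\catC\catL$ is the same thing as a fibred limit of $J^{\op}=(J_1^{\op},J_2^{\op})\colon\catE^{\op}\to\Sigma_{\catC^{\op}}\catL_!^{\op}$, so Theorem~\ref{thm:limits-groth} is applicable.

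Applying Theorem~\ref{thm:limits-groth} to $J^{\op}$ (with base $\catC^{\op}$ and indexed category $\catL_!^{\op}$), a fibred limit of $J^{\op}$ exists iff (1) $J_1^{\op}$ has a limit in $\catC^{\op}$ — equivalently, $J_1$ has a colimit $(L,\lambda)$ in $\catC$, which holds since $\catC$ has colimits of shape~$\catE$ — and (2) the reindexed diagram $(\catL_!^{\op})_*(\lambda^{\op})(J_2^{\op})\colon\catE^{\op}\to\catL(L)^{\op}$ has a limit preserved by every reindexing $\catL_!(u)^{\op}\colon\catL(L)^{\op}\to\catL(K)^{\op}$ with $u\colon L\to K$ in $\catC$. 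A limit of that diagram in $\catL(L)^{\op}$ is a colimit in $\catL(L)$ of the corresponding diagram $\catE\to\catL(L)$, and this exists because each fibre $\catL(L)$ has colimits of shape~$\catE$. The preservation clause, moreover, is automatic: $\catL_!(u)$ is a left adjoint (to $\catL(u)$), hence preserves all colimits, so $\catL_!(u)^{\op}$ preserves all limits, in particular the one at hand. Therefore condition~(2) holds, $J^{\op}$ has a fibred limit, and so $J$ has a fibred colimit. As this holds for every $J$ of shape~$\catE$, the Grothendieck construction $\Sigma_\catC\catL\to\catC$ has fibred colimits of shape~$\catE$.

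It remains to record that $\catL$ is left Kan $\catE$-extensive. Fix $J_1\colon\catE\to\catC$ with colimit $(L,\lambda)$. The previous paragraph shows that $(J_1,J_2)$ has a fibred colimit for \emph{every} section $J_2\in\Pi_\catE(\catL\circ J_1^{\op})$; by the argument in the proof of Theorem~\ref{thm:grothendieck-colim}, the existence of these colimits — naturally in $J_2$ — is precisely the statement that the comparison functor $\catL(\lambda)\colon\catL(L)\to\Pi_\catE(\catL\circ J_1^{\op})$ admits a left adjoint $\catL_!(\lambda)$, the value $\catL_!(\lambda)(J_2)$ being the fibre part of the colimit of $(J_1,J_2)$. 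Hence $\catL$ is left Kan $\catE$-extensive; the existence of fibred colimits of shape~$\catE$ in $\Sigma_\catC\catL$ is then recovered, consistently with the above, from the preceding Corollary on colimits in left Kan extensive fibrations.

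The only point demanding care is the bookkeeping of opposites underlying the identification $(\Sigma_\catC\catL)^{\op}\cong\Sigma_{\catC^{\op}}\catL_!^{\op}$ (a case of Proposition~\ref{prop:2equivalence}): one checks that reindexing $\catL_!^{\op}$ along a morphism $u$ of $\catC^{\op}$ is $\catL_!(u)^{\op}$, and that ``preserves limits in $\catL(L)^{\op}$'' translates into ``$\catL_!(u)$ preserves colimits in $\catL(L)$''. Once these dualisations are in place there is no further obstacle; indeed the whole content of the Corollary is the observation that change of base in $\catL_!$, being by left adjoints, is automatically cocontinuous, so that Theorem~\ref{thm:limits-groth} delivers the fibred colimits with no hypothesis beyond colimits in the base and in the fibres.
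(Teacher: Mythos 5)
Your proposal is correct and follows essentially the same route as the paper: the paper's proof likewise identifies $(\Sigma_\catC\catL)^{\op}\cong\Sigma_{\catC^{\op}}(\catL_!^{\op})$, applies Theorem~\ref{thm:limits-groth} to $\catL_!^{\op}$, and observes that the preservation hypothesis is automatic because each $\catL_!(u)$ is a left adjoint. Your additional paragraph extracting the left Kan $\catE$-extensivity via the adjoint-functor reading of Theorem~\ref{thm:grothendieck-colim} is a correct filling-in of a step the paper leaves implicit.
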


\begin{proof}
	Since $(\Sigma_\catC \catL)^{op}\cong \Sigma_{\catC^{op}}(\catL_!^{op})$, the result follows by applying Theorem~\ref{thm:limits-groth} to the indexed category $\catL_!^{op}\colon (\catC^{op})^{op}\to \CAT$, observing that each left adjoint $\catL_!(f)$ preserves colimits.
\end{proof}

In particular, the codomain fibration $\catC^{\to}\to\catC$ of Example~\ref{ex:codomain-fibration} is left Kan $\catE$-extensive whenever~$\catC$ admits colimits of shape~$\catE$.

In general, \emph{left Kan} extensivity for coequalizer diagrams is far more common than extensivity. Indeed:
\begin{lemma}[Coequalizer extensivity forces fibres to be groupoids]
\label{lem:fibres-are-groupoids}
Let $\catL:\catC^{op}\to\CAT$ be an indexed category that is extensive for coequalizer diagrams (i.e.\ $\catE$-extensive for the $\catE=\{0\rightrightarrows 1\}$). Then, for every $X\in\catC$, the fibre $\catL(X)$ is a groupoid.
\end{lemma}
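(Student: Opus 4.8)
The plan is to show that every morphism of the fibre $\catL(X)$ is invertible, by feeding $\catE$-extensivity the most degenerate coequalizer diagram over $X$. First I would fix $X\in\catC$ and take $J_1\colon\catE\to\catC$ (with $\catE=\{0\rightrightarrows 1\}$) to be the diagram constant at $X$ whose two non-identity arrows are both $\id_X$. Its coequalizer is $(X,\lambda)$ with colimiting cocone $\lambda_0=\lambda_1=\id_X$; in the notation of the definition of $\catE$-extensivity this is the case $L=X$.

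Next I would identify the codomain of the comparison functor. Every transition functor of $\catL\circ J_1^{op}\colon\catE^{op}\to\CAT$ is $\catL(\id_X)$, which the unitor $\eta^X$ makes isomorphic to $\id_{\catL(X)}$, so $\catL\circ J_1^{op}$ is pseudonaturally equivalent to the constant pseudofunctor at $\catL(X)$. Invariance of conical oplax limits (i.e.\ categories of sections) under such equivalences then gives an equivalence $\Pi_\catE(\catL\circ J_1^{op})\simeq[\catE,\catL(X)]$, the latter being the category whose objects are parallel pairs $A\rightrightarrows B$ in $\catL(X)$ and whose morphisms are commuting ladders. Under this equivalence the comparison functor $\catL(\lambda)$ corresponds to the diagonal $\Delta\colon\catL(X)\to[\catE,\catL(X)]$, $C\mapsto(\id_C,\id_C)$, since $\lambda_0=\lambda_1=\id_X$ and the section structure maps of $\catL(\lambda)(C)$ are built from the compositors and unitors of $\catL$, hence become identities after the identification.

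By hypothesis $\catL$ is $\catE$-extensive for this $\catE$, so $\catL(\lambda)$, and therefore $\Delta$, is an equivalence; in particular $\Delta$ is essentially surjective. To finish, take an arbitrary $h\colon A\to B$ in $\catL(X)$ and apply essential surjectivity to the object $(h,h)\in[\catE,\catL(X)]$: there is $C\in\catL(X)$ together with an isomorphism $\varphi\colon(h,h)\xrightarrow{\ \cong\ }\Delta C$, whose components $\varphi_0\colon A\to C$ and $\varphi_1\colon B\to C$ are then isomorphisms in $\catL(X)$. Naturality of $\varphi$ along either arrow $0\to 1$ reads $\varphi_1\circ h=\id_C\circ\varphi_0=\varphi_0$, so $h=\varphi_1^{-1}\circ\varphi_0$ is a composite of isomorphisms. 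As $h$ was arbitrary, $\catL(X)$ is a groupoid.

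The one delicate point is the identification in the second step, because $\catL$ is only a pseudofunctor and one must track the unitors $\eta^X$ when replacing $\catL\circ J_1^{op}$ by a constant pseudofunctor and $\Pi_\catE(\catL\circ J_1^{op})$ by $[\catE,\catL(X)]$. I would handle this either by invoking invariance of conical oplax limits under pseudonatural equivalence (from the two-dimensional limit literature already cited), or, to stay elementary, by bypassing the identification altogether: build directly the section $S_h\in\Pi_\catE(\catL\circ J_1^{op})$ corresponding to the pair $(h,h)$ (with both structure maps $\eta^X_B\circ h$), use essential surjectivity of $\catL(\lambda)$ to get an isomorphism $S_h\cong\catL(\lambda)(C)$, and unwind its compatibility conditions; using naturality of $\eta^X$ and invertibility of each $\eta^X_A$ one again arrives at $\varphi_1\circ h=\varphi_0$, with the same conclusion. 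Everything else is a routine specialisation of the definition of $\catE$-extensivity to the parallel-pair shape.
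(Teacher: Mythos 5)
Your proposal is correct and follows essentially the same route as the paper: specialise extensivity to the parallel pair $\id_X,\id_X$ with coequalizer $\id_X$, apply essential surjectivity of the comparison functor to the object corresponding to $(A,B,h,h)$, and read off invertibility of $h$ from the components of the resulting isomorphism (the paper keeps the canonical isomorphism $\theta$ built from unitors/compositors explicit rather than strictifying to the diagonal, which is exactly your "elementary bypass" variant). No gaps.
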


\begin{proof}
We fix $X\in\catC$ and take the parallel pair $f=g=\id_X$, whose coequalizer is $$q=\id_X:X\to X .$$ By extensivity, the comparison
\[
\catL(\lambda):\ \catL(X)\ \xrightarrow{\ \simeq\ }\ \Pi_{\catE}(\catL\circ J_1^{op})
\]
is an equivalence, where objects on the right are quadruples $(A,B,\alpha,\beta)$ with $A,B\in\catL(X)$ and
$\alpha,\beta:A\to \catL(\id_X)(B)\cong B$.
The essential image of $\catL(\lambda)$ consists of objects $(B,B,\theta,\theta)$ where $\theta$ is the canonical isomorphism (built from the unitors/compositors of the pseudofunctor $\catL$).

Given any morphism $u:A\to B$ in $\catL(X)$, we consider $(A,B,u,u)$. Since $\catL(\lambda)$ is essentially surjective, there exist $C$ in $\catL \left( X\right) $, $a:A\xrightarrow{\cong}B'$ and $b:B \xrightarrow{\cong} B'$ such that
$$b\circ u=\theta\circ a. $$ 
Hence $u=b^{-1}\circ\theta\circ a$ is invertible. This proves that every arrow in $\catL(X)$ is invertible, that is to say,  $\catL(X)$ is a groupoid.
\end{proof}
In particular, the families (and lax comma) fibrations tend not to be extensive for coequalizer diagrams.
However, observe that, by Corollary \ref{cor:coequalizers:extensivity}, the families indexed category $\Cat(-,\catD)|_{\Set}$ is left Kan extensive for coequalizer diagrams as long as $\catD$ has enough coequalizers.
There do exist non-trivial examples of extensivity for coequalizers, though, as the following example shows.

\begin{example}[Representables are extensive for all colimits]
Let $C\in \catC$. Then the representable functor $\catC(-, C):\catC^{op}\to \CAT$, where we consider a set as a discrete category, preserves all limits, so it defines an indexed category (with discrete fibres) that is extensive for all colimits, including coequalisers.
\end{example} 

\section{Monoidal structures in Grothendieck constructions}

Theorem~\ref{thm:limits-groth} entails, in particular, that for a cartesian monoidal category~$\catC$, 
there is an equivalence between 
indexed cartesian monoidal structures on $\catL \colon \catC^{\mathrm{op}} \to \CAT$ 
(that is, monoidal structures on the fibres $\catL(C)$ preserved by the reindexing functors $\catL(f)$) 
and fibred cartesian monoidal structures on $\Sigma_\catC \catL$ 
(in the sense of fibred functors, or pseudomorphisms of fibrations).  

This correspondence is realized by taking
\[
\terminal_{\Sigma_\catC\catL}
= (\terminal_\catC,\, \terminal_{\catL(\terminal_\catC)}),
\qquad
(C, L) \times_{\Sigma_\catC\catL} (C', L')
= \bigl(C \times_\catC C',\, \catL(\pi_1)(L) \times_{\catL(C \times_\catC C')} \catL(\pi_2)(L')\bigr),
\]
where $\pi_1$ and $\pi_2$ denote the product projections
$C \xleftarrow{\;\pi_1\;} C \times C' \xrightarrow{\;\pi_2\;} C'$ in~$\catC$.  

As shown by Shulman~\cite{shulman2008framed}, this correspondence extends beyond the cartesian case, encompassing
monoidal, braided monoidal, and symmetric monoidal structures on $\Sigma_\catC \catL$, 
provided that the monoidal structure on~$\catC$ itself is cartesian.

\begin{theorem}[Monoidal structures on a Grothendieck construction {\cite[Theorem~12.7]{shulman2008framed}}]\label{thm:fibred-monoidal}
	Assume that $\catC$ is a cartesian monoidal category.  
	Then the following definitions of the monoidal unit~$I$ and tensor product~$\otimes$
	determine an equivalence between fibred monoidal structures on $\Sigma_\catC \catL$
	and indexed monoidal structures on~$\catL$:
	\small 
	\[
	\begin{array}{lll}
		I_{\Sigma_\catC\catL} = (\terminal_\catC,\, I_{\catL(\terminal_\catC)}) &
		\text{and} &
		(C, L) \otimes_{\Sigma_\catC\catL} (C', L')
		= \bigl(C \times_{\catC} C',\, \catL(\pi_1)(L) \otimes_{\catL(C \times_{\catC} C')} \catL(\pi_2)(L')\bigr), \\[1em]
		I_{\catL(C)} = \catL(!_C)(\pi_2(I_{\Sigma_\catC \catL})) &
		\text{and} &
		L \otimes_{\catL(C)} L'
		= \catL(\langle \id[C], \id[C] \rangle)
		\bigl(\pi_2((C, L) \otimes_{\Sigma_\catC \catL} (C, L'))\bigr).
	\end{array}
	\] \normalsize
	Moreover, fibred braidings for $\otimes_{\Sigma_\catC \catL}$ correspond bijectively 
	to indexed braidings for $\otimes_\catL$, 
	and a braiding for $\otimes_{\Sigma_\catC \catL}$ is symmetric if and only if 
	the corresponding braiding for $\otimes_\catL$ is symmetric.
\end{theorem}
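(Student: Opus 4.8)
The plan is to transport every piece of data across the Grothendieck biequivalence of Proposition~\ref{prop:2equivalence}, following the argument of Shulman~\cite{shulman2008framed}. Write $\catT \defeq \Sigma_\catC\catL$ and let $\pi_1\colon\catT\to\catC$ be the canonical fibration. A fibred monoidal structure on $\catT$ is a monoidal structure $(\otimes, I, \alpha, \ell, r)$ on $\catT$ such that $\pi_1$ is strictly monoidal for the cartesian structure $(\times,\terminal_\catC)$ on $\catC$ and such that $\otimes\colon\catT\times\catT\to\catT$ and the unit are pseudomorphisms of fibrations (over $\times\colon\catC\times\catC\to\catC$ and over $\terminal_\catC$, respectively). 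The product fibration $\catT\times\catT\to\catC\times\catC$ corresponds under the biequivalence to the indexed category $(C,C')\mapsto\catL(C)\times\catL(C')$, so $\otimes$ corresponds to a pseudomorphism of indexed categories $(\times,\tau)$, that is, a pseudonatural transformation with components $\tau_{C,C'}\colon\catL(C)\times\catL(C')\to\catL(C\times C')$, while the unit reduces to a single object $I_0\in\catL(\terminal_\catC)$; and the coherence isomorphisms $\alpha,\ell,r$ --- being fibred natural isomorphisms over the \emph{canonical} associativity and unit isomorphisms of the cartesian structure on $\catC$ --- correspond to indexed modifications relating the evident composites of $(\times,\tau)$ and $I_0$.

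First I would show that this transported data is precisely an indexed monoidal structure on $\catL$. Restricting $\tau$ along the diagonal $\langle\id[C],\id[C]\rangle\colon C\to C\times C$ makes each fibre $\catL(C)$ monoidal, with tensor $L\otimes_{\catL(C)}L'\defeq\catL(\langle\id[C],\id[C]\rangle)\big(\tau_{C,C}(L,L')\big)$ and unit $\catL(!_C)(I_0)$, exactly the formulas in the statement; the fibrewise associator and unitors are read off from the modifications above. That each reindexing $\catL(f)$ (for $f\colon C\to C'$) is strong monoidal follows from pseudonaturality of $\tau$ together with the identities $\langle\id[C'],\id[C']\rangle\circ f=(f\times f)\circ\langle\id[C],\id[C]\rangle$ and $!_{C'}\circ f=!_{C}$ in $\catC$; and that the unitors $\eta^A$ and compositors $\mu^{f,g}$ of $\catL$ are monoidal natural transformations is precisely the modification (coherence) conditions, modulo the pseudofunctoriality of $\catL$. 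Conversely, given an indexed monoidal structure, the formulas in the statement define $\otimes$ and $I$ on $\catT$, and I would verify the pentagon and triangle on $\catT$ directly from the explicit description of $\Sigma_\catC\catL$ in Proposition~\ref{prop:Grothendieck-construction-basic}: the base components are the cartesian coherences of $\catC$, hence automatic, and the fibre components reduce --- after using the cartesian structure of $\catC$ to rearrange the diagonals and projections that appear --- to naturality of the fibrewise associator and unitors, strong monoidality of the reindexing functors, and monoidality of $\eta$ and $\mu$.

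For the braiding part, a fibred braiding for $\otimes_\catT$ is a fibred natural isomorphism over the symmetry of the cartesian structure on $\catC$, so it corresponds to a modification compatible with $\tau$; restricting along the diagonal yields a natural isomorphism $L\otimes_{\catL(C)}L'\cong L'\otimes_{\catL(C)}L$ in each fibre, the two hexagon identities on $\catT$ translate (again because the symmetry of the cartesian $\catC$ is the canonical one) into the two fibrewise hexagon identities, and the modification condition is exactly compatibility of the fibrewise braidings with reindexing. Since the condition $\sigma_{L',L}\circ\sigma_{L,L'}=\id$ is a pointwise identity in the fibre, it transfers verbatim, so symmetric braidings on $\otimes_\catT$ correspond to symmetric indexed braidings on $\otimes_\catL$.

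The main obstacle is the coherence bookkeeping in the second paragraph: reducing the pentagon and triangle on $\catT$ to fibrewise statements is a nontrivial diagram chase in which the non-identity compositors $\mu^{f,g}$ of the pseudofunctor $\catL$ must be carried through every step, and it uses the cartesian structure of $\catC$ essentially --- it is the existence of the diagonal that lets one recover a fibrewise tensor from $\tau$ in the first place, which is why the hypothesis that $\catC$ be cartesian monoidal cannot be dropped. Everything else is routine transport across the biequivalence of Proposition~\ref{prop:2equivalence}.
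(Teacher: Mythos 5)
The paper offers no proof of Theorem~\ref{thm:fibred-monoidal} to compare yours against: the statement is imported from Shulman, and the text immediately afterwards defers all justification to \cite[Theorem~12.7]{shulman2008framed} (with \cite{zbMATH07229468} cited for a comprehensive treatment). Your sketch is, in outline, exactly the standard argument underlying Shulman's result: transport the external tensor across the biequivalence of Proposition~\ref{prop:2equivalence} to a pseudonatural family $\tau_{C,C'}\colon \catL(C)\times\catL(C')\to\catL(C\times C')$ together with a unit object of $\catL(\terminal_\catC)$, restrict along diagonals to obtain the fibrewise structure, and use the cartesian identities $\pi_i\circ\Delta_C=\id$, $\langle\id,\id\rangle\circ f=(f\times f)\circ\langle\id,\id\rangle$, $!_{C'}\circ f={!_C}$ to transfer coherence; the braiding discussion is likewise correct. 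The one thing you should state explicitly, since the theorem claims an \emph{equivalence} of structures and not merely two constructions, is that the two assignments are mutually inverse up to coherent isomorphism: for the round trip starting from a fibred structure, the isomorphism $\pi_2\bigl((C,L)\otimes_{\Sigma_\catC\catL}(C',L')\bigr)\cong \catL(\Delta_{C\times C'})\bigl(\catL(\pi_1)L\otimes_{\catL(C\times C')}\catL(\pi_2)L'\bigr)$ is obtained from pseudonaturality of $\tau$ at $(\pi_1,\pi_2)$ together with $(\pi_1\times\pi_2)\circ\Delta_{C\times C'}=\id$ (this is precisely where the fibredness of $\otimes_{\Sigma_\catC\catL}$, i.e.\ preservation of cartesian arrows, is used), while the other round trip follows from strong monoidality of $\catL(\Delta_C)$ and $\catL(\Delta_C)\catL(\pi_i)\cong\id$. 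These are short checks and fit under the coherence bookkeeping you already flag, so I see no genuine gap, only this point to be spelled out in a full write-up.
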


A detailed analysis of monoidal structures on Grothendieck constructions lies beyond the scope of this work.  
Our interest here concerns the circumstances under which the above monoidal structure is \emph{closed}.  
For the general results underlying Theorem~\ref{thm:fibred-monoidal}, we refer the reader to~\cite{shulman2008framed},  
and for a comprehensive treatment of monoidal Grothendieck constructions, to~\cite{zbMATH07229468}.

\subsection{Monoidal closed structures on Grothendieck constructions}

A natural question is how monoidal closure of the projection 
$\pi_1 \colon \Sigma_\catC \catL \to \catC$ 
relates to the monoidal closure of the fibre categories of~$\catL$.  
The remainder of this paper is devoted primarily to this question.

The closest result in this direction that we are aware of is the following, due to Shulman.

\begin{lemma}[{\cite[Proposition~13.25]{shulman2008framed}}]\label{lem:shulman-monoidal-closure}
	Suppose that $\catL$ is an indexed monoidal category over a cartesian monoidal category~$\catC$,  
	and that for every morphism $f \colon C \to C'$ in~$\catC$,  
	the reindexing functor $\catL(f) \colon \catL(C') \to \catL(C)$ admits a right adjoint 
	$\catL_*(f)$ satisfying the right Beck--Chevalley condition.  
	Then the following are equivalent:
	\begin{itemize}
		\item the functors 
		$(-) \otimes_{\Sigma_\catC\catL} (C_2, L_2) \colon \catL(C_1) \to \catL(C_1 \times C_2)$ 
		have right adjoints if and only if the functors 
		$(-) \otimes_{\catL(C)} L \colon \catL(C) \to \catL(C)$ 
		have right adjoints;
		\item the functors 
		$(C_1, L_1) \otimes_{\Sigma_\catC\catL} (-) \colon \catL(C_2) \to \catL(C_1 \times C_2)$ 
		have right adjoints if and only if the functors 
		$L \otimes_{\catL(C)} (-) \colon \catL(C) \to \catL(C)$ 
		have right adjoints.
	\end{itemize}
\end{lemma}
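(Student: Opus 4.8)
The plan is to reduce both biconditionals to the elementary fact that a composite of functors, each of which admits a right adjoint, again admits a right adjoint. The idea is to express every one-variable tensoring functor of one flavour (fibred, or fibrewise) as a composite of a one-variable tensoring functor of the other flavour with reindexing functors $\catL(f)$; since by hypothesis every $\catL(f)$ admits a right adjoint $\catL_*(f)$ — in particular $\catL(\pi_1)$, $\catL(\pi_2)$ and $\catL(\langle\id[C],\id[C]\rangle)$ do — the conclusion will follow formally.

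First I would start from the descriptions of the fibred and fibrewise tensors provided by Theorem~\ref{thm:fibred-monoidal}:
\[
(C_1,L_1)\otimes_{\Sigma_\catC\catL}(C_2,L_2)=\bigl(C_1\times C_2,\ \catL(\pi_1)(L_1)\otimes_{\catL(C_1\times C_2)}\catL(\pi_2)(L_2)\bigr),
\]
\[
L\otimes_{\catL(C)}L'=\catL(\langle\id[C],\id[C]\rangle)\bigl(\catL(\pi_1)(L)\otimes_{\catL(C\times C)}\catL(\pi_2)(L')\bigr).
\]
Reading off the first formula, for fixed $C_1,C_2$ and $L_2\in\catL(C_2)$ one obtains the factorisation
\[
(-)\otimes_{\Sigma_\catC\catL}(C_2,L_2)\ =\ \bigl((-)\otimes_{\catL(C_1\times C_2)}\catL(\pi_2)(L_2)\bigr)\circ\catL(\pi_1)\ \colon\ \catL(C_1)\longrightarrow\catL(C_1\times C_2),
\]
and, symmetrically, $(C_1,L_1)\otimes_{\Sigma_\catC\catL}(-)=\bigl(\catL(\pi_1)(L_1)\otimes_{\catL(C_1\times C_2)}(-)\bigr)\circ\catL(\pi_2)$. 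Combining the second formula with the instance $C_1=C_2=C$ of the first gives
\[
(-)\otimes_{\catL(C)}L'\ =\ \catL(\langle\id[C],\id[C]\rangle)\circ\bigl((-)\otimes_{\Sigma_\catC\catL}(C,L')\bigr),\qquad
L\otimes_{\catL(C)}(-)\ =\ \catL(\langle\id[C],\id[C]\rangle)\circ\bigl((C,L)\otimes_{\Sigma_\catC\catL}(-)\bigr).
\]

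With these four identities in hand, both biconditionals follow immediately. For the first: if every fibrewise functor $(-)\otimes_{\catL(D)}M$ has a right adjoint, then in particular $(-)\otimes_{\catL(C_1\times C_2)}\catL(\pi_2)(L_2)$ does, and composing with the right adjoint of $\catL(\pi_1)$ shows that $(-)\otimes_{\Sigma_\catC\catL}(C_2,L_2)$ does; conversely, if every $(-)\otimes_{\Sigma_\catC\catL}(C_2,L_2)$ has a right adjoint, then the instance $(-)\otimes_{\Sigma_\catC\catL}(C,L')\colon\catL(C)\to\catL(C\times C)$ does, and composing with the right adjoint of $\catL(\langle\id[C],\id[C]\rangle)$ shows that $(-)\otimes_{\catL(C)}L'$ does. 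The second biconditional is established by the same argument with $\pi_1$ and $\pi_2$ interchanged.

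The routine but delicate part — where I would be most careful — is verifying that the four displayed identities are genuine equalities (or at least canonical natural isomorphisms) of functors, rather than mere objectwise coincidences; this requires bookkeeping the unitors and compositors of the pseudofunctor $\catL$ together with the coherence isomorphisms packaged into the tensor of Theorem~\ref{thm:fibred-monoidal}. The existence arguments above invoke only that each $\catL(f)$ has \emph{some} right adjoint; the \emph{right Beck--Chevalley} hypothesis becomes essential precisely when the conclusion is read structurally — that is, when one wants the internal homs so obtained to be stable under reindexing, so that the equivalence is one of indexed (respectively fibred) closed structures, with the internal homs of $\Sigma_\catC\catL$ computed fibrewise through the mates of the $\catL_*(f)$. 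I expect identifying exactly this compatibility to be the main obstacle.
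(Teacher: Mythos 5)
The paper does not actually prove this lemma; it is imported verbatim from Shulman (\cite[Proposition~13.25]{shulman2008framed}), so there is no in-paper argument to compare against. Judged on its own, your proof is correct and is the standard argument one would expect. The factorisation $(-)\otimes_{\Sigma_\catC\catL}(C_2,L_2)=\bigl((-)\otimes_{\catL(C_1\times C_2)}\catL(\pi_2)(L_2)\bigr)\circ\catL(\pi_1)$ holds essentially by the definition of the fibred tensor in Theorem~\ref{thm:fibred-monoidal}, and $(-)\otimes_{\catL(C)}L'\;\cong\;\catL(\langle\id[C],\id[C]\rangle)\circ\bigl((-)\otimes_{\Sigma_\catC\catL}(C,L')\bigr)$ holds up to the canonical natural isomorphism built from strong monoidality of $\catL(\langle\id[C],\id[C]\rangle)$, the compositors, and $\pi_i\circ\langle\id[C],\id[C]\rangle=\id[C]$; since existence of a right adjoint is invariant under natural isomorphism of functors, the coherence bookkeeping you flag is genuinely routine and the composite-of-right-adjoints step closes both biconditionals. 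Your closing observation is also accurate and worth stating: the bare existence statement uses only the right adjoints $\catL_*(\pi_1)$, $\catL_*(\pi_2)$ and $\catL_*(\langle\id[C],\id[C]\rangle)$, and the right Beck--Chevalley hypothesis is not needed for it --- it matters only when one asks the resulting homs to be compatible with reindexing, i.e.\ for the fibred/indexed closure statements of Theorem~\ref{thm:fibred-monoidal-closed}, which is the context the hypothesis is inherited from.
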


As observed in~\cite[Remark~13.12]{shulman2008framed},  
Lemma~\ref{lem:shulman-monoidal-closure} does not, in itself, imply that $\Sigma_\catC \catL$ is monoidal closed.  
Indeed, many naturally arising monoidal closed structures on total categories $\Sigma_\catC \catL$ of fibrations 
are not fibred, and the sufficient conditions ensuring their existence are typically subtle.  
In the present work, we shall introduce one such sufficient condition on the monoidal structure, 
which we call \emph{$(\Sigma, \catC)$-cotractability}.  

This condition is implied, in particular, by indexed monoidal closure of~$\catL$,  
but it is strictly more general.  
Before turning to it, we record the following result, which provides a fibred analogue of Shulman’s theorem.

\begin{theorem}[Fibred $\Longleftrightarrow$ indexed monoidal closure]\label{thm:fibred-monoidal-closed}
	Let $\catC$ be a cartesian monoidal category, and let $\catL$ be an indexed monoidal category over~$\catC$.  
	By Theorem~\ref{thm:fibred-monoidal}, the projection 
	$\Sigma_\catC \catL \to \catC$ 
	is canonically a fibred monoidal category.  
	Then the following conditions are equivalent:
	\begin{enumerate}
		\item[\textnormal{(i)}] \textit{(Indexed side)}:  
		$\catC$ is cartesian closed,  
		$\catL$ is \emph{indexed monoidal left-closed} (resp.\ right-closed),  
		meaning that each fibre $\catL(C)$ is left-closed (resp.\ right-closed)  
		and that every reindexing functor preserves the closure;  
		furthermore, for every projection $\pi_2 \colon C \times C' \to C'$,  
		the reindexing functor 
		$\catL(\pi_2) \colon \catL(C') \to \catL(C \times C')$  
		admits a right adjoint $\catL_*(\pi_2)$ satisfying the right Beck--Chevalley condition 
		for pullback squares along projections~$\pi_2$.
		\item[\textnormal{(ii)}] \textit{(Fibred side)}:  
		The Grothendieck construction 
		$\Sigma_\catC \catL \to \catC$ 
		is a fibred monoidal left-closed (resp.\ right-closed) fibration.
	\end{enumerate}
	
	Moreover, when these equivalent conditions hold,  
	the closures determine one another via the explicit formulas
	\begin{equation}\tag{$\dagger$}\label{eq:Sigma-internal-hom}
		(C,L)\multimap_{\Sigma_{\catC}\catL} (C',L')
		\;=\;
		\Bigl(C\Rightarrow_{\catC} C',\;
		\catL_*(\pi_2)\bigl(\,\catL(\pi_1)(L)\multimap_{\catL(C\times(C\Rightarrow_{\catC} C'))}\catL(\ev)(L')\,\bigr)\Bigr),
	\end{equation}
	and
	\begin{equation}\tag{$\ddagger$}\label{eq:fibre-internal-hom}
		L\multimap_{\catL(C)} L'
		\;=\;
		\catL\bigl(\Lambda(\pi_2)\bigr)\Bigl(\,
		\pi_2\bigl((C,L)\multimap_{\Sigma_{\catC}\catL}(C,L')\bigr)\Bigr),
	\end{equation}
	where $\pi_1,\pi_2$ are the product projections,  
	$\ev \colon C\times (C\Rightarrow_{\catC}C')\to C'$ is the evaluation map,  
and $\Lambda(\pi_2) \colon C \to C\Rightarrow_{\catC}C$ is the transpose of 
$\pi_2 \colon C\times C \to C$ under the exponential adjunction in~$\catC$.  
In both~\eqref{eq:Pi-along-projection} and~\eqref{eq:fibre-internal-hom},  
$\pi_2(-)$ denotes the second (fibre) component of an object of $\Sigma_{\catC}\catL$;  
in~\eqref{eq:Pi-along-projection} we reindex along $\Lambda(\id_{C\times C'}) \colon C'\to C\Rightarrow_{\catC}(C\times C')$,  
and in~\eqref{eq:fibre-internal-hom} along $\Lambda(\pi_2) \colon C\to C\Rightarrow_{\catC}C$.

	In particular, when \textnormal{(ii)} holds, the right adjoints along projections are determined uniquely by
\begin{equation}\tag{$\star$}\label{eq:Pi-along-projection}
\catL_*(\pi_2)(M)
\;=\;
\catL\big(\Lambda(\id_{C\times C'})\big)
\left(
\pi_2\Bigl((C,I)\multimap_{\Sigma_{\catC}\catL}(C\times C',M)\Bigr)
\right),
\end{equation}
	and they satisfy the right Beck--Chevalley condition.  
	The right-closed case is entirely analogous.
\end{theorem}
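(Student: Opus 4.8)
The plan is to prove the equivalence (i)$\Leftrightarrow$(ii) by the method used for Theorems~\ref{thm:limits-groth} and~\ref{thm:grothendieck-colim}, namely an explicit chain of natural isomorphisms obtained by unpacking $\Sigma_\catC\catL\big((A,X),(B,Y)\big)=\sum_{f\in\catC(A,B)}\catL(A)\big(X,\catL(f)(Y)\big)$ from Proposition~\ref{prop:Grothendieck-construction-basic}. Since, by Theorem~\ref{thm:fibred-monoidal}, the indexed and fibred \emph{monoidal} structures already determine one another, only the \emph{closed} part is at stake. I read ``$\Sigma_\catC\catL\to\catC$ is a fibred monoidal left-closed fibration'' in~(ii) as: $\Sigma_\catC\catL$ is (globally) left-closed and each internal-hom functor $(C,L)\multimap_{\Sigma_\catC\catL}(-)$ is a pseudomorphism of fibrations over some functor $\catC\to\catC$. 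As $(C,L)\otimes_{\Sigma_\catC\catL}(-)$ lies over $C\times(-)\colon\catC\to\catC$, a fibred right adjoint must lie over a right adjoint of $C\times(-)$; hence fibred closure of $\Sigma_\catC\catL$ already forces $\catC$ to be cartesian closed, which is the first half of (ii)$\Rightarrow$(i).

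For the remainder of (ii)$\Rightarrow$(i) I would read off the indexed data directly. With $\catC$ cartesian closed in hand, I \emph{define} the fibre internal hom by~\eqref{eq:fibre-internal-hom} and the pushforward along a second projection by~\eqref{eq:Pi-along-projection} (equivalently, by the case $L=I$ of~\eqref{eq:Sigma-internal-hom}, using that reindexing preserves the unit and that $I\multimap(-)\cong(-)$). The adjunctions $L\otimes_{\catL(C)}(-)\dashv L\multimap_{\catL(C)}(-)$ and $\catL(\pi_2)\dashv\catL_*(\pi_2)$ then follow by transporting the corresponding $\Sigma$-level adjunction along the fibre inclusion $\catL(C)\hookrightarrow\Sigma_\catC\catL$, $L\mapsto(C,L)$, together with the Shulman formulas of Theorem~\ref{thm:fibred-monoidal} for the fibre tensor. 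That reindexing functors preserve this fibre closure, and that $\catL_*(\pi_2)$ satisfies the right Beck--Chevalley condition along projections, follow from a mate argument: preservation of cartesian morphisms by the fibred functors $(C,L)\multimap_{\Sigma_\catC\catL}(-)$ says precisely that reindexing commutes with the fibre closure, and instantiating this along the canonical cleavage over the relevant pullback squares yields Beck--Chevalley.

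The substantive direction is (i)$\Rightarrow$(ii). Here I take~\eqref{eq:Sigma-internal-hom} as the \emph{definition} of $(C,L)\multimap_{\Sigma_\catC\catL}(C',L')$ and produce, naturally in $(C'',L'')$, an isomorphism $\Sigma_\catC\catL\big((C,L)\otimes_{\Sigma_\catC\catL}(C'',L''),(C',L')\big)\cong\Sigma_\catC\catL\big((C'',L''),(C,L)\multimap_{\Sigma_\catC\catL}(C',L')\big)$, as a composite of: (a) unpacking the left-hand side as a dependent sum over base maps $f\colon C\times C''\to C'$ and applying the fibrewise left-closed adjunction to trade $\catL(\pi_1)(L)\otimes(-)$ for $\catL(\pi_1)(L)\multimap(-)$ in the fibre over $C\times C''$; (b) transposing the base component via cartesian closure of $\catC$ to a map $g\colon C''\to(C\Rightarrow_\catC C')$ and rewriting the occurring reindexings through pseudofunctoriality of $\catL$, using that $f=\ev\circ(\id[C]\times g)$ and that the projection $C\times C''\to C$ factors through $\id[C]\times g$; (c) using that reindexing along $\id[C]\times g$ preserves the fibre closure, to factor it out of the internal hom and recognise the fibre component $\catL(\pi_1)(L)\multimap\catL(\ev)(L')$ of~\eqref{eq:Sigma-internal-hom}; (d) using the right Beck--Chevalley condition for the pullback square whose horizontal maps are $\id[C]\times g$ and $g$ and whose vertical maps are second projections, followed by $\catL(\pi_2)\dashv\catL_*(\pi_2)$, to rewrite the fibre hom-set into the form $\catL(C'')\bigl(L'',\,\catL(g)\,\catL_*(\pi_2)(\cdots)\bigr)$; (e) reassembling the dependent sum as the right-hand side via~\eqref{eq:Sigma-internal-hom}. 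Fibredness of the resulting closed structure is then immediate from the shape of~\eqref{eq:Sigma-internal-hom}: $\pi_1$ sends the internal hom to $C\Rightarrow_\catC C'$ and the counit to the evaluation in~$\catC$, while every building block of~\eqref{eq:Sigma-internal-hom} preserves cartesian morphisms, so $(C,L)\multimap_{\Sigma_\catC\catL}(-)$ is a pseudomorphism of fibrations over $C\Rightarrow_\catC(-)$.

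It remains to check the mutual compatibility of the formulas: that substituting~\eqref{eq:Sigma-internal-hom} into~\eqref{eq:fibre-internal-hom} recovers the given fibre internal hom, and that~\eqref{eq:Pi-along-projection} is the case $L=I$ of~\eqref{eq:Sigma-internal-hom} reindexed back along $\Lambda(\id[C\times C'])$; both are short diagram chases using only pseudofunctoriality, unit preservation and Beck--Chevalley, and uniqueness of the $\catL_*(\pi_2)$ is then automatic. The main obstacle I anticipate is the bookkeeping in steps (b)--(d) above: identifying exactly which reindexing functors appear after the base transposition, and verifying that the right Beck--Chevalley hypothesis is precisely --- neither more nor less than --- what makes the chain natural in $(C'',L'')$ and in the fibre variables. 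Conceptually, each individual step is a routine Yoneda/adjunction manipulation. The right-closed case is formally dual, interchanging $(-)\otimes A$ with $A\otimes(-)$ throughout.
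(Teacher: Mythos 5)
Your (i)$\Rightarrow$(ii) direction is essentially the paper's own proof: the same chain of natural isomorphisms (fibrewise left-closure, the adjunction $\catL(\pi_2)\dashv\catL_*(\pi_2)$, exponential transposition in $\catC$, pseudofunctoriality, preservation of $\multimap$ by reindexing along $\id_C\times g$, and Beck--Chevalley for $\pi_2$ against $\id_C\times g$), with \eqref{eq:Sigma-internal-hom} taken as the definition; you invoke the adjunction and Beck--Chevalley in a slightly different order than the paper does, which is immaterial.

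In (ii)$\Rightarrow$(i) your overall strategy (define $\catL_*(\pi_2)$ by \eqref{eq:Pi-along-projection} and the fibre hom by \eqref{eq:fibre-internal-hom}, then get Beck--Chevalley by mates) again matches the paper, but one step is under-justified as stated: you cannot simply ``transport the $\Sigma$-level adjunction along the fibre inclusion $\catL(C)\hookrightarrow\Sigma_\catC\catL$'', because that inclusion is not full --- $\Sigma_\catC\catL((C,L),(C,L'))$ is a dependent sum over \emph{all} base morphisms $C\to C$, not just $\id_C$. What makes the extraction of the fibrewise adjunctions work is the fixed-base device the paper isolates as \eqref{eq:fixed-base}, $\catL(C)(L,\catL(c')(L'))\cong\{(c,m)\mid c=c'\}$, together with keeping track of exactly which base morphisms correspond under the fibred adjunction; this is where the explicit transposes $\Lambda(\pi_2)$ and $\Lambda(\id_{C\times C'})$ and the identity $\ev\circ(\id\times\Lambda(\pi_2))=\pi_2$ do real work. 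Relatedly, your opening inference that fibred closure forces $\catC$ to be cartesian closed (``a fibred right adjoint must lie over a right adjoint'') needs the unit and counit, not merely the hom functors, to lie over base data: with only ``$(C,L)\multimap(-)$ is a pseudomorphism of fibrations'' the base components of $\eta$ and $\epsilon$ do not automatically descend to natural transformations in $\catC$. The paper instead exhibits $C\Rightarrow_\catC C'$ concretely as the base component of $(C,I)\multimap_{\Sigma_\catC\catL}(C',I)$, with $\ev$ the base component of the counit, and derives the transposition bijection from the hom adjunction restricted to fixed base components --- the same fibredness content, but with the bookkeeping made explicit. None of this changes your plan, which goes through once these fixed-base details are supplied; the right-closed case is dual, as you say.
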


\begin{proof}
Suppose that $\catC$ is cartesian closed and that $\catL$ is indexed monoidal left-closed (so each $\multimap$ is preserved by base change) and that $\catL(\pi_2)\dashv\catL_*(\pi_2)$ satisfies right Beck--Chevalley for projections.
Then we have the following natural isomorphisms\\
\resizebox{\linewidth}{!}{
\parbox{\linewidth}{
\begin{align*}
&\Sigma_\catC \catL\big((C_1, L_1) \otimes (C_2, L_2), (C_3, L_3)\big) =\\
&\Sigma_{f\in \catC(C_1\times C_2, C_3)} \catL(C_1\times C_2)\big(\catL(\pi_1)(L_1)\otimes \catL(\pi_2)(L_2), \catL(f)(L_3)\big)\cong\explainr{left-closure in $\catL(C_1\times C_2)$}\\
&\Sigma_{f} \catL(C_1\times C_2)\big(\catL(\pi_2)(L_2), \,\catL(\pi_1)(L_1)\multimap_{\catL(C_1\times C_2)} \catL(f)(L_3)\big)\cong\explainr{$\catL(\pi_2)\dashv\catL_*(\pi_2)$}\\
&\Sigma_{f} \catL(C_2)\big(L_2, \,\catL_*(\pi_2)\big(\catL(\pi_1)(L_1)\multimap_{\catL(C_1\times C_2)} \catL(f)(L_3)\big)\big)\cong\explainr{exponential adjunction in $\catC$}\\
&\Sigma_{g \in \catC(C_2, C_1\Rightarrow_{\catC} C_3)} \catL(C_2)\big(L_2, \,\catL_*(\pi_2)\big(\catL(\pi_1)(L_1)\multimap \catL(\ev \circ (C_1\times g))(L_3)\big)\big)\cong\explainr{$\catL$ pseudofunctor}\\
&\Sigma_{g} \catL(C_2)\big(L_2, \,\catL_*(\pi_2)\big(\catL(C_1\times g)(\catL(\pi_1)(L_1)\multimap \catL(\ev)(L_3))\big)\big)\cong\\
&\Sigma_{g} \catL(C_2)\big(L_2, \,\catL_*(\pi_2)\big(\catL(\pi_1)(L_1)\multimap \catL(C_1\times g)(\catL(\ev)(L_3))\big)\big)\cong\explainr{reindexing preserves $\multimap$ and $\pi_1\circ (C_1\times g)=\pi_1$}\\
&\Sigma_{g} \catL(C_2)\big(L_2, \,\catL(g)\big(\catL_*(\pi_2)(\catL(\pi_1)(L_1)\multimap \catL(\ev)(L_3))\big)\big)\cong\explainr{Beck--Chevalley for $\pi_2$ vs.\ $C_1\times g$}\\
&\Sigma_\catC \catL\big((C_2,L_2),(C_1\Rightarrow_{\catC} C_3,\catL_*(\pi_2)(\catL(\pi_1)(L_1)\multimap \catL(\ev)(L_3)))\big)\\
&=\,\Sigma_\catC \catL\big((C_2,L_2),(C_1,L_1)\multimap_{\Sigma_\catC\catL}(C_3,L_3)\big).
\end{align*}}}
Thus \eqref{eq:Sigma-internal-hom} defines a left-exponential in $\Sigma_\catC\catL$, which is clearly fibred by construction.
This shows \textnormal{(i)}$\Rightarrow$\textnormal{(ii)}.

\medskip

Conversely, if $\Sigma_{\catC}\catL\to\catC$ is fibred monoidal left-closed, then for every projection
$\pi_2:C\times C'\to C'$ the change of base
$\catL(\pi_2):\catL(C')\to\catL(C\times C')$ has a right adjoint given by the (corrected) formula
\[
\catL_*(\pi_2)(M)\;:=\;
\catL\big(\Lambda(\id_{C\times C'})\big)
\left(
\pi_2\Big((C,I)\multimap_{\Sigma_{\catC}\catL}(C\times C',M)\Big)\right)\in \catL(C'),
\]
and this adjunction satisfies right Beck--Chevalley for pullback squares along projections. Indeed, using $(C,I)\otimes(C',X)\cong (C\times C',\,\catL(\pi_2)(X))$ and monoidal closure of $\Sigma_\catC \catL$ yields natural isomorphisms:
\small 
\begin{align*}
&\catL(C\times C')\big(\catL(\pi_2)X,M\big)
\cong\explainr{by \eqref{eq:fixed-base} with $c'=\id_{C\times C'}$}\\
&\Big\{(h,m)\in (\Sigma_{\catC}\catL)\big((C\times C',\catL(\pi_2)X),(C\times C',M)\big)\mid h=\id\Big\}
\cong\explainr{$(C,I)\otimes(C',X)\cong(C\times C',\catL(\pi_2)X)$}\\
&\Big\{(k,n)\in (\Sigma_{\catC}\catL)\big((C,I)\otimes(C',X),(C\times C',M)\big)\mid k=\id\Big\}
\cong\explainr{closure in $\Sigma_{\catC}\catL$}\\
&\Big\{(g,p)\in (\Sigma_{\catC}\catL)\big((C',X),(C,I)\multimap_{\Sigma_{\catC}\catL}(C\times C',M)\big)
\mid g=\Lambda(\id_{C\times C'})\Big\}
\cong\explainr{apply \eqref{eq:fixed-base} with $c'=\Lambda(\id_{C\times C'})$}\\
&\catL(C')\Big(X,\ \catL(\Lambda(\id_{C\times C'}))\big(\pi_2((C,I)\multimap_{\Sigma_{\catC}\catL}(C\times C',M))\big)\Big)
\cong\explainr{definition of $\catL_*(\pi_2)$ via \eqref{eq:Pi-along-projection}}\\
&\catL(C')\big(X,\ \catL_*(\pi_2)(M)\big).
\end{align*}
\normalsize \noindent exhibiting $\catL(\pi_2)\dashv \catL_*(\pi_2)$.

\smallskip

\emph{Right Beck--Chevalley for these adjoints.}
For any $g:C'\to C''$ consider the pullback square
\[
\begin{tikzcd}
C\times C' \arrow[r,"C\times g"] \arrow[d,"\pi_2"'] & C\times C'' \arrow[d,"\pi_2"]\\
C' \arrow[r,"g"] & C''
\end{tikzcd}
\]
The pseudofunctoriality of $\catL$ gives an invertible $2$-cell
$\catL(C\times g)\circ \catL(\pi_2)\;\cong\; \catL(\pi_2)\circ \catL(g)$.
Taking mates under the adjunction $\catL(\pi_2)\dashv\catL_*(\pi_2)$ (whose unit/counit are defined via the fibred internal hom in $\Sigma_\catC\catL$ as above) yields the Beck--Chevalley isomorphism
\[
\catL(g)\,\catL_*(\pi_2)\ \cong\ \catL_*(\pi_2)\,\catL(C\times g),
\]
natural in $M\in\catL(C\times C'')$.

\smallskip

Let
\[
\bigl(C\Rightarrow_{\catC}C',\,E\bigr)\;:=\;(C,I)\multimap_{\Sigma_{\catC}\catL}(C',I).
\]
Define $\ev:C\times (C\Rightarrow_{\catC}C')\to C'$ to be the \emph{base component} of the counit
\[
(C,I)\otimes (C\Rightarrow_{\catC}C',E)\;\longrightarrow\; (C',I)
\]
in $\Sigma_{\catC}\catL$. The transpose bijection
$\catC(X\times C,C')\cong \catC\bigl(X,\,C\Rightarrow_{\catC}C'\bigr)$
is induced by the internal hom adjunction in $\Sigma_\catC \catL$; hence $\catC$ is cartesian closed.

For any $c':C\to C'$ we have
\begin{equation}
\tag{$*$}\label{eq:fixed-base}
\catL(C)(L,\catL(c')(L')) \;\cong\; \{(c, m) \in (\Sigma_\catC \catL)((C, L), (C', L')) \mid c = c'\}.
\end{equation}
Define, for $L,L'\in\catL(C)$,
\[
L\multimap_{\catL(C)} L'
\;:=\;
\catL\big(\Lambda(\pi_2)\big)\Big(\,\pi_2\big((C, L)\multimap_{\Sigma_\catC\catL}(C, L')\big)\Big),
\]
where $\Lambda(\pi_2):C\to C\Rightarrow_{\catC} C$ is the transpose of $\pi_2:C\times C\to C$.
Then, naturally in $X,L,L'$,
\small 
\begin{align*}
&\catL(C)\big(X,\,L\multimap L'\big)
\cong\\
&\big\{(g,m)\in (\Sigma_\catC\catL)\big((C,X),(C\Rightarrow_{\catC} C,\pi_2((C,L)\multimap(C,L')))\big)\,\big|\,g=\Lambda(\pi_2)\big\}\cong\hspace{-6pt}\\
&\big\{(h,n)\in (\Sigma_\catC\catL)\big((C,X)\otimes(C,L),(C,L')\big)\,\big|\,h=\ev\circ(\id_C\times \Lambda(\pi_2))\big\}=\explainr{closure in $\Sigma_\catC \catL$}\\
& \big\{(h,n)\in (\Sigma_\catC\catL)\big((C,X)\otimes(C,L),(C,L')\big)\,\big|\,h=\pi_2\big\}\cong\explainr{$\ev\circ(\id\times \Lambda(\pi_2))=\pi_2$}\\
& \catL(C\times C)\big(\catL(\pi_1)X\otimes \catL(\pi_2)L,\;\catL(\pi_2)L'\big)\cong\explainr{by \eqref{eq:fixed-base} with $c'=\pi_2$}\\
& \catL(C)\big(\catL_*(\pi_2)(\catL(\pi_1)X\otimes \catL(\pi_2)L),\,L'\big)\cong\explainr{$\catL(\pi_2)\dashv\catL_*(\pi_2)$}\\
& \catL(C)\big(X\otimes L,\,L'\big).\explainr{right Beck--Chevalley}
\end{align*} \normalsize 
So the formula \eqref{eq:fibre-internal-hom} equips each fibre $\catL(C)$ with a left internal hom; its indexedness follows from the fibred closure and Beck--Chevalley. This shows \textnormal{(ii)}$\Rightarrow$\textnormal{(i)}, with $\catL_*(\pi_2)$ given by \eqref{eq:Pi-along-projection}.

The constructions \eqref{eq:Sigma-internal-hom} and \eqref{eq:fibre-internal-hom} are mutually inverse, yielding the desired equivalence. The right-closed case is analogous.
\end{proof}

We now turn to the more general situation of closed structures on $\Sigma_\catC \catL$ which are not necessarily fibred.

\section{$\Sigma$-(Co)tractable Monoidal Structures}\label{sec:sigma-tractable-monoidal}

We now seek more general sufficient conditions ensuring that a fibred monoidal structure
on $\Sigma_\catC \catL$ is closed.  
To this end, we focus on a particularly well-behaved class of monoidal structures on~$\catL$,  
which we call \emph{$\Sigma$-tractable}.  
Intuitively, these are monoidal structures that admit a canonical decomposition of morphisms 
$A \to B \otimes C$ into a component that depends only on~$A$ and~$B$, 
together with a residual morphism into~$C$.

\begin{definition}[$\Sigma,\catC$-tractable monoidal structure]
	Let $\catC$ be a category with a terminal object.  
	A monoidal category $(\catL, I, \otimes, a, l, r)$ is said to be 
	\emph{$\Sigma,\catC$-tractable} if it is equipped with:
	\begin{itemize}
		\item a functor $(-) \multimap T(-) \colon \catL^{\mathrm{op}} \times \catL \to \catC$;
		\item a functor 
		\[
		\abstrcompldom \colon \terminal \downarrow \bigl((-) \multimap T(-)\bigr) \longrightarrow \catL,
		\]
		from the comma category of $\terminal \colon \terminal \to \catC$ and 
		$(-) \multimap T(-) \colon \catL^{\mathrm{op}} \times \catL \to \catC$;
		\item a natural isomorphism between functors 
		$\catL^{\mathrm{op}} \times \catL \times \catL \to \Set$:
		\[
		\catL(A, B \otimes C)
		\;\cong\;
		\Sigma_{f \in \catC(\terminal, A \multimap T B)}
		\catL\bigl(\abstrcompldom(A, B, f),\, C\bigr).
		\]
	\end{itemize}
	Dually, $(\catL, I, \otimes, a, l, r)$ is said to be 
	\emph{$\Sigma,\catC$-cotractable} if 
	$(\catL^{\mathrm{op}}, I, \otimes^{\mathrm{op}}, a^{-1}, l^{-1}, r^{-1})$ 
	is $\Sigma,\catC$-tractable.
\end{definition}

In most cases, $\Sigma,\catC$-tractable monoidal categories arise from a more intrinsic notion,  
which we call simply \emph{$\Sigma$-tractable}.  

\begin{definition}[$\Sigma$-tractable monoidal structure]
	A monoidal category $(\catL, I, \otimes, a, l, r)$ is said to be 
	\emph{$\Sigma$-tractable} if it is equipped with:
	\begin{itemize}
		\item a functor $T \colon \catL \to \catL$;
		\item a functor 
		\[
		\abstrcompldom \colon \catL \downarrow T \longrightarrow \catL,
		\]
		from the comma category $\catL \downarrow T$ of $\id[\catL] \colon \catL \to \catL$ and $T \colon \catL \to \catL$;
		\item a natural isomorphism between functors 
		$\catL^{\mathrm{op}} \times \catL \times \catL \to \Set$:
		\[
		\catL(A, B \otimes C)
		\;\cong\;
		\Sigma_{f \in \catL(A, T B)} \catL\bigl(\abstrcompldom(A, B, f),\, C\bigr).
		\]
	\end{itemize}
	Dually, $(\catL, I, \otimes, a, l, r)$ is said to be 
	\emph{$\Sigma$-cotractable} if 
	$(\catL^{\mathrm{op}}, I, \otimes^{\mathrm{op}}, a^{-1}, l^{-1}, r^{-1})$ 
	is $\Sigma$-tractable.
\end{definition}

\begin{lemma}[$\Sigma$-tractable, $\catC$-enriched $\Rightarrow$ $\Sigma,\catC$-tractable]
Suppose that
\begin{itemize}
    \item $\catL$ is a $\Sigma$-tractable monoidal category;
    \item $\catL(-,-):\catL^{op}\times \catL\to \Set$ factors over $\catC(\terminal,-):\catC\to \Set$ for some category $\catC$ with a terminal object $\terminal$ (for example because $\catL$ is enriched over $\catC$).
\end{itemize}
Then, $\catL$ is $\Sigma,\catC$-tractable.
\end{lemma}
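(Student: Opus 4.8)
The plan is to obtain the $\Sigma,\catC$-tractable data from the given $\Sigma$-tractable data $(T,\abstrcompldom,\cong)$ by transporting everything along the hom-functor factorisation. Write $H\colon \catL^{\op}\times\catL\to\catC$ for a functor through which $\catL(-,-)$ factors, together with its natural isomorphism $\catL(X,Y)\cong\catC(\terminal,H(X,Y))$ (an equality, in the enriched case). First I would set $(-)\multimap T(-)\defeq H(-,T-)$, the composite
\[
\catL^{\op}\times\catL\ \xrightarrow{\ \id[\catL^{\op}]\times T\ }\ \catL^{\op}\times\catL\ \xrightarrow{\ H\ }\ \catC,
\]
so that $A\multimap TB=H(A,TB)$; this is the required functor $\catL^{\op}\times\catL\to\catC$.

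Restricting the factorisation isomorphism along $\id[\catL^{\op}]\times T$ yields a natural isomorphism $\psi_{A,B}\colon \catL(A,TB)\xrightarrow{\ \cong\ }\catC(\terminal,\,A\multimap TB)$ of functors $\catL^{\op}\times\catL\to\Set$. Since each $\psi_{A,B}$ is a bijection and $\psi$ is natural, it lifts to an isomorphism of categories
\[
\Psi\colon\ \catL\downarrow T\ \xrightarrow{\ \cong\ }\ \terminal\downarrow\bigl((-)\multimap T(-)\bigr),\qquad (A,B,f)\longmapsto (A,B,\psi_{A,B}(f)),
\]
over $\catL^{\op}\times\catL$: on morphisms $\Psi$ acts as the identity in the $\catL^{\op}\times\catL$-component, the compatibility equation required in the target comma category being exactly the $\psi$-image of the one holding in $\catL\downarrow T$. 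I then define $\abstrcompldom_{\catC}\defeq\abstrcompldom\circ\Psi^{-1}\colon\terminal\downarrow\bigl((-)\multimap T(-)\bigr)\to\catL$; by construction $\abstrcompldom_{\catC}(A,B,\psi_{A,B}(f))=\abstrcompldom(A,B,f)$.

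Finally I would reindex the $\Sigma$-tractable isomorphism along $\psi$: substituting $f=\psi_{A,B}^{-1}(g)$ in $\catL(A,B\otimes C)\cong\Sigma_{f\in\catL(A,TB)}\catL(\abstrcompldom(A,B,f),C)$ and using $\abstrcompldom_{\catC}(A,B,g)=\abstrcompldom(A,B,\psi_{A,B}^{-1}(g))$ produces
\[
\catL(A,B\otimes C)\ \cong\ \Sigma_{g\in\catC(\terminal,\,A\multimap TB)}\catL\bigl(\abstrcompldom_{\catC}(A,B,g),\,C\bigr),
\]
still natural in $(A,B,C)$ because $\psi$ is natural and $\Psi$ respects the projections to $\catL^{\op}\times\catL$. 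This is exactly the data and property exhibiting $\catL$ as $\Sigma,\catC$-tractable. The one point requiring care — and essentially the only place the argument could go astray — is the bookkeeping of the variances of the two comma categories: one must verify that $\catL\downarrow T$ and $\terminal\downarrow\bigl((-)\multimap T(-)\bigr)$ carry matching variances in their $A$- and $B$-slots (the latter forced by the codomain $\catL^{\op}\times\catL$ of $(-)\multimap T(-)$), so that $\Psi$ genuinely is an isomorphism over $\catL^{\op}\times\catL$ and the transports of $\abstrcompldom$ and of the $\Sigma$-isomorphism are coherent. Granting that, the lemma follows formally from naturality of $\psi$.
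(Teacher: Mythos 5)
Your proposal is correct and follows essentially the same route as the paper: define $(-)\multimap T(-)$ by composing the factoring functor with $T$, identify the comma categories $\catL\downarrow T$ and $\terminal\downarrow\bigl((-)\multimap T(-)\bigr)$ to transport $\abstrcompldom$, and obtain the required natural isomorphism from the factorisation of the hom-functor. Your version merely spells out explicitly (via $\psi$ and $\Psi$) the comparison of comma categories and the naturality bookkeeping that the paper's proof leaves as an observation.
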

\begin{proof}
Write $(-)\multimap (-)$ for the functor $\catL^{op}\times \catL\to \catC$, such that $\catC(\terminal, A\multimap B)\cong \catL(A,B)$.
Then, using $T:\catL\to\catL$ and $(-)\multimap (-)$, we have a composite functor $(-)\multimap T(-):\catL^{op}\times \catL\to\catC$.
Observe that we have an equivalence of comma categories $\terminal\downarrow ((-)\multimap T(-)) \simeq \catL\downarrow T$ to get our desired $\abstrcompldom{}{}{}$.
We get the desired natural isomorphism by definition of $(-)\multimap (-)$.
\end{proof}
\begin{corollary}
Any $\Sigma$-tractable monoidal category $\catL$ is, in particular, $\Sigma,\Set$-tractable. 
\end{corollary}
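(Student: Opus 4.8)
The plan is to deduce the statement directly from the preceding Lemma, specialised to $\catC = \Set$. First I would note that $\Set$ is an admissible base for the definition of $\Sigma,\catC$-tractability, since it has a terminal object, namely any singleton $\terminal = \{\ast\}$. The only remaining hypothesis to check is that the hom-functor $\catL(-,-)\colon \catL^{\mathrm{op}}\times\catL\to\Set$ factors over $\Set(\terminal,-)\colon \Set\to\Set$. This is automatic: evaluation at the unique element of $\terminal$ provides a natural isomorphism $\Set(\terminal,-)\cong \id_{\Set}$, so \emph{every} functor valued in $\Set$ factors over $\Set(\terminal,-)$ up to natural isomorphism, the lift being the functor itself. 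In particular $(-)\multimap(-)\defeq \catL(-,-)\colon \catL^{\mathrm{op}}\times\catL\to\Set$ does the job, with $\Set(\terminal, A\multimap B)\cong\catL(A,B)$ naturally.

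With these two observations, the hypotheses of the Lemma are met with $\catC = \Set$: following its proof, one forms the composite $(-)\multimap T(-)\colon \catL^{\mathrm{op}}\times\catL\to\Set$, uses the equivalence of comma categories $\terminal\downarrow\bigl((-)\multimap T(-)\bigr)\simeq \catL\downarrow T$ to transport the $\Sigma$-tractable datum $\abstrcompldom\colon \catL\downarrow T\to\catL$ to the required functor out of $\terminal\downarrow\bigl((-)\multimap T(-)\bigr)$, and reads off the natural isomorphism $\catL(A,B\otimes C)\cong \Sigma_{f\in\Set(\terminal,A\multimap TB)}\catL\bigl(\abstrcompldom(A,B,f),C\bigr)$ from the $\Sigma$-tractability isomorphism of $\catL$ together with $\Set(\terminal,A\multimap TB)\cong\catL(A,TB)$. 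Hence $\catL$ is $\Sigma,\Set$-tractable. There is no real obstacle here; the corollary is a pure specialisation, recorded separately only because $\Set$ is by far the most common choice of base $\catC$ in the examples that follow.
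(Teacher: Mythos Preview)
Your proposal is correct and takes essentially the same approach as the paper: the corollary is stated immediately after the Lemma without separate proof, and your argument (specialise to $\catC=\Set$, observe $\Set(\terminal,-)\cong\id_{\Set}$ so the factorisation hypothesis is automatic) is exactly the intended justification.
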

We will be most interested in such $\Sigma$-tractable monoidal structures as they give us the vast majority of our examples of $\Sigma,\catC$-tractable monoidal structures.

Given a $\Sigma$-tractable monoidal structure, observe that we have a natural transformation $\catL(A, B\otimes C)\to \catL(A, TB)$, hence, by the Yoneda lemma, a natural transformation $B\otimes C\to TB$.
\begin{lemma}
In case $\catL$ has a terminal object $\terminal$ and a $\Sigma$-tractable monoidal structure, then $T\cong(-)\otimes \terminal$.
\end{lemma}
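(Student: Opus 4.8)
The plan is to read the claim off the defining natural isomorphism of the $\Sigma$-tractable structure by specialising to $C = \terminal$ and then invoking the Yoneda lemma. First I would instantiate
\[
\catL(A, B \otimes C) \;\cong\; \Sigma_{f \in \catL(A, TB)} \catL\bigl(\abstrcompldom(A, B, f), C\bigr)
\]
at $C = \terminal$. Since this isomorphism is natural in all three arguments, holding $C$ fixed at $\terminal$ yields a bijection
\[
\catL(A, B \otimes \terminal) \;\cong\; \Sigma_{f \in \catL(A, TB)} \catL\bigl(\abstrcompldom(A, B, f), \terminal\bigr)
\]
natural in $(A, B) \in \catL^{\op} \times \catL$.

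Next I would exploit terminality of $\terminal$: each hom-set $\catL(\abstrcompldom(A, B, f), \terminal)$ is a singleton, so the projection $(f, g) \mapsto f$ exhibits a bijection $\Sigma_{f \in \catL(A, TB)} \catL(\abstrcompldom(A, B, f), \terminal) \cong \catL(A, TB)$. This projection is natural in both $A$ and $B$, the naturality in the $B$-coordinate being automatic because a morphism into $\terminal$ is uniquely determined and hence cannot interfere with reindexing. Composing the two displays gives a natural isomorphism $\catL\bigl(-, (-) \otimes \terminal\bigr) \cong \catL\bigl(-, T(-)\bigr)$ of functors $\catL^{\op} \times \catL \to \Set$. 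Fixing $B$ and applying the Yoneda lemma in the variable $A$ then produces an isomorphism $B \otimes \terminal \cong TB$ in $\catL$, and naturality in $B$ of this family follows from the $B$-naturality of the underlying hom-set isomorphism (the parametrised form of Yoneda). Hence $T \cong (-) \otimes \terminal$, as required.

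I do not expect a serious obstacle here: the whole argument is essentially a one-line Yoneda computation once $C$ is set to~$\terminal$. The only point deserving a moment's care is that the ``sum of singletons collapses'' step is natural not merely in $A$ but also in $B$, which is immediate from terminality, since the second components of the summed pairs are forced and therefore cannot obstruct reindexing along morphisms of~$B$.
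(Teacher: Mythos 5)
Your argument is correct and is essentially the paper's own proof: set $C=\terminal$ in the defining isomorphism, note the hom-sets into $\terminal$ are singletons so the sum collapses to $\catL(A,TB)$, and conclude by Yoneda (the paper phrases it as full faithfulness of the Yoneda embedding). Your extra remarks on naturality in $B$ just make explicit what the paper leaves implicit.
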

\begin{proof}
Take $C=\terminal$ in the definition of $\Sigma$-tractable monoidal structure: $$\catL(A, B\otimes \terminal)\cong \Sigma f\in \catL(A,TB).\catL(\abstrcompldom (A, B, f), \terminal)\cong  \Sigma f\in \catL(A,TB).\terminal\cong \catL(A,TB).$$ Then use that the Yoneda embedding is fully faithful.
\end{proof}

The basic example of $\Sigma$-(co)tractable monoidal structures are given by products and coproducts.
\begin{example}[Products and coproducts]\label{ex:prodcoprod}
A cartesian (resp., cocartesian) monoidal structure on $\catL$ is always $\Sigma$-tractable (resp., $\Sigma$-cotractable) with $T=\id_\catL$ and $\abstrcompldom (A,B,f)=A$.
\end{example}
In Section \ref{sec:sigma-tractable-coproducts}, we study examples of categories for which the coproducts are $\Sigma$-tractable or the products are $\Sigma$-cotractable, which is not always guaranteed.

\begin{proposition}[Monoidal closure]\label{ex:monoidal-closure}
	Suppose that $\catL$ carries a left closed monoidal structure.
	Then $\catL$ is $\Sigma,\catC$-cotractable for any category $\catC$ with a terminal object.
	Indeed:
	\begin{itemize}
		\item we take $T(-)\multimap (-)\colon \catL^{op}\times \catL \to \catC$ to be the constant functor $\Delta_\terminal$;
		\item we define $\abstrcompldom=(\pi_1\leftmultimap \pi_2)\colon \terminal\downarrow \Delta_\terminal \cong (\catL^{op}\times\catL)\to \catL$ by
		\[
		(B,A,\id_\terminal)\longmapsto (B\leftmultimap A);
		\]
		\item there is a natural isomorphism
		\[
		\catL(B\otimes C, A)
		\cong \catL(C, B\leftmultimap A)
		\cong \Sigma_{f\in \catC(\terminal,\terminal)}\, \catL(C, (\pi_1\leftmultimap\pi_2)(B,A,f)),
		\]
		since $B\leftmultimap (-)$ is right adjoint to $B\otimes (-)$.
	\end{itemize}
	
	If, in addition, $\catL$ has an initial object $\initial$, we may take $T\colon \catL\to \catL$ to be the constant functor $\Delta_\initial$, so that $T(-)\multimap (-)=\Delta_\terminal$.
	In this case $\catL$ is $\Sigma$-cotractable.
\end{proposition}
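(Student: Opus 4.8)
The plan is to unwind the definition of $\Sigma,\catC$-cotractability — which, by definition, is $\Sigma,\catC$-tractability of $(\catL^{\op}, I, \otimes^{\op}, a^{-1}, l^{-1}, r^{-1})$ — and to recognise that the only non-formal ingredient needed is the tensor--hom adjunction of the left closed structure on $\catL$; every indexing $\Sigma$-sum appearing in the definition will collapse because $\terminal$ is terminal in $\catC$.

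First I would fix the three pieces of data exactly as in the statement. For the functor into $\catC$, take the constant functor $\Delta_{\terminal} \colon \catL^{\op}\times\catL \to \catC$ at the terminal object; there is nothing to verify beyond functoriality of a constant functor. Since this functor is constant and $\terminal$ is terminal, $\catC(\terminal,\terminal)$ is a singleton, so the comma category $\terminal \downarrow \Delta_{\terminal}$ is isomorphic to $\catL^{\op}\times\catL$ — an object being just a pair $(B,A)$, the unique map $\terminal\to\terminal$ carrying no data, and similarly for morphisms. Under this identification, set $\abstrcompldom$ to be the internal-hom functor $(-)\leftmultimap(-)$ of the left closed structure, $(B,A)\mapsto B\leftmultimap A$; this is a genuine functor precisely because $\catL$ is left closed. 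I would also record, once, the variances of all functors involved, and check that $\abstrcompldom$ takes values in the correct opposite category with the correct variance — a purely formal point.

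It then remains to produce the required natural isomorphism. Left closure of $\catL$ says that $B\otimes(-)$ is left adjoint to $B\leftmultimap(-)$, giving
\[
\catL(B\otimes C,\ A)\ \cong\ \catL(C,\ B\leftmultimap A),
\]
natural in $B$, $C$ and $A$ — naturality in the parameter $B$ being the standard fact that the internal hom of a closed monoidal category assembles the pointwise adjunctions into a single two-variable adjunction. Composing with the evident isomorphism $\catL(C,\ B\leftmultimap A)\cong\Sigma_{f\in\catC(\terminal,\terminal)}\catL(C,\ B\leftmultimap A)$, valid because the indexing set is a singleton, yields exactly the natural isomorphism demanded for $\Sigma,\catC$-tractability of $\catL^{\op}$, i.e.\ for $\Sigma,\catC$-cotractability of $\catL$.

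For the last assertion, suppose in addition that $\catL$ has an initial object $\initial$. Then one can drop the auxiliary $\catC$ and strengthen to $\Sigma$-cotractability, i.e.\ $\Sigma$-tractability of $\catL^{\op}$: take $T \colon \catL \to \catL$ to be $\Delta_{\initial}$ and keep $\abstrcompldom = (-)\leftmultimap(-)$. The relevant indexing set becomes $\catL^{\op}(A, TB) = \catL^{\op}(A,\initial) = \catL(\initial, A)$, again a singleton because $\initial$ is initial in $\catL$, so the same two moves deliver the natural isomorphism $\catL(B\otimes C, A)\cong\Sigma_{f\in\catL(\initial,A)}\catL(C,\ B\leftmultimap A)$ required by $\Sigma$-tractability of $\catL^{\op}$ (alternatively, the $\catC$-indexed version for an arbitrary $\catC$ with a terminal object follows from the preceding lemma on $\Sigma$-tractable, $\catC$-enriched categories). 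I expect no genuine obstacle: the whole argument is the tensor--hom adjunction plus the observation that a constant diagram into a category with a terminal object contributes a trivial $\Sigma$-sum; the only thing requiring care is the bookkeeping of the various $(-)^{\op}$'s in the definition of cotractability — in particular, checking that $\abstrcompldom$ lands in the correct opposite category with the correct variance and that the opposite monoidal product $\otimes^{\op}$ is handled correctly.
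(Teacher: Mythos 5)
Your proposal is correct and follows essentially the same route as the paper: identify $\terminal\downarrow\Delta_\terminal\cong\catL^{op}\times\catL$ using that $\catC(\terminal,\terminal)$ is a singleton, take $\abstrcompldom=(-)\leftmultimap(-)$, and observe that the required isomorphism is exactly the tensor--hom adjunction of the left closed structure, with the same collapse of the indexing set (via $\catL(\initial,A)$ being a singleton) handling the $\Sigma$-cotractable refinement. The only caveat is your parenthetical alternative via the "$\Sigma$-tractable, $\catC$-enriched" lemma, which would need the unassumed factorisation of $\catL(-,-)$ through $\catC(\terminal,-)$; since your direct argument does not use it, this does not affect correctness.
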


\begin{example}
	Typical examples of categories $\catL$ satisfying Proposition~\ref{ex:monoidal-closure} include any cartesian closed category, such as $\Set$, $\Pos$, or $\Cat$.
	
	A further broad class of examples arises from Eilenberg–Moore categories $\algebras{S}{\catC}$ for a commutative monad $S$ on a symmetric monoidal closed category $\catC$ with equalisers and coequalizers.
	The induced symmetric monoidal structure on $\algebras{S}{\catC}$ is described in~\cite{keigher1978symmetric}, while the closed structure is due to~\cite{kock1971closed};
	a concise modern account is given in~\cite[Theorem~2.3.3]{vakar2017search}.
	
	Other standard sources of monoidal closed categories—hence of $\Sigma$-tractable monoidal structures—are categories of $\catV$-enriched presheaves $\catV\text{-}\CAT(\catC^{op},\catV)$, equipped with the Day convolution~\cite{day1970closed}.
\end{example}

\begin{example}[Product categories]\label{ex:product-categories}
Observe that any product $\bigsqcap_{i\in I}\catV_i$ of categories $\catV_i$ with $\Sigma$-tractable (resp., $\Sigma$-cotractable) monoidal structures has a $\Sigma$-tractable  (resp., $\Sigma$-cotractable) monoidal structure.
Similarly, if $\catC$ has products, then any product $\bigsqcap_{i\in I}\catV_i$ of categories $\catV_i$ with $\Sigma,\catC$-tractable (resp., $\Sigma,\catC$-cotractable) monoidal structures has a $\Sigma,\catC$-tractable  (resp., $\Sigma,\catC$-cotractable) monoidal structure.
\end{example}

\subsection{$\Sigma$-tractable Coproducts and $\Sigma$-Cotractable Products}\label{sec:sigma-tractable-coproducts}
Recall that coproducts $B\sqcup C$ in a category $\catL$ (like all colimits and left adjoints) are defined via a mapping-out
property: morphisms $B\sqcup C\xto{a} A$ \emph{out of} the coproduct are easy to analyse.
Such $a$ always correspond precisely to pairs of $B\xto{a_B}A$ and $C\xto{a_C}A$.
Put differently, we have a natural isomorphism
$$
\catL(B\sqcup C,A)\cong \catL(B,A)\times\catL(C,A).
$$
We can convert between both representations using coprojections and copairing.
Indeed, that is precisely the universal property of coproducts.
In particular, coproducts are always $\Sigma$-cotractable.

Problematically, however, we might not have any tools for analysing morphisms $A\to B\sqcup C$ \emph{into} a coproduct.
To be able to say anything about such morphisms, we need to impose extra axioms.
The same goes for analysing morphisms $B\times C\to A$ out of a product.

Interestingly, large classes of coproducts (resp., products) we encounter in practice are $\Sigma$-tractable (resp., $\Sigma$-cotractable). 
We give some important classes of examples.
\begin{example}[Biproducts]\label{ex:biprod}
Suppose that $\catL$ has binary products and binary coproducts that coincide (for example, because $\catL$ has biproducts/has finite products and is $\CMon$-enriched).
Then, these are $\Sigma$-tractable coproducts, and, by duality, $\Sigma$-cotractable products, by Example \ref{ex:prodcoprod}.
\end{example}
Concrete examples are the categories $\CMon$ of commutative monoids and homomorphisms and $\Vect$ of vector spaces and linear functions.

\begin{example}[Cartesian closure] \label{ex:ccc}
By Proposition~\ref{ex:monoidal-closure}, products are $\Sigma,\catC$-cotractable in a cartesian closed category $\catL$
(for any $\catC$ with a terminal object).
Further, they are $\Sigma$-cotractable if $\catL$ additionally has an initial object.
\end{example}

\begin{example}[Extensive category] \label{ex:extterm}
Recall that a (finitely) extensive category $\catL$ is a category with finite coproducts such that 
$$
(-)\sqcup(-):\catL/B\times \catL/C\to \catL/(B\sqcup C)
$$
defines an equivalence.
(That is, if the codomain fibration is extensive.)
As a consequence, the equivalence inverse is given by the pullbacks $g\mapsto (\iota_1^*g, \iota_2^*g)$.

Assuming that $\catL $ is extensive, let us write $\partial g\rightarrowtail A\leftarrowtail \compldom g$ for the coproduct diagram that is obtained as the pullback along $g:A\to B\sqcup C$  of the coproduct diagram $B\rightarrowtail B\sqcup C\leftarrowtail C$.

Then, if $\catL$ further has a terminal object, its coproducts are $\Sigma$-tractable:
\begin{itemize}
    \item we have coproducts by assumption;
    \item we take $T=(-)\sqcup \terminal:\catL\to\catL$ to be the functor that takes the coproduct with the terminal object;
    \item we take $\abstrcompldom:\catL\downarrow (-)\sqcup\terminal\to\catL$ to be the  functor that takes $(A, B, f:A\to B\sqcup \terminal)$ to the pullback $\compldom f$;
    \item we have the natural isomorphism 
    \begin{align*}
    \catL(A,B\sqcup C)&&&\cong  &
    \Sigma f\in \catL(A, B\sqcup \terminal).
    \catL(\compldom f, C)\\
    g &&& \mapsto & ((B\sqcup !_C)\circ g, \iota_2^* g)\\
    [\iota_1\circ (\iota_1^* f),\iota_2\circ f'] &&&\mapsfrom& (f,f').
    \end{align*}
    because in the following diagram all commutative rectangles are pullbacks and all horizontal and diagonal arrows are coproduct inclusions\\
\begin{center} 
\begin{tikzcd}
                                                      & \partial g\sqcup \compldom g \arrow[d, no head, equal] &                                                         \\
\partial g \arrow[r, tail] \arrow[d,"\iota_1^*g"] \arrow[ru, tail, "\iota_1"] & A \arrow[d, "g={[\iota_1^*g, \iota_2^* g]} "]                                             & \partial^c g \arrow[l, tail] \arrow[d,"\iota_2^*g"] \arrow[lu, tail, "\iota_2"] \\
B \arrow[r, tail, "\iota_1"] \arrow[d, no head, equal]      & B\sqcup C \arrow[d, "B\sqcup !_C"]                           & C \arrow[l, tail, "\iota_2"]       \arrow[d, "!_C"]                                \\
B \arrow[r, tail, "\iota_1"]                                     & B\sqcup \terminal                                                    &    \arrow[l, tail, "\iota_2"]         \terminal                                             
\end{tikzcd}
\end{center} 
\end{itemize}
\end{example}
Some concrete examples are the categories $\Set$ of sets and functions and $\CTop$ of topological spaces and continuous functions.

\begin{example}[Free coproduct completions]\label{ex:fam}
Consider the free coproduct completion $\Fam{\catC}$ of $\catC$.
Recall that $\Fam{\catC}$ has objects that are a pair of a set $I$ and an $I$-indexed family $\sumfam{C_i}{i\in I}$ of $\catC$-objects $C_i$.
The homset $\Fam{\catC}(\sumfam{C_i}{i\in I},\sumfam{C'_{i'}}{i'\in I'})$ is $$\Pi_{i\in I}\Sigma_{i'\in I'}\catC(C_i,C'_{i'}).$$

In case $\catC$ has a terminal object, then $\Fam{\catC}$ is an extensive category with a terminal object, 
so by Example \ref{ex:extterm}, it is has $\Sigma$-tractable coproducts.
However, even if $\catC$ does not have a terminal object, $\Fam{\catC}$ always has $\Sigma,\Set$-tractable coproducts.
Indeed, while we cannot define the monad $(-)\sqcup \terminal$ on $\Fam{\catC}$, unless $\catC$ has a terminal object, we can always define the functor 
$$(-)\multimap (-)\sqcup \terminal: \Fam{\catC}^{op}\times \Fam{\catC}\to \Set $$
by 
$$(\sumfam{C'_{i'}}{i'\in I'},\sumfam{C_i}{i\in I})\mapsto \Pi_{i'\in I'}\Sigma_{i\in I\sqcup \{\bot\}}\catC(C_i, C'_{i'})\text{ if $i\neq\bot$ else } \{\bot\}.$$
Further, we can define 
$$\abstrcompldom( \sumfam{C'_{i'}}{i'\in I'},\sumfam{C_i}{i\in I}, f)=\sumfam{C'_{i'}}{i'\in I', f(i')=\langle \bot,\bot\rangle}.$$
Then,
\small 
\begin{align*}
&\Fam{\catC}(
\sumfam{C''_{i''}}{i''\in I''},\sumfam{C'_{i'}}{i'\in I'}\sqcup\sumfam{C_i}{i\in I} ) \cong\\
& \Fam{\catC}(
\sumfam{C''_{i''}}{i''\in I''},\sumfam{\begin{array}{ll}C'_{k}& \text{if $k\in I'$}\\C_{k}& \text{if $k\in I$}\end{array}}{k\in I'\sqcup I} )=\\
&\Pi_{i''\in I''}\Sigma_{k\in I'\sqcup I} \catC\left(C''_{i''}, \begin{array}{ll}C'_{k}& \text{if $k\in I'$}\\C_{k}& \text{if $k\in I$}\end{array}\right)\cong \\
&\Sigma_{f\in \Pi_{i''\in I''}\Sigma_{k\in I'\sqcup \{\bot\}}\catC(C''_{i''}, C'_{i'})\text{ if $k\neq \bot$ else }\{\bot\}}\Pi_{i''\in I'', f(i'')=\langle \bot,\bot\rangle}\Sigma_{i\in I}\catC(C''_{i''}, C_i)=\\
&\Sigma f\in \Set(\terminal,\sumfam{C''_{i''}}{i''\in I''}\multimap
\sumfam{C'_{i'}}{i'\in I'}\sqcup \terminal 
). \Fam{\catC}( \abstrcompldom(\sumfam{C''_{i''}}{i''\in I''},\sumfam{C'_{i'}}{i'\in I'},f),\sumfam{C_{i}}{i\in I}).
\end{align*}\normalsize 
\end{example}
By duality, products are always $\Sigma,\Set$-cotractable in a free product completion $\Fam{\catC^{op}}^{op}$ of $\catC$.

\begin{example}[Product categories]
Specialising Example \ref{ex:product-categories},
observe that any product of categories with $\Sigma$-tractable coproducts has $\Sigma$-tractable coproducts.
This gives us examples of $\Sigma$-tractable coproducts that do not arise from our Examples \ref{ex:biprod}, \ref{ex:ccc}, and \ref{ex:extterm}, like 
$\Set^{op}\times \Set$, which has $\Sigma$-tractable coproducts, but does not have biproducts (as $\Set$ does not have biproducts), is not (co)-cartesian (co)-closed (as $\Set^{op}$ is not cartesian closed), and is not extensive (as coproducts in $\Set^{op}$ are not disjoint). 
By a similar argument (as a self-dual category) $\Set^{op}\times\Set$ has $\Sigma$-cotractable products.  
\end{example}

\begin{example}[Partial functions] \label{ex:pset}
Consider the category $\pSet$ of sets and partial functions.
The coproduct $S\sqcup S'$ is the usual disjoint union of sets while the product $S\times_p S'$ is given by $S\times S'\sqcup S \sqcup S'$, where we write $\times_p$ for the product in $\pSet$ and $\times $ for the usual product in $\Set$.
Obviously, $\pSet$ does not have biproducts.
Clearly, $\pSet$ is not distributive hence not extensive:
\begin{align*}
X \times_p (Y \sqcup  Z)
&\cong
X \times (Y \sqcup  Z) \sqcup  X \sqcup  (Y \sqcup  Z) 
\\& \cong
X \times Y \sqcup  X \times Z \sqcup  X \sqcup  Y \sqcup  Z
\\&\not \cong
X \times Y \sqcup  X \times Z \sqcup  X \sqcup  Y \sqcup  Z \sqcup  X
\\&\cong 
X \times Y \sqcup  X \sqcup  Y \sqcup  X \times Z \sqcup  X \sqcup  Z
\cong
X \times_p Y \sqcup  X \times_p Z.
\end{align*}
Moreover, $\pSet$ with the cocartesian monoidal structure is not monoidal coclosed as $X\sqcup (-)$ does not preserve products:
\begin{align*}
X \sqcup  (Y \times_p Z)
&\cong
X \sqcup  Y \times  Z \sqcup  Y \sqcup  Z
\\&\not\cong
X \sqcup  Y \times  Z \sqcup  Y \sqcup  Z \sqcup  X \times  X \sqcup  X \times  Z \sqcup  Y \times  X \sqcup  X
\\&\cong
X \times  X \sqcup  X \times  Z \sqcup  Y \times  X \sqcup  Y \times  Z \sqcup  X \sqcup  Y \sqcup  X \sqcup  Z
\\&\cong
(X \sqcup  Y) \times  (X \sqcup  Z) \sqcup  X \sqcup  Y \sqcup  X \sqcup  Z
\\&\cong
(X \sqcup  Y) \times_p (X \sqcup  Z)
\end{align*}
However, $\pSet$ does have $\Sigma$-tractable coproducts, for $T=\id$ and $\abstrcompldom(A, B, f)=A\setminus f^{-1}(B)$. Indeed,
$$\pSet(A, B\sqcup C) \cong \Sigma f \in \pSet(A, B). \pSet(A \setminus f^{-1}(B), C).$$
This shows that the cocartesian structure on $\pSet$ is a $\Sigma$-tractable coproduct that does not arise from our Examples \ref{ex:biprod}, \ref{ex:ccc}, and \ref{ex:extterm}.
\end{example}

\begin{example}[$\Sigma$-tractable posets] \label{ex:posets}
Let $X$ be a poset with $\Sigma$-tractable coproducts.
Observe that we have a natural transformation $X(x, y\vee z)\to X(x, Ty)$.
Therefore, by the full and faithfulness of the Yoneda embedding, we get a morphism 
$y\vee z\leq Ty$ and, in particular, a morphism $z\leq Ty$.
We see that $Ty$ is the terminal object $\top$ of $X$.
Therefore, the condition for $\Sigma$-tractability is that 
\begin{align*}
X(x,y\vee z)&\cong  \Sigma f\in X(x,Ty).X(\abstrcompldom(x,y,f),z)\cong X(x,Ty)\times X(\abstrcompldom(x,y),z) \\
&\cong X(x,\top)\times X(\abstrcompldom(x,y),z)\cong 1\times  X(\abstrcompldom(x,y),z) \cong  X(\abstrcompldom(x,y),z).
\end{align*}
That is, $X$ having finite coproducts that are $\Sigma$-tractable is equivalent to $X^{op}$ being cartesian closed with an initial object (Example \ref{ex:ccc}).

As an aside, note that posets with biproducts are trivial ($a\wedge b\leq a,b\leq a\vee b$ implies that $a\wedge b=a\vee b$ iff $a=b$) and extensive posets are trivial (extensive posets are distributive lattices, by definition, and disjointness of coproducts implies that $a\wedge b=\bot$ if $a\neq b$; in particular if $a<b$, $a=a\wedge b=\bot$).
\end{example}

\begin{counterexample}[Non-distributive lattices]
From Example \ref{ex:posets}, we see that for any non-distributive lattice $X$ (the typical examples being $M_3$ and $N_5$), $X^{op}$ has coproducts that are not $\Sigma$-tractable. 
To make this counterexample very concrete, consider the lattice $M_3$:\\
\begin{center} 
\begin{tikzcd}
    & \top & \\
    a \arrow[ru] & b \arrow[u] & c \arrow[lu] \\
    & \bot\arrow[lu]\arrow[u]\arrow[ru] &
\end{tikzcd}\\
\end{center} 
Then, $X \cong X^{op}$ is not distributive as $a\wedge(b\vee c) = a \wedge \top = a$ while $(a\wedge b)\vee (a\wedge c)= \bot\vee \bot=\bot$.
In particular,  $X \cong X^{op}$ is not cartesian closed (not a Heyting algebra), as $a\wedge (-)$ does not preserve coproducts.
Therefore, the coproducts in $X$ are not all $\Sigma$-tractable.
\end{counterexample}

\section{A Dialectica-like Formula for the Monoidal Closure of Grothendieck Constructions}\label{sec:exponentials-in-grothendieck}

We now turn to our main result: sufficient conditions ensuring the existence of a monoidal closed structure on a Grothendieck construction.
The resulting formula for the closed structure generalises G\"odel's Dialectica construction~\cite{godel1958bisher}.
Its formulation requires certain dependent type-theoretic primitives—namely $\Sigma$- and $\Pi$-types—to express.
While these notions are familiar from type theory and proof theory, their categorical expression is necessarily more elaborate.
We therefore begin by recalling the relevant definitions and terminology.

\subsection{Sufficient conditions for the monoidal closure of $\Sigma_\catC \catL$}\label{sec:sufficient-conditions}

In what follows we make systematic use of the language of dependent type theory.
For this reason, we first recall the categorical structure underlying models of dependent type theory with $\Pi$- and strong $\Sigma$-types, following Jacobs~\cite{jacobs1999categorical} and Vákár~\cite{vakar2017search}.

\paragraph{\textbf{A model of (cartesian) dependent type theory $\catC' \colon \catC^{op}\to\CAT$.}}
Let $\catC' \colon \catC^{op}\to\CAT$ be a model of (cartesian) dependent type theory with $\Pi$-types and strong $\Sigma$-types.
That is, $\catC'$ is an indexed category satisfying  comprehension in the sense of~\cite[Definition~2.1.4]{vakar2017search}, or equivalently a cloven  comprehension category (with unit) in the sense of~\cite[Definitions~10.4.2,~10.4.7]{jacobs1999categorical}, equipped with $\Pi$-types, terminal types, and strong $\Sigma$-types (see~\cite[Theorem~2.1.7]{vakar2017search} or~\cite[Definitions~10.5.1,~10.5.2(i)]{jacobs1999categorical}).

For completeness, we spell out the relevant data.

\begin{definition}[Model of dependent type theory with $\Pi$-types and strong $\Sigma$-types]
	A model of dependent type theory consists of the following data:
	\begin{itemize}
		\item An indexed category $\catC' \colon \catC^{op}\to\CAT$ over a base category $\catC$ with a terminal object $\terminal$.
		\item \emph{Indexed terminal objects}, given by a right adjoint $\terminal \colon \catC \to \Sigma_\catC \catC'$ to the projection $\pi_1 \colon \Sigma_\catC \catC' \to \catC$.
		\item Comprehension: a further right adjoint $(\_.\_)\colon \Sigma_\catC \catC' \to \catC$ to $\terminal$; we write
\[
\mathbf{p}_{\_}\colon \Sigma_\catC \catC' \longrightarrow \catC^{\to}, \qquad 
\mathbf{p}_{W,w} \defeq \pi_1(\epsilon_{(W,w)}),
\]
where $\epsilon$ is the counit of $\terminal \dashv (-.-)$..
		\item \emph{Strong $\Sigma$-types}, that is, left adjoints $\Sigma_w\colon \catC'(W.w)\to\catC'(W)$ to $\catC'(\depproj{W}{w})$, satisfying the left Beck–Chevalley condition: the canonical natural transformations
		\[
		\Sigma_{\catC'(f)(w)}\circ \catC'(\mathbf{q}_{f,w})
		\longrightarrow 
		\catC'(f)\circ \Sigma_w
		\]
		are isomorphisms, where $\mathbf{q}_{f,w}$ is the unique morphism making the following square a pullback:
		\begin{center}
			\begin{tikzcd}
				W'.\catC'(f)(w) \arrow[r, "{\mathbf{q}_{f,w}}"] \arrow[d, "{\mathbf{p}_{W',\catC'(f)(w)}}"'] & W.w \arrow[d, "{\mathbf{p}_{W,w}}"] \\
				W' \arrow[r, "f"'] & W
			\end{tikzcd}
		\end{center}
		Moreover, there is a canonical isomorphism $\depproj{W}{w}\circ \depproj{W.w}{w'} \cong \depproj{W}{\Sigma_w w'}$.
		In particular, $\catC'$ has indexed binary products, given by
		\[
		w\times w' \;\defeq\; \Sigma_w \catC'(\mathbf{p}_{W,w})(w')
		\qquad \text{for } w,w'\in\ob\catC'(W).
		\]
		For our purposes, the weaker assumption of \emph{non-dependent} $\Sigma$-types—left adjoints to $\catC'(\pi_1)$ for $\pi_1\colon W\times W'\to W$—suffices.
		\item \emph{$\Pi$-types}, defined as $\Sigma$-types in $\catC'{}^{op}$; that is, right adjoints $\Pi_w\colon \catC'(W.w)\to\catC'(W)$ to $\catC'(\depproj{W}{w})$, satisfying the right Beck–Chevalley condition: the canonical natural transformations
		\[
		\catC'(f)\circ \Pi_w
		\longrightarrow 
		\Pi_{\catC'(f)(w)} \circ \catC'(\mathbf{q}_{f,w})
		\]
		are isomorphisms.
		In particular, $\catC'$ has indexed exponentials given by
\[
w\Rightarrow w' \;\defeq\; \Pi_w \catC'(\mathbf{p}_{W,w})(w')
\qquad \text{for } w,w'\in\ob\catC'(W).
\]
Throughout, we freely use the bijections $\catC'(W)(\terminal,z)\cong \catC/W(\id_W,\mathbf{p}_{W,z})$ coming from the adjunction $\terminal \dashv (-.-)$; no fullness or faithfulness of $\mathbf{p}$ is required.
		For our applications, the weaker assumption of \emph{non-indexed} $\Pi$-types—right adjoints $\Pi_w\colon \catC'(\terminal.w)\to\catC'(\terminal)$ to $\catC'(\depproj{\terminal}{w})$—is sufficient.
	\end{itemize}
\end{definition}

\begin{example}[Families of sets]\label{ex:dtt-fam}
	Let $\catC=\Set$ and $\catC'(S)\defeq \CAT(S,\Set)$.
	Then $\Sigma_\catC \catC' = \Fam{\Set}$, the category of families of sets (the free coproduct completion of $\Set$).
	The comprehension functor $(-.-)\colon \Fam{\Set}\to \Set$ is given by disjoint union.
	Strong $\Sigma$-types correspond to disjoint unions, and $\Pi$-types to products.
	See~\cite{hofmann1997syntax} for details.
\end{example}

\begin{example}[Continuous families of $\omega$-cpos]\label{ex:dtt-wfam}
	Let $\catC=\wCpo$ be the category of $\omega$-cocomplete partial orders and $\omega$-cocontinuous maps, and define
	\[
	\catC'(X)=\mathbf{\omega ContFunc}(X, \wCpo_{ep}),
	\]
	the category of $\omega$-cocontinuous functors from $X$ into the category of $\omega$-cpos and embedding–projection pairs, with lax natural transformations.
	This yields a model of $\omega$-continuous families of $\omega$-cpos; see, for example,~\cite{palmgren1990domain,ahman2016dependent}.
\end{example}

\begin{example}[Locally cartesian closed categories]\label{ex:dtt-lccc}
	Another fundamental source of examples is given by codomain fibrations $\mathrm{cod}\colon \catC^{\to}\to\catC$ (with a chosen cleavage) over locally cartesian closed categories~\cite{seely1984locally,clairambault2014biequivalence}.
	In this case we take $\catC'(C)=\catC/C$.
	  The  comprehension $(-.-)\colon \catC^{\to}\to\catC$ is given by the domain functor $\mathrm{dom}\colon \catC^{\to}\to\catC$.
	Strong $\Sigma$-types correspond to composition, and $\Pi$-types to the right adjoints of pullback functors.
\end{example}

\begin{example}[Product self-indexing] \label{ex:dtt-locally-indexed}
Given a cartesian closed category $\catC$, we can form the locally indexed category $\self(\catC):\catC^{op}\to\CAT$ (see Example \ref{ex:locally-indexed}).
Then, $\self(\catC)$ is a model of dependent type theory.
Indeed, the comprehension $(.-.):\Sigma_\catC \self(\catC)\to \catC$
is defined as $(C, C')\mapsto C\times C'$ on objects and 
$(f:C_1\to C_2, g: C_1\times C'_1\to C'_2)\mapsto (\langle f\circ \pi_1, g \rangle : C_1\times C'_1\to C_2\times C'_2)$ on morphisms.
Strong $\Sigma$-types are defined as $\Sigma_C C'=C\times C$ and $\Pi$-types are defined as $\Pi_C C'=C\Rightarrow C'$.
\end{example}
\begin{example}[Indexed category of indexed categories] \label{ex:dtt-indexed-cats}
    This example categorifies the families construction of Example \ref{ex:dtt-fam} and replaces $\Set$ with $\Cat$ and $\CAT$ with $\twoCAT$.
    We have a model of dependent type theory: $\catC=\Cat$ and $\catC'(\catC)=\twoCAT(\catC^{op},\Cat)_{oplax}$ is the category of (strict) $\catC$-indexed categories and oplax natural transformations.
    This indexed category satisfies the  comprehension axiom with $(-.-):\Sigma_{\Cat}\twoCAT(\catC^{op},\Cat)_{oplax}\to \Cat$ given by the Grothendieck construction \cite{north2019towards}.
    It has strong $\Sigma$-types $\Sigma_\catC \catL$ given by the oplax colimit, which exists as a functor $\Sigma_\catD:\twoCAT((\catC.\catD)^{op},\Cat)_{oplax}\to \twoCAT(\catC^{op},\Cat)_{oplax} $ and is precisely the Grothendieck construction (as should be clear from Proposition \ref{prop:2equivalence}; see \cite{gray1974quasi} for the original reference and details -- note that Gray calls these quasi-(co)limits).
    It also has non-parameterised $\Pi$-types $\Pi_\catC \catL$ given by the oplax limit, which exists as a functor $\Pi_\catD:\twoCAT(\catD^{op},\Cat)_{oplax}\cong\twoCAT((\terminal.\catD)^{op},\Cat)_{oplax}\to \twoCAT(\terminal^{op},\Cat)_{oplax} \cong \Cat$ and is given by the category of sections of the Grothendieck construction 
    (i.e., functors $F:\catC\to\Sigma_\catC \catL$ such that $\pi_1 F=\id[\catC]$ and natural transformations $\alpha:F\to G$ such that $\pi_1 \alpha = \id[{\id[\catC]}]$) \cite{gray1974quasi}.
\end{example}

\begin{example}[Indexed groupoids]
\cite{hofmann1998groupoid} restricts Example \ref{ex:dtt-indexed-cats} to 
indexed groupoids indexed by another groupoid.
That gives us another model of dependent type theory and it is the starting point for homotopy type theory, where people consider variants of this model based on $\infty$-groupoids rather than 1-groupoids \cite{kapulkin2021simplicial}.
\end{example}
Needless to say, many other models exist, such as ones based on polynomials \cite{glehnmoss2018}.\\

\paragraph{\textbf{A $\Sigma$-cotractable indexed monoidal category $\catL:\catC^{op}\to\CAT$ with $\Pi$-types}}
Further, assume that we have a model $\catL$ of linear dependent type theory \cite[Chapter 2]{vakar2017search} over the same base category $\catC$, with a $\Sigma$-tractable monoidal structure, in the following sense:
\begin{itemize}
    \item an indexed category $\catL:\catC^{op}\to\CAT$;
    \item $\catL$ has $\Pi$-types in the sense of right adjoints $\Pi_w:\catL(W.w)\to \catL(W)$ to $\catL(\depproj{W}{w})$ that satisfy the right Beck--Chevalley condition, i.e., the canonical natural transformations $\catL(f)\circ \Pi_w\to\Pi_{\catC'(f)(w)} \circ\catL(\mathbf{q}_{f,w})$ are an isomorphism;
    in fact, for our purposes, the weaker assumption of \emph{non-dependent} $\Pi$-types in the sense of right adjoint functors to $\catL(\pi_1)$ for (non-dependent) product projections $\pi_1: W\times W'\to W$ suffice;
    \item $\catL$ has an indexed $\Sigma,\catC'$-cotractable monoidal structure\footnote{Observe that this last condition is, in particular, implied by the following pair of conditions that often holds in practice:
    \begin{itemize}
    \item $\catL$ is enriched over $\catC'$, or more weakly, we have $\catL-\multimap$-types in $\catC'$
    in the sense of that we have an indexed functor $(-)\multimap (-):\catL^{op}\times \catL\to \catC'$ and a natural isomorphism 
    $$
    \catL(W)(A, B)\cong \catC'(W)(\terminal, A\multimap B);
    $$
    \item $\catL$ has an indexed $\Sigma$-cotractable monoidal structure in the sense of an indexed monoidal structure on $\catL$ such that on each fibre $\catL(C)$ the monoidal structure is $\Sigma$-cotractable
    and $T$ and $\abstrcompldom$ are $\catC$-indexed functors.
    \end{itemize}} in the sense of an indexed monoidal structure on $\catL$ such that on each fibre $\catL(C)$ the monoidal structure is $\Sigma,\catC'(C)$-cotractable
    and $T(-)\multimap (-)$ and $\abstrcompldom$ are $\catC$-indexed functors.
    \end{itemize}
    For example, the fibre categories of $\catL$ could have $\Sigma$-cotractable monoidal structure because they are monoidal closed with an initial object, because they have biproducts, or because they are co-extensive with an initial object).
\begin{example}[$\catL=\catC'$] Take $\catL=\catC':\catC^{op}\to\CAT$ to be any model of dependent type theory with $\Pi$-types and strong $\Sigma$-types.
Then, $\catL$ is an indexed cartesian closed category and $\catC$ has a terminal object.
By Proposition \ref{ex:monoidal-closure}, products in $\catL$ are $\Sigma,\catC'$-cotractable.
Observe that $\Sigma_\catC\catL=\Sigma_\catC\catC'$.
\end{example}
\begin{example}[$\catL=\catC'{}^{op}$, extensive] \label{ex:ldttt-op}
Take $\catC':\catC^{op}\to\CAT$ to be any model of dependent type theory with $\Pi$-types and strong $\Sigma$-types.
Further, assume that $\catC'$ has indexed coproducts and that the categories $\catC'(C)$ are extensive.
Then, by Example \ref{ex:extterm}, $\catL=\catC'{}^{op}$ has indexed $\Sigma$-cotractable products.
Further, it has $\Pi$-types, given by the $\Sigma$-types of $\catC'$.
Observe that $\Sigma_\catC\catL=\Sigma_\catC\catC'{}^{op}$.
\end{example}
\begin{example}[Coproducts/biproducts] \label{ex:ldttt-biproducts}
Let $\catC':\catC^{op}\to\CAT$ be a model of dependent type theory with $\Pi$-types and strong $\Sigma$-types.
Let $\catL:\catC^{op}\to \CAT$ be any indexed category with finite indexed coproducts, such that the hom-functor of $\catL$ factors over $\catC'$.
(For example, we can take $\catL=\catC'{}^{op}$.)
Seeing that coproducts always form a $\Sigma$-cotractable monoidal structure, it follows that they are a $\Sigma, \catC'$-cotractable monoidal structure.
Observe that the case where $\catL$ has indexed biproducts is of particular interest as, in that case, $\catL$ has $\Sigma,\catC'$-cotractable products.
\end{example}
\begin{example}[Locally indexed categories] \label{ex:ldtt-locally-indexed}
  This Example builds on the choice $\catC'=\self(\catC)$ of Example \ref{ex:dtt-locally-indexed}.
  Suppose that $\catD$ is a $\catC$-enriched category.
  Then, it, in particular, defines a locally $\catC$-indexed category $\catL(C)(D, D')=\catC(C, \catD(D, D'))$.
  If $\catD$ is $\catL$ is $\catC$-powered in the sense that $\catC(C, \catD(D, D'))\cong \catD(D, C\Rightarrow D')$, then $\catL$ has $\Pi$-types: $\Pi_C D=C\Rightarrow D$.
  If $\catD$ has a $\Sigma$-cotractable monoidal structure, then it meets our conditions.
\end{example}
\begin{example}[Dual product self-indexed] \label{ex:dual-prod-self-indexed}
  This Example builds on the choice $\catC'=\self(\catC)$ for a cartesian closed category $\catC$ of Example \ref{ex:dtt-locally-indexed}, and it specialises Example \ref{ex:ldttt-biproducts}.
  Observe that $\self(\catC)^{op}$ is a  (locally) $\catC$-indexed category with indexed coproducts (products in $\catC$). 
  Further, it has $\Pi$-types given by $\Pi_C C'= C\times C'$ products in $\catC$.
  Seeing that coproducts are always $\Sigma$-tractable and seeing that $\self(\catC)^{op}$ is $\self(\catC)$ enriched, it follows that  $\self(\catC)^{op}$ has a $\Sigma, \self(\catC)$-cotractable monoidal structure.
\end{example}
\begin{example}[Families]
Building on the choice of $\catC'$ of Example \ref{ex:dtt-fam},
for any category $\catD$ with a $\Sigma,\Set$-cotractable monoidal structure (for example, $\catD$ monoidal closed, a free product completion, co-extensive with an initial object, or a category with biproducts) and small products, $\catL:\Set^{op}\to\CAT$ with $\catL(S)=\CAT(S, \catD)$ meets our conditions.
The $\Pi$-types are given by products in $\catD$ (see \cite{vakar2015categorical}).

For example, we may take $\catD$ to be a product-complete monoidal closed category such as a category of algebras for a commutative algebraic theory on $\Set$.
Observe that $\Sigma_\catC\catL=\Fam{\catD}$.
\end{example}
\begin{example}[$\omega$-Continuous families]
This Example builds on the choice of $\catC'$ of Example \ref{ex:dtt-wfam}.
Given an $\wCpo$-enriched Lawvere theory, we may take $\catD$ to be its category of algebras in $\wCpo$ and $\catL(X)=\mathbf{\omega ContFunc}(X, \catD_{ep})$ to be the $\wCpo$-indexed category of $\omega$-cocontinuous functors into the category of $\catD$-objects and embedding-projection pairs.
Then, $\catL$ is an indexed monoidal closed category, hence an indexed $\Sigma,\catC'$-cotractable monoidal category.
Details are discussed in \cite[Section 6]{ahman2016dependent}.
Observe that $\Sigma_\catC\catL=\mathbf{\omega Cont}\Fam{\catD}$ is the category $\omega$-continuous families of $\catD$-objects.
\end{example}

\begin{example}[Lextensive locally cartesian closed categories]
This example specializes Examples \ref{ex:dtt-lccc} and \ref{ex:ldttt-op}.
Assume that $\catC$ is a lextensive locally cartesian closed category category (for example, $\catC$ an elementary topos).
Consider the codomain fibration $\catC'(C)=\catC/C$, which we can turn into an indexed category by making use of the axiom of choice to choose pullbacks.
Observe that $\catC/C$ is also lextensive (lextensive categories are locally lextensive \cite[Proposition 4.8]{MR1201048}) with a terminal object hence has $\Sigma$-tractable coproducts.
Define $\catL=\catC'{}^{op}$.
Then, $\catL$ has $\Sigma$-cotractable products and $\Pi$-types ($\Sigma$-types in $\catC'$).
Observe that $\Sigma_\catC\catL=\Sigma_\catC (\catC/-)^{op}$ is a kind of generalised category of polynomials (or containers).
\end{example}

\begin{example}[Lax comma] \label{ex:ldttt-lax-comma}
This Example takes $\catC'$ to be defined as in Example \ref{ex:dtt-indexed-cats}.
Let $\catD$ be some 2-category with oplax limits.
(For example, we already obtain many interesting examples for $\catD$ a 1-category with limits.)
We have a $\Cat$-indexed category $\catL(\catC)=\twoCAT(\catC^{op},\catD)_{oplax}$.
Its non-dependent $\Pi$-types are simply given by oplax limits (ordinary limits, if $\catD$ is a 1-category).
If $\catD$ has a $\Sigma,\Cat$-cotractable monoidal structure (such as a $\Sigma,\Set$-cotractable one), then $\catL$ meets our conditions.
Observe that $\Sigma_\catC\catL=\Cat // \catD$ is the lax comma category of $\catD$ in $\Cat$.
Some important subcases of this example are worked out in more detail in \cite{clementino2024lax}, where the structure of exponentials is presented in terms of ends. 
\end{example}

\subsection{Monoidal Closure of $\Sigma_\catC \catL$}
We can now phrase our main theorem.
\begin{theorem}[Monoidal closure of $\Sigma_\catC\catL$ via a Dialectica formula]\label{theo:grothendieck-closed}
Assuming the conditions of Section \ref{sec:sufficient-conditions},
$\Sigma_\catC\catL$ is monoidal left-closed with 
$$
(X,x)\leftmultimap(Y,y)\defeq 
\big(\Pi_X\Sigma_Y (T x\multimap y),\,\Pi_X \catL(\zeta)(\abstrcompldom{v})\big)
$$
for two morphisms ${v}$ and $\zeta $ that we define below.
By co-duality, we obtain monoidal right-closure if $\catL^{co}$ is $\Sigma,\catC'$-cotractable instead.
\end{theorem}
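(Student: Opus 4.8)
The plan is to prove, for fixed $(X,x)$ and $(Y,y)$, a bijection natural in $(Z,z)\in\Sigma_\catC\catL$
\[
\Sigma_\catC\catL\bigl((X,x)\otimes(Z,z),\,(Y,y)\bigr)\;\cong\;\Sigma_\catC\catL\bigl((Z,z),\,(X,x)\leftmultimap(Y,y)\bigr),
\]
and then conclude by Yoneda: since Theorem~\ref{thm:fibred-monoidal} already supplies the monoidal structure on $\Sigma_\catC\catL$ (note $\catC$ is cartesian, having a terminal object and, via comprehension and strong $\Sigma$-types, finite products), such a natural bijection is exactly a right adjoint to $(X,x)\otimes(-)$, i.e.\ monoidal left-closure with the stated internal hom. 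Here $\otimes$ is computed by $(X,x)\otimes(Z,z)=\bigl(X\times Z,\ \catL(p_1)(x)\otimes\catL(p_2)(z)\bigr)$ (Theorem~\ref{thm:fibred-monoidal}), with $p_1\colon X\times Z\to X$, $p_2\colon X\times Z\to Z$, and $\Pi_X,\Sigma_Y$ the $\Pi$- and $\Sigma$-type operations of $\catC'$ (resp.\ of $\catL$) along such projections; we write $Tx\multimap y\in\catC'(X\times Y)$ for the value of the indexed functor $T(-)\multimap(-)$ at the reindexings of $x$ and $y$ into $\catL(X\times Y)$. Unfolding the left-hand side via the explicit Grothendieck construction (Proposition~\ref{prop:Grothendieck-construction-basic}), one of its elements is a pair of $f'\colon X\times Z\to Y$ in $\catC$ and a linear map $\catL(p_1)(x)\otimes\catL(p_2)(z)\to\catL(f')(y)$ in $\catL(X\times Z)$.

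The first step is to apply $\Sigma,\catC'$-cotractability \emph{in the fibre} $\catL(X\times Z)$, with $B=\catL(p_1)(x)$, $C=\catL(p_2)(z)$, $A=\catL(f')(y)$: this decomposes the linear map into a ``deterministic witness'' $g\colon\terminal\to\bigl(T\,\catL(p_1)(x)\multimap\catL(f')(y)\bigr)$ in $\catC'(X\times Z)$ together with a residual linear map $\catL(p_2)(z)\to\abstrcompldom\bigl(\catL(p_1)(x),\catL(f')(y),g\bigr)$ in $\catL(X\times Z)$ (compatibly with reindexing, since the $\Sigma,\catC'$-cotractable structure is \emph{indexed}). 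Because $T(-)\multimap(-)$ is a $\catC$-indexed functor, $T\,\catL(p_1)(x)\multimap\catL(f')(y)=\catC'((p_1,f'))(Tx\multimap y)$, where $(p_1,f')\colon X\times Z\to X\times Y$. Then, over the \emph{fixed} base $p_1$, the $\catC$-morphism $f'$ and the witness $g$ repackage into a single section $\terminal\to\catC'(p_1)\bigl(\Sigma_Y(Tx\multimap y)\bigr)$ in $\catC'(X\times Z)$; this is precisely what strong $\Sigma$-types and the left Beck--Chevalley condition for $\Sigma_Y$ along the pullback of $p_1$ give. This same move singles out the morphism $v$: it is the residual (witness) component of the generic section of $\Sigma_Y(Tx\multimap y)$ over the comprehension $X.\Sigma_Y(Tx\multimap y)$, so $v\colon\terminal\to\catC'((\mathbf{p},\bar f))(Tx\multimap y)$ with $\mathbf{p}$ the display map onto $X$ and $\bar f$ the $Y$-component of that generic section; and, using $\catC$-indexedness of $\abstrcompldom$, the residual linear map factors as $\catL(p_2)(z)\to\catL(\widehat s)(\abstrcompldom v)$ along the classifying map $\widehat s\colon X\times Z\to X.\Sigma_Y(Tx\multimap y)$ of the repackaged section.

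It remains to curry over the $X$-variable. The residual is a morphism out of $\catL(p_2)(z)$ with $p_2\colon X\times Z\to Z$, so the $\Pi$-type adjunction $\Pi_X\dashv\catL(p_2)$ in $\catL$ transposes it to a morphism $z\to\Pi_X\catL(\widehat s)(\abstrcompldom v)$ in $\catL(Z)$; correspondingly, the repackaged base-section transposes, via the $\Pi$-type adjunction and comprehension in $\catC'$, to a morphism $\bar h\colon Z\to E$ in $\catC$, where $E=\Pi_X\Sigma_Y(Tx\multimap y)$ is the base component of the formula (identified with its comprehension). Taking $\zeta\colon X\times E\to X.\Sigma_Y(Tx\multimap y)$ to be the canonical evaluation map over $X$, the right Beck--Chevalley conditions for $\Pi_X$ in $\catC'$ and in $\catL$ — applied to the pullback square whose horizontal maps are $\bar h$ and $\id[X]\times\bar h$ — identify $\Pi_X\catL(\widehat s)(\abstrcompldom v)$ with $\catL(\bar h)\bigl(\Pi_X\catL(\zeta)(\abstrcompldom v)\bigr)$ and, likewise, match the base data. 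Reassembling, the left-hand hom-set becomes $\Sigma_{\bar h\in\catC(Z,E)}\catL(Z)\bigl(z,\catL(\bar h)\bigl(\Pi_X\catL(\zeta)(\abstrcompldom v)\bigr)\bigr)=\Sigma_\catC\catL\bigl((Z,z),(E,\Pi_X\catL(\zeta)(\abstrcompldom v))\bigr)$, which is exactly $(X,x)\leftmultimap(Y,y)$. The monoidal right-closed statement then follows from the co-duality noted after the statement, by applying the whole argument to $\catL^{\mathrm{co}}$.

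I expect the main obstacle to be the middle step: repackaging $f'$ and the witness $g$ into a single section of a $\Sigma_Y$-type while simultaneously carrying the \emph{intertwined} residual $\catL$-term along (whose value genuinely depends on $g$), and choosing $v$ and $\zeta$ so that the resulting chain of Beck--Chevalley isomorphisms — for $\Sigma_Y$ in $\catC'$ and for $\Pi_X$ in both $\catC'$ and $\catL$ — composes coherently and remains natural in $(Z,z)$. This is precisely where the hypotheses that $\catC'$ has strong $\Sigma$-types, that the $\Pi$-types in $\catC'$ and $\catL$ satisfy right Beck--Chevalley, and that $T(-)\multimap(-)$ and $\abstrcompldom$ are $\catC$-indexed functors are all used essentially; once the pullback squares and adjunction transposes are laid out, each individual isomorphism is routine.
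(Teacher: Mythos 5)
Your proposal is correct and follows essentially the same route as the paper's proof: unfold the hom-set of $\Sigma_\catC\catL$, apply $\Sigma,\catC'$-cotractability in the fibre over $X\times Z$, use indexedness of $T(-)\multimap(-)$ and $\abstrcompldom$ to express the residual as a reindexing of the generic $\abstrcompldom v$, then transpose along $\Pi_X$ in $\catL$ and in $\catC'$ with strong $\Sigma$-types and Beck--Chevalley, exactly as in the paper's chain of natural bijections. The only differences are cosmetic: you repackage $(f',g)$ into a section of $\Sigma_Y(Tx\multimap y)$ before currying over $X$ (the paper does it after), and you describe $v$ and $\zeta$ over $X.\Sigma_Y(Tx\multimap y)$ rather than $X.Y.(Tx\multimap y)$, which strong $\Sigma$-types identify.
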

\begin{proof}
By Theorem \ref{thm:fibred-monoidal}, $(\terminal_\catC,\, I_{\catL(\terminal_\catC)})$ is the monoidal unit of $\Sigma_\catC\catL$ and $(X\times Y,\catL(\pi_1)(x)\otimes\catL(\pi_2)(y))$ is the monoidal product of $(X,x)$ and $(Y,y)$ in $\Sigma_\catC\catL$.

The novel part is the existence of exponentials, which we turn to next.
We have (natural) bijections 
 (where, to aid legibility, we abuse notations a bit by leaving implicit: (1)  some weakening functors $\catL(\depproj{W}{w})$ and $\catC'(\depproj{W}{w})$, (2) the equivalence $\catC'(\terminal)\simeq \catC$, and (3) isomorphisms $X.\Sigma_Y Z\cong X.Y.Z$ where they are obvious from the context):
	\vspace{-2pt}\\
	\resizebox{\linewidth}{!}{\parbox{\linewidth}{
			\begin{align*}
				&\Sigma_{\catC}\catL((X,x)\otimes (W,w), (Y,y)) =\\
				&=\Sigma_{\catC}\catL((X\times W,\catL(\pi_1)(x)\otimes \catL(\pi_2)(w)), (Y,y))\\
				&=\Sigma_{f\in \catC(X\times W, Y)}\catL(X\times W)(\catL(\pi_1)(x)\otimes \catL(\pi_2)(w), \catL(f)(y))\\
      &\cong \Sigma_{f\in \catC(X\times W, Y)}\Sigma_{g\in\catC'(X\times W)(\terminal, T\catL(\pi_1)(x)\multimap \catL(f)(y))}\catL(X\times W)(\catL(\pi_2)(w),\abstrcompldom g)\explainr{$\otimes$ $\Sigma,\catC'$-cotractable}\\
    &\cong \Sigma_{f\in \catC(X\times W, Y)}\Sigma_{g\in\catC'(X\times W)(\terminal,T\catL(\pi_1)(x)\multimap\catL(f)(y))}\catL(X\times W)(\catL(\pi_2)(w),\abstrcompldom  g)\explainr{$\terminal \dashv (-.-)$ (implicit)}\\
 &\cong \Sigma_{(f,g)\in \Sigma_{f\in \catC(X\times W, Y)}\catC'(X\times W)(\terminal,T\catL(\pi_1)(x)\multimap\catL(f)(y))}\catL(X\times W)(\catL(\pi_2)(w),\abstrcompldom  g)  \explainr{$\Sigma$-types in $\Set$}
 \\
 &\cong \Sigma_{(f,g)\in \Sigma_{f\in \catC(X\times W, Y)}\catC'(X\times W)(\terminal,T\catL(\pi_1)(x)\multimap\catL(f)(y))}\catL(W )(w,\Pi_X\abstrcompldom  g)  \explainr{$\Pi$-types in $\catL$} \\
 &=\Sigma_{(f,g)\in \Sigma_{f\in \catC(X\times W, Y)}\catC'(X\times W)(\terminal,Tx\multimap\catL(f)(y))}\catL(W )(w,\Pi_X\abstrcompldom  g)  \explainr{implicit $\catL(\pi_1)$ for legibility}\\
 &=  \Sigma_{(f,g)\in \Sigma_{f\in \catC(X\times W, Y)}\catC'(X\times W)(\terminal ,Tx\multimap\catL(f)(y))}\catL(W)(w,\Pi_X \abstrcompldom\big(\catC'((\pi_1,f,g))(v)\big)) \explainr{definition ${v}$}\\
   &\cong\Sigma_{(f,g)\in \Sigma_{f\in \catC(X\times W, Y)}\catC'(X\times W)(\terminal ,Tx\multimap\catL(f)(y))}\catL(W)(w,\Pi_X \catL((\pi_1,f,g))(\abstrcompldom  {v})) \explainr{$\abstrcompldom$ indexed functor}\\
   &= \Sigma_{(f,g)\in \Sigma_{f\in \catC(X\times W, Y)}\catC'(X\times W)(\terminal ,Tx\multimap\catL(f)(y))}\catL(W)(w,\Pi_X \catL(\zeta \circ (\Lambda(f,g),\pi_1))(\abstrcompldom  {v})) \explainr{definition $\zeta $}\\
   &\cong \Sigma_{(f,g)\in \Sigma_{f\in \catC(X\times W, Y)}\catC'(X\times W)(\terminal ,Tx\multimap\catL(f)(y))}\catL(W)(w,\Pi_X \catL((\Lambda(f,g),\pi_1))(\catL(\zeta )(\abstrcompldom  {v})) )  \explainr{$\catL$ pseudofunctor }
    \\
&\cong \Sigma_{(f,g)\in \Sigma_{f\in \catC(X\times W, Y)}\catC'(X\times W)(\terminal ,Tx\multimap\catL(f)(y))}\catL(W)(w,\catL(\Lambda(f,g))(\Pi_X \catL(\zeta )(\abstrcompldom  {v})) )  \explainr{Beck--Chevalley for $\Pi$}
\\
&\cong \Sigma_{h\in\catC'(X\times W)(\terminal, \Sigma_Y Tx\multimap y)}\catL(W)(w,\catL(\Lambda(h))(\Pi_X \catL(\zeta )(\abstrcompldom  {v})) )  \explainr{strong $\Sigma$-types in $\catC'$}
\\
&\cong \Sigma_{h\in\catC'(X\times W)\big(\catC'(\pi_1)(\terminal), \Sigma_Y Tx\multimap y\big)}\catL(W)(w,\catL(\Lambda(h))(\Pi_X \catL(\zeta )(\abstrcompldom  {v})) )  \explainr{indexed $\terminal$ in $\catC'$}
\\
&\cong \Sigma_{k\in\catC'(W)(\terminal, \Pi_X\Sigma_Y Tx\multimap y)}\catL(W)(w,\catL(k)(\Pi_X \catL(\zeta )(\abstrcompldom  {v})) )  \explainr{$\Pi$-types in $\catC'$}
\\
& \cong  \Sigma_{k\in\catC(W, \Pi_X\Sigma_Y Tx\multimap y)}\catL(W)(w,\catL(k)(\Pi_X \catL(\zeta )(\abstrcompldom  {v})) )   \explainr{$\terminal \dashv (-.-)$}
    \\
& =  \Sigma_\catC\catL\big((W,w),(\Pi_X\Sigma_Y Tx\multimap y,\Pi_X \catL(\zeta )(\abstrcompldom  {v}) )\big).
\end{align*}
}}\\
Here, we have used the obvious morphisms (again leaving weakening / change of base along projections implicit, for legibility):
\begin{align*}
   & v \in \catC'(X.Y.T x\multimap y)(\terminal,Tx \multimap y) \explainr{representing element of the comprehension}\\
   % & \mapsto  \tilde{v} \in \catL(X.Y.Tx\multimap y)(Tx , y)\explainr{$\multimap$-types}\\
   & \mapsto \abstrcompldom {v} \in \catL(X.Y.T x\multimap y) \explainr{$\abstrcompldom:\terminal \downarrow T(-)\multimap (-)\to \catL$}\\ 
   & \mapsto \catL(\zeta )(\abstrcompldom  {v})\in \catL(\Pi_X\Sigma_Y Tx\multimap y.X)\explainr{change of base along $\zeta $}\\
   &  \mapsto \Pi_X \catL(\zeta )(\abstrcompldom  {v})\in \catL(\Pi_X\Sigma_Y Tx\multimap y) \explainr{$\Pi$-types in $\catL$}\\
\end{align*}
and
$$
\zeta \;=\; \bigl(\,\pi_2,\; \ev\circ(\pi_1\times \id),\; \ev\circ(\pi_2\times \id)\,\bigr)
\;:\;
\Pi_X\Sigma_Y Z\,.\,X \longrightarrow X.Y.Z.
$$
\end{proof}

\begin{remark}
It is useful to note that the first component of $(X,x)\multimap (Y,y)$
is isomorphic to 
\[\Sigma_{X\Rightarrow Y}\Pi_X Tx\multimap \catL(\ev)(y).\]
\end{remark}

Observe that any of the Examples from Section \ref{sec:sufficient-conditions} now give us monoidal closed Grothendieck constructions.
We would like to highlight just a few concrete Examples, because they show up a lot in practice.

\begin{example}[Monoidal closure of $\Fam{-}$-constructions]\label{ex:fam-monoidal-closed}
By Proposition \ref{ex:monoidal-closure}, and Examples \ref{ex:biprod}, \ref{ex:extterm}, \ref{ex:fam}, and \ref{ex:pset},
we have that
\begin{itemize}
    \item $\Fam{\catD}$ is monoidal left-closed (resp., right-closed) for any monoidal left-closed (resp., right-closed) category $\catD$; in this case, the monoidal-closed structure on $\Fam{\catD}$ is fibred over the cartesian closed structure on $\Set$:
    $$\sumfam{D_i}{i\in I}\Rightarrow \sumfam{D'_{i'}}{i'\in I'}=\left\sumfam{\bigsqcap_{i\in I}D_i \Rightarrow D_{f(i)}}{f\in I\Rightarrow I'\right};$$
    \item $\Fam{\catD}$ is (non-fibred) cartesian closed for a category $\catD$ with biproducts and small products (such as $\catD=\CMon$ or $\catD=\CMon^{op}$):
    $$\sumfam{D_i}{i\in I}\Rightarrow \sumfam{D'_{i'}}{i'\in I'}=
    \left\sumfam{{\bigsqcap_{i\in I}D_{\pi_1(f(i))}}}{f\in \Pi_{i\in I}\Sigma_{i'\in I'}\catD(D_i, D'_{i'})\right};$$
    \item $\Fam{\catD^{op}}$ is (non-fibred) cartesian closed for an extensive category $\catD$ with a small coproducts and a terminal object (such as $\catD=\Set$ or $\catD=\Top$):
    $$\sumfam{D_i}{i\in I}\Rightarrow \sumfam{D'_{i'}}{i'\in I'}=
    \left\sumfam{{ \bigsqcup_{i\in I}\abstrcompldom{(\pi_2(f(i)))}}}{f\in \Pi_{i\in I}\Sigma_{i'\in I'}\catD(D'_{i'}, D_i\sqcup \terminal)\right}$$
    here, $\abstrcompldom{(g)}$ should be thought of as the complement of the domain of $g$; in particular, for $\catD=\Set$ that is precisely what it is;
    \item free doubly-infinitary distributive categories $\Dist{\catC}=\Fam{\Fam{\catC^{op}}^{op}}$ are always (non-fibred) cartesian closed (see also \cite{nunes2024free}):
    \begin{align*}
&\sumfam{\prodfam{ C_{ji}}{ i\in I_{j}} }{ j\in J} \Rightarrow 
\sumfam{\prodfam{ C'_{j'i'}}{ i'\in I'_{j}} }{ j'\in J'}
= \\
&\quad\sumfam{\prodfam{ C'_{j'i'} }{  j\in J, \langle j', g\rangle=f(j), i'\in I'_{j'}, g(i')= \langle \bot,\bot\rangle  } }{\\
&\hspace{120pt}f \in \Pi_{j\in J} \Sigma_{j'\in J'}
\Pi_{i'\in I'_{j'}}\Sigma_{i \in I_j\sqcup \{ \bot \} }\catC(C_{ji}, C'_{j'i'}) \text{ if } i\neq \bot\text{ else } \{ \bot \}
};
\end{align*}
using the same formula for the exponentials, we see that finite coproducts of products of $\catC$-objects are exponentiable in the free infinitary distributive category on $\catC$;
further, \cite{prezado2024extensive} 
shows that a similar formula for exponentials also exists in free lextensive categories;
    \item $\Fam{\pSet^{op}}$ is (non-fibred) cartesian closed:
   \small  $$\sumfam{D_i}{i\in I}\Rightarrow \sumfam{D'_{i'}}{i'\in I'}=\left\sumfam{\bigsqcup_{i\in I} D'_{\pi_1(f(i))} \setminus (\pi_2(f(i)))^{-1}(D_i)}{ f\in \Pi_{i\in I}\Sigma_{i'\in I'} \pSet(D'_{i'},D_i)\right};$$
   \normalsize  this example is reminiscent of the variant of the Dialectica interpretation discussed by \cite{biering2008cartesian}.
\end{itemize}
\end{example}

\begin{example}[Monoidal closure of lax comma categories]\label{lax-comma-example}
We can categorify Example \ref{ex:fam-monoidal-closed} by building on Example \ref{ex:ldttt-lax-comma}.
Theorem \ref{theo:grothendieck-closed} tells us that the lax comma category $\Cat//\catD$ is monoidal closed for any small complete category $\catD$ with a $\Sigma,\Cat$-cotractable monoidal structure (including any $\Sigma,\Set$-cotractable one).
In particular, this is true for any small complete category $\catD$ that is monoidal closed, has finite biproducts, or is co-extensive with an initial object.
In the last two cases, $\Cat//\catD$ is cartesian closed.
Similarly, $\Cat//\pSet^{op}$ is cartesian closed.
For example, $\Cat//\CMon$ and $\Cat//\CMon^{op}$ are cartesian closed with exponentials given, respectively, by 
\[
(X,x)\Rightarrow (Y, y)=\left(\Sigma_{X\Rightarrow Y}\Pi_X x\multimap (y\circ\ev), \lim_X y\circ (\pi_1(-)) \right)
\]
and 
\[
(X,x)\Rightarrow (Y, y)=\left(\Sigma_{X\Rightarrow Y}\Pi_X (y\circ\ev)\multimap x, \colim_X y\circ (\pi_1(-)) \right). 
\]
That is, the first component consists of the Grothendieck construction of categories of natural transformations between $x$ and $y\circ \ev$ and the second component consists of a (co)limit in $\CMon$ of $y$ considered as a diagram indexed by $X$ (via the functor $X\to Y$ from the first component).
Note that these exponentials are not fibred.
\end{example}

\begin{example}[Predicate-free Dialectica] \label{ex:dialectica-simple}
    Building on Example \ref{ex:dual-prod-self-indexed},
    we have a symmetric monoidal structure $(U, X)\otimes (V, Y)=(U\times V, X\times Y)$ on $\Dial_{pf}=\Sigma_\catC \self(\catC)^{op}$.
    This has a corresponding closed structure: $(U,X)\multimap (V, Y)=(U\Rightarrow V\times (Y\Rightarrow X),U\times Y)$.
    This is a predicate-free version of the Dialectica interpretation \cite{godel1958bisher}.
    The original Dialectica interpretation has a further fibration of predicates over this category, which we omit as it would distract from the main point of this paper.
    See Section \ref{sec:related-work} and \cite{zbMATH01719167} for details.
\end{example}

\begin{example}[Predicate-free Diller-Nahm]
Building on Example \ref{ex:ldtt-locally-indexed},
assume that $\catD$ is a $\catC$-enriched category with biproducts and $\catC$-copowers $C\otimes D$.
Then, $\catL(C)(D, D')=\catC(C, \catD^{op}(D, D'))$ defines a locally $\catC$-indexed category with $\Pi$-types given by $\Pi_C D=C\otimes D$.
It then follows that $\Dill_{pf}=\Sigma_\catC \catL$ is cartesian closed with products given by $(U, X)\times (V, Y)=(U\times V, X\times Y)$ and exponentials given by $(U\Rightarrow V\times \catD(Y, X), U\otimes Y)$.
This is a predicate-free version of the Diller-Nahm interpretation, where one classically considers the case where $\catD$ is the Kleisli category for an additive monad on $\catC$.
Like the Dialectica interpretation, the Diller-Nahm variant can also be extended with a further fibration of predicates over this category.
See Section \ref{sec:related-work} and \cite{zbMATH01719167} for details.
\end{example}

\begin{example}[Fibred closed structures] \label{ex:diller-nahm-simple}
From the formula given in Theorem \ref{theo:grothendieck-closed}, it is immediately clear that the monoidal left-closed structure on $\Sigma_\catC\catL$ will be fibred, if $\catL$ is an $\catC$-indexed left-closed monoidal category (Proposition \ref{ex:monoidal-closure}), as we can then choose $Tx\multimap y=\terminal\in \catC'(W)$ for all $x,y\in \catL(W)$, resulting in the formula 
\begin{align*}
 (X, x)\leftmultimap  (Y,y)
&\cong (X\Rightarrow Y, \Pi_X x\leftmultimap \catL(\ev)(y)),
\end{align*}
for the left-exponential in $\Sigma_\catC \catL$.
The converse also holds: if the monoidal left-closed structure resulting from Theorem \ref{theo:grothendieck-closed} 
is fibred, then $Tx\multimap y\cong\terminal\in \catC'(W)$.
Then, $\Sigma,\catC$-cotractability of $\otimes$ tells us that
$$
\catL(W)(y\otimes z, x)\cong \Sigma f\in \catC'(W)(\terminal,Tx\multimap y).\catL(W)(z,\abstrcompldom (x,y,f)) \cong\catL(W)(z,\abstrcompldom (x,y,!_\terminal)).
$$
We see that $\catL(W)$ is monoidal left-closed with exponential $y\leftmultimap x=\abstrcompldom (x,y,!_\terminal)$, which is an indexed functor, as $\abstrcompldom$ and $\terminal$ are.
Co-dually, we get fibred right-exponentials in $\Sigma_\catC\catL$ from our Theorem \ref{theo:grothendieck-closed} if and only if $\catL$ is an indexed monoidal right-closed category.

These results are a special case of those of Theorem \ref{thm:fibred-monoidal-closed}.
\end{example}

\begin{example}[Cartesian closure for indexed co-extensive categories]
We build on Example \ref{ex:ldttt-op}.
In the special case that $\catL$ is an indexed extensive category with an indexed terminal object $\terminal$, $\Sigma_\catC \catL^{op}$ is cartesian closed and we have the following formula for exponentials:
\begin{align*}
(X,x)\Rightarrow (Y,y) &=(\Pi_X \Sigma_Y \catL(\pi_2)(y)\multimap (\catL(\pi_1)(x)\sqcup \terminal), \Sigma_X \catL(\zeta )(\compldom{v} )),
\end{align*}
i.e. the second component is the $\Sigma$-type (sum, in $\catL$, so product in $\catL^{op}$) of all complements of the domains $\compldom(g)$ of definition of the morphisms $g:\catL(\pi_2)(y)\multimap \catL(\pi_1)(x)\sqcup \terminal$ (which we think of as partial functions) in the first component.
This special case can be seen as a generalisation of the results of \cite{altenkirch2010higher} on higher-order containers.
\end{example}

\begin{example}[Cartesian closure for indexed coproduct/biproduct categories]
We build on Example \ref{ex:ldttt-biproducts}.
In the special case that $\catL$ is a  $\catC'$-enriched indexed category with indexed coproducts and $\Pi$-types, $\Sigma_\catC \catL$ is symmetric monoidal closed and we have the following formula for exponentials:
$$
(X,x)\multimap (Y, y) =(\Pi_X \Sigma_Y \catL(\pi_1)(x)\multimap \catL(\pi_2)(y),\Pi_X \catL(\evf)(y)),
$$
where we use the obvious morphism 
$$
\evf:\Pi_{X} \Sigma_{Y}Z.X\to Y,
$$
that is, the morphism obtained as the composition (where we write $\pi_1$ for the projection $\Sigma_{Y}Z\to Y$)
$$
\Pi_{X}\Sigma_{Y}Z.X\cong (\Pi_{X}\Sigma_{Y}Z)\times X\xto{(\Pi_{X}\pi_1)\times X}(\Pi_{X}Y)\times X\cong (X\Rightarrow Y)\times X\xto{\mathrm{ev}}Y.
$$
Of particular interest are the cases that 
\begin{itemize}
\item $\catL=\catC'{}^{op}$: in this case, the required $\Pi$-types and coproducts always exist (as $\catC'$ has $\Sigma$-types)
and the $\catC'$-enrichment exists as $\catC'$ has $\Pi$-types so its fibres are cartesian closed; we conclude that for any model $\catC':\catC^{op}\to\CAT$ of dependent type theory with $\Pi$-types and strong $\Sigma$-types, $\Sigma_\catC \catC'{}^{op}$ is symmetric monoidal closed;
\item $\catL$ has biproducts: in this case, the monoidal structure, if it exists is a cartesian one, giving us a cartesian closed structure on $\Sigma_\catC \catL$, assuming that the required $\catC'$-enrichment and $\Pi$-types exist; further, observe that $\Sigma_\catC \catL^{op}$ is then also cartesian closed as long as the required $\Sigma$-types exist in $\catL$:
$$
(X,x)\Rightarrow (Y, y) =(\Pi_X \Sigma_Y  \catL(\pi_2)(y)\multimap \catL(\pi_1)(x),\Sigma_X \catL(\evf)(y))
$$
\end{itemize}
This shows that we reproduce the results of \cite[Proposition 4.6.1]{moss_2018} and \cite[Section 6.4]{nunes2023chad}, as a special case of Theorem \ref{theo:grothendieck-closed}.
\end{example}

Finally, we can also use our result as a tool to show that a monoidal structure is not $\Sigma$-cotractable.
\begin{counterexample}\label{ex:fam-mon-not-ccc}
Let $\catD$ be a infinitary distributive category, i.e. a category with small coproducts and finite products such that $\bigsqcup: \Fam{\catD}\to\catD$
preserves finite products.
Suppose further that $\catD$ is not cartesian closed.
For example, $\catD$ could be the category of locally connected topological spaces and continuous functions \cite[Example 8]{nunes2024free} or the category of finite dimensional smooth manifolds of varying dimension and smooth functions \cite[Appendix A]{hsv-fossacs2020}.
Then, by \cite[Theorem 4.2]{nunes2024free},
$\Fam{\catD}$ is not cartesian closed.
As a consequence, by Example \ref{ex:fam-monoidal-closed}, it follows that the products in $\catD$ are not $\Sigma$-cotractable.
\end{counterexample}

\section{Related Work and Outlook}\label{sec:related-work}

We situate our results within the broader literature, tracing their antecedents in the Dialectica and Diller–Nahm interpretations, in lax comma $2$-categories, and in connections with higher order containers and CHAD.  We then relate them to freely generated categorical structures and their dependently typed extensions, before concluding with prospects for further $\Sigma$-cotractable monoidal structures and observations on efficient implementation.

\subsection{Dialectica and Diller–Nahm interpretations (with predicates)}

The categorical study of Dialectica constructions originates in Valeria de Paiva’s Cambridge thesis and her paper in the volume \emph{Categories in Computer Science and Logic}~\cite{zbMATH04104952, zbMATH07648691}.

The earliest examples of similar techniques for constructing exponentials on Grothendieck constructions that we are aware of arose in proof theory when demonstrating the relative consistency of Heyting arithmetic: G\"odel's Dialectica interpretation~\cite{zbMATH04104952, zbMATH07648691, zbMATH01719167, godel1958bisher} and Diller and Nahm's $\CMon$-enriched variant of that interpretation~\cite{diller1974variante} also show a similar presentation.
In Examples~\ref{ex:dialectica-simple} and~\ref{ex:diller-nahm-simple} we give simplified (predicate-free) presentations $\Dial_{pf}$ and $\Dill_{pf}$ of these constructions.
Here, we briefly point out how to extend them with predicates, following \cite{zbMATH01719167}'s categorical presentation of these interpretations.
We first quote the presentation in \cite{zbMATH01719167} for definitions, and next briefly explain how the closed structures are obtained from Theorem~\ref{theo:grothendieck-closed} by building on the closed structures described in Examples~\ref{ex:dialectica-simple} and~\ref{ex:diller-nahm-simple}.

{\color{gray}\begin{quote}[\cite{zbMATH01719167}, Dialectica]
		Suppose that we have a category $T$ which we can think of as interpreting some type
		theory; and suppose that over the category $T$ we have a pre-ordered fibration $p : P \to T$,
		which we can regard as providing for each $I \in T$ a pre-ordered collection of (possibly
		non-standard) predicates $P(I )=(P(I ), \vdash)$. Starting with this data we construct a new
		category $\Dial = \Dial(p)$ which we regard as a category of propositions and proofs.
		We do this as follows.
		\begin{itemize}
			\item The objects $A$ of $\Dial$ are $U, X \in T$ together with $ \alpha\in P(U \times X )$. $(\cdots)$
			Our understanding of the predicate $\alpha$ is not symmetric as regards $U$ and $X$ : we read
			$\alpha$ as $\exists u \in U.\forall x\in X.\alpha(u, x)$, in accord with the form of propositions in the image of the
			Dialectica interpretation.
			\item  Maps of $\Dial$ from $A = (U,X,\alpha)$ to $B = (V, Y,\beta )$ are $(\cdots)$ of the form $f:U\to V$, $F:U\times Y\to X$
			with $\alpha(u, F(u, y)) \vdash \beta(f(u), y)$ in $P(U \times Y )$.
		\end{itemize}
\end{quote}}

We can observe that\footnote{
	We use the locally indexed category $\self(T)$ for the category with products $T$ here.
	See Example~\ref{ex:locally-indexed}.}
\[
\Dial \;\text{is precisely the category}\; \Sigma_{U\in T} \Dial(U)
\qquad \text{for} \qquad
\Dial(U)=\bigl(\Sigma_{X\in \self(T)(U)} P(U \times X)^{op}\bigr)^{op}.
\]
It is a more involved version of the category discussed in Example~\ref{ex:dialectica-simple}, where we additionally endow all objects with predicates.

If we assume that $P\to T$ is fibred cartesian closed, it follows from our Theorem~\ref{theo:grothendieck-closed} that $\Dial$ is monoidal closed for the (symmetric) monoidal structure $(U,X,\alpha)\otimes (V, Y,\beta)=(U\times V, X\times Y, \alpha\wedge \beta)$.
Then
$$(V, Y,\beta)\multimap (W, Z, \gamma)
=
\bigl( (V\Rightarrow W ) \times (V \times Z \Rightarrow Y ),\; V\times Z,\; \rho \bigr),$$
$$\rho((g, G),(v, z)) = \beta(v, G(v, z)) \Rightarrow \gamma(g(v), z).$$
Indeed, the indexed monoidal structure on $\Dial(U)$ with unit $(\terminal, \top)$ and product
\[
(X,\alpha)\otimes (X',\alpha')=\bigl(X\times X',\, P(\id\times \pi_1)(\alpha)\wedge P(\id\times\pi_2)(\alpha')\bigr)
\]
is $\Sigma,\self(T)(U)$-cotractable because
\begin{align*}
	&\Dial(U)\bigl((X, \alpha) \otimes (X', \alpha'), (X'', \alpha'')\bigr)=\\
	&\{F: U \times X''\to X \times X' \mid \alpha(u, \pi_1 (F(u, x''))) \wedge \alpha'(u, \pi_2 (F(u, x''))) \vdash \alpha''(u, x'') \} \\
	&=\Sigma\, F_1 \in \self(T)(U)(\terminal, X'' \Rightarrow X)\,.\, \Dial(U)\bigl((X', \alpha'), (X'', \rho)\bigr),
\end{align*}
where $\rho(u, x'')=\alpha\bigl(u, F_1(u)(*)(x'')\bigr) \Rightarrow \alpha''(u, x'')$.
Therefore,
\begin{align*}
	&\Dial(U)\bigl((X', \alpha'), (X'', \rho)\bigr) \\
	&\quad = \{F_2 : U \times X'' \to X' \mid \alpha'(u, F_2(u, x'')) \vdash \alpha\bigl(u, F_1(u)(*)(x'')\bigr) \Rightarrow \alpha''(u, x'') \},
\end{align*}
showing that we can choose $T(X'', \alpha'') \multimap (X, \alpha) = X'' \Rightarrow X$ and $\abstrcompldom\bigl((X'', \alpha''), (X, \alpha), F_1\bigr) = (X'', \rho)$, where $\rho(u, x'') = \alpha\bigl(u, F_1(u)(*)(x'')\bigr) \Rightarrow \alpha''(u, x'')$.
Further, the indexed category $U\mapsto \Dial(U)$ has $\Pi$-types, given by $\Pi_{V} (X, \alpha)= (V\times X,P(\pi_2)(\alpha))$, meaning that the assumptions of Theorem~\ref{theo:grothendieck-closed} are met.

{\color{gray}\begin{quote}[\cite{zbMATH01719167}, Diller–Nahm]
		Suppose again that we have a pre-ordered set fibration $p : P \to T$, providing for each
		type $I \in T$ a collection of (possibly non-standard) predicates $P(I )$ over $I$. We need
		some additional structure. We suppose that $p : P \to T$ is equipped with a commutative
		monoid $(-)^\bullet$ in the following sense.
		\begin{itemize}
			\item Firstly, $T$ is a category with products and $(-)^\bullet$ is a strong monad on $T$ such that
			each algebra is equipped naturally with the structure of a commutative monoid.
			\item Secondly, we suppose that we have an indexed extension of $(-)^\bullet$ to $P$. For
			$\phi \in  P(I\times A) $ we have $\phi^\bullet \in P(I \times X^\bullet)$. For each $I \in T$, the strength gives an action
			of $(-)^\bullet$ on the (simple slice) category $T/I$ . And the operation $\phi \to \phi^\bullet$ just described
			is an extension of this to the global category $P/I \to T/I$.
		\end{itemize}
		The example to have in mind here is the finite multiset monad on the category of
		sets; of course, that is exactly the monad whose algebras are commutative monoids.
		This monad extends naturally to the subset lattices: if $\phi \subseteq I \times X$ then $\phi^\bullet \subseteq I \times X^\bullet$ is
		defined by
		$\phi^\bullet(i, \xi)$ if and only if $\forall x \in \xi.\phi(i,x)$.
		From the data just described we construct a new
		category $\Dill = \Dill(p)$ which we regard again as a category of propositions and proofs.
		\begin{itemize}
			\item The objects of $\Dill$ are still pairs $U, X \in T$ together with $ \alpha\in P(U \times X )$. $(\cdots)$
			\item  Maps of $\Dill$ from $A = (U,X,\alpha)$ to $B = (V, Y,\beta )$ are $(\cdots)$ of the form $f:U\to V$, $F:U\times Y\to X^\bullet$
			with $\alpha^\bullet(u, F(u, y)) \vdash \beta(f(u), y)$ in $P(U \times Y )$.
		\end{itemize}
\end{quote}}

That is,
\[
\Dill \;\text{is precisely the category}\;
\Sigma_{U\in T}\Dill(U)
\qquad\text{for} \] 
\[\Dill(U)=\Bigl(\mathrm{Kleisli}((-)^\bullet)\bigl(\Sigma_{X\in \self(T)(U)} P(U \times X)^{op}\bigr)\Bigr)^{op},
\]
for the lifted monad $(-)^\bullet$ on $\Sigma_{X\in \self(T)(U)} P(U \times X)^{op}$.
It is a more involved version of the category discussed in Example~\ref{ex:diller-nahm-simple}, where we additionally endow all objects with predicates.

If we assume that $P\to T$ is a fibred cartesian closed category over a bicartesian closed category $T$, and that $P$ is an extensive indexed category in the sense that we have a natural isomorphism $[-]:\bigsqcap_{i=1}^N P(X_i)\cong P(\bigsqcup_{i=1}^N X_i)$, and that $(-)^\bullet$ is an additive monad in the sense that $(\bigsqcup_{i=1}^N X_i)^\bullet \cong \bigsqcap_{i=1}^N X_i^\bullet$ (we will abuse notation slightly and leave these two isomorphisms implicit), then it follows from our Theorem~\ref{theo:grothendieck-closed} that $\Dial$ has the cartesian closed structure $(U,X,\alpha)\times (V, Y,\beta)=(U\times V, X\sqcup Y, [\alpha, \beta])$ and
\[
(V, Y,\beta)\Rightarrow (W, Z, \gamma)
=
\bigl( (V\Rightarrow W ) \times (V \times Z \Rightarrow Y^\bullet ),\; V\times Z,\; \rho \bigr),
\]
where $\rho((g, G),(v, z)) = \beta^\bullet(v, G(v, z)) \Rightarrow \gamma(g(v), z)$.

Indeed, the indexed product structure on $\Dill(U)$ with unit $(\initial, [])$ and product
\[
(X,\alpha)\times (X',\alpha')=(X\sqcup X', [\alpha,\alpha'])
\]
is $\Sigma,\self(T)(U)$-cotractable because \small 
\begin{align*}
	&\Dill(U)\bigl((X, \alpha) \times (X', \alpha'), (X'', \alpha'')\bigr)=\\
	&\{F: U \times X''\to X^\bullet \times X'{}^\bullet\cong (X\sqcup X')^\bullet \mid \alpha^\bullet(u, \pi_1 (F(u, x''))) \wedge \alpha'{}^\bullet(u, \pi_2 (F(u, x''))) \vdash \alpha''(u, x'') \} \\
	&=\Sigma\, F_1 \in \self(T)(U)(\terminal, X'' \Rightarrow X)\,.\, \Dill(U)\bigl((X', \alpha'), (X'', \rho)\bigr),
\end{align*} \normalsize 
where $\rho(u, x'')=\alpha^\bullet\bigl(u, F_1(u)(*)(x'')\bigr) \Rightarrow \alpha''(u, x'')$.
Therefore,
\begin{align*}
	&\Dill(U)\bigl((X', \alpha'), (X'', \rho)\bigr) \\
	&\quad= \{F_2 : U \times X'' \to X'{}^\bullet \mid \alpha'{}^\bullet(u, F_2(u, x'')) \vdash \alpha^\bullet\bigl(u, F_1(u)(*)(x'')\bigr) \Rightarrow \alpha''(u, x'') \},
\end{align*}
showing that we can choose $T(X'', \alpha'') \multimap (X, \alpha) = X'' \Rightarrow X^\bullet$ and $\abstrcompldom\bigl((X'', \alpha''), (X, \alpha), F_1\bigr) = (X'', \rho)$, where $\rho(u, x'') = \alpha^\bullet\bigl(u, F_1(u)(*)(x'')\bigr) \Rightarrow \alpha''(u, x'')$.
Further, the indexed category $U\mapsto \Dill(U)$ has $\Pi$-types, given by $\Pi_{V} (X, \alpha)= (V\times X,P(\pi_2)(\alpha))$, meaning that the assumptions of Theorem~\ref{theo:grothendieck-closed} are met.

These Examples raise the more general question under what circumstances, given two fibrations $p\colon P\to Q$ and $q\colon Q\to R$, the fibration $(q\circ p^{op})^{op}$ (using the fibrewise opposite fibration and composition of fibrations) has a monoidal closed total space.
Assuming that $q$ is a model of dependent type theory with $\Pi$-types and strong $\Sigma$-types, that amounts, in the light of our Theorem~\ref{theo:grothendieck-closed}, to characterising when an indexed monoidal structure on the fibres of $(q\circ p^{op})^{op}$ is $\Sigma, Q(-)$-cotractable and when $(q\circ p^{op})^{op}$ has $\Pi$-types.

\subsection{Lax comma $2$-categories}
There has recently been a renewed interest in the study of lax comma $2$-categories in the literature; see, for example, \cite{arXiv:2504.12965, zbMATH08028627, zbMATH08058043, zbMATH07844805, zbMATH07766161, clementino2024lax, zbMATH08084496, zbMATH07558575}.
As illustrated in Examples~\ref{ex:ldttt-lax-comma} and~\ref{lax-comma-example}, lax comma categories fall within the scope of our results.
In particular, we have established that
\[
\CAT \sslash X
\]
is complete and cartesian closed whenever $X$ is complete and cartesian closed.
Moreover, we have shown that
\[
\CAT \sslash \CMon
\]
is cartesian closed.
This strengthens existing results in the literature, such as~\cite{zbMATH07766161}, which considered only the case of fibred exponentials.

We also observe that lax comma $2$-categories, in full generality, extend the Grothendieck construction, which they subsume as a special case.
Thus, the present work points towards a more general theory of closed structures on lax comma $2$-categories.
Developing such a theory is the subject of ongoing research.

\subsection{Higher order containers}
\cite{altenkirch2010higher} previously gave the special case of our formula for exponentials in $\Fam{\Set^{op}}=\Sigma_\Set\CAT(-,\Set^{op})$, which they interpret as a category of containers (or polynomial endofunctors).
Such containers are useful in programming as they give a certain, concrete representation of datatypes.
As such, the authors use it to give a notion of “higher-order container”.
Our construction shows that the same construction can be carried out for more general notions of containers valued in a category with a $\Sigma$-tractable monoidal structure, such as an extensive category with its coproduct structure or a category with biproducts.
Some examples of such containers (such as additive containers, as in CHAD, see below) have already found useful programming applications.
However, we believe there might be potential for many more notions of container and lens to find use in programming.
We hope that the formulas given in this work can contribute to principled programming idioms for such data representations.

\subsection{CHAD: Combinatory Homomorphic Automatic Differentiation}
Recent work~\cite{vakar2020reverse,vakar2021chad,nunes2023chad} has analysed the special case of our formula for exponentials in the case that the fibre categories $\catL$ have biproducts.
They show that this case can be used to prove correctness (see loc.\ cit.) and to give an efficient implementation~\cite{smeding2024efficient} of a programming technique called Automatic Differentiation (AD), typically the method of choice for efficiently computing derivatives of numerical programs.
It is tempting to give a similar analysis, based on Grothendieck constructions, for reverse-mode AD methods for calculating higher derivatives~\cite{betancourt2018geometric,huot2021higher}.

\subsection{\textbf{Freely generated categorical structures}}
The case of our formula for exponentials in Grothendieck constructions $\Sigma_\catC \catL$ indexed cartesian closed categories indexed by a cartesian closed category seems to be well known.
It is used, in particular, for the case of families $\Fam{\catD}=\Sigma_\Set \CAT(-,\catD)$ valued in a cartesian closed category $\catD$ (that is, the freely generated category with small coproducts on $\catD$) by~\cite{zbMATH07186728}.

Recently, \cite{nunes2024free} and \cite{prezado2024extensive} analysed exponentiability in freely generated distributive and lextensive categories generated from an arbitrary locally small category $\catD$ (which need not be cartesian closed), respectively.
The formula used for the exponentials arises as a special case of the present work.
These works raise the question whether our method is suitable for a study of exponentiability in further kinds of freely generated categorical structures.

\subsection{Dependently typed Dialectica}
In a recent tour de force, \cite{von_glehn_2015,glehnmoss2018,moss_2018} showed that the Dialectica and Diller–Nahm interpretations can be extended to dependently typed languages.
In particular, they show the following two results, which are in a sense dependently typed variants of two of our examples:
\begin{itemize}
	\item starting from a model of dependent type theory with strong $\Sigma$-types, $\Pi$-types and identity types that is \emph{extensive} in a suitable sense, they construct another model of dependent type theory with $\Sigma$-types, $\Pi$-types and identity types, generalizing our Example~\ref{ex:ldttt-op}, in a sense;
	\item starting from a model of dependent type theory with strong $\Sigma$-types, $\Pi$-types and identity types with an \emph{additive monad, using a Kleisli construction}, they construct another model of dependent type theory with $\Sigma$-types, $\Pi$-types and identity types; this is closely related to but not quite a generalisation of our Example~\ref{ex:ldttt-biproducts}.
\end{itemize}
Compared to their work, on the one hand, we do not consider the considerable amount of structure needed to interpret dependent types in a Grothendieck construction, so in this sense our work is more limited.
On the other hand, we generalise from two examples of products in extensive categories and Kleisli categories of additive monads to $\Sigma$-cotractable monoidal structures.
The latter also give rise to various new examples of cartesian and non-cartesian (even non-symmetric) monoidal closed structures on Grothendieck constructions.
In that sense, our work is more general.

Dependently typed ($\Sigma$-type) equivalents of non-cartesian monoidal structures are surprisingly subtle~\cite{vakar2017search}, so it is not clear if a common generalisation of both approaches is possible.
The most promising avenue might be to limit oneself to cartesian type theories and to pursue a notion of $\Sigma$-cotractable $\Sigma$-type to generalise $\Sigma$-cotractable binary products as well as the examples in~\cite{glehnmoss2018}.

\subsection{Other $\Sigma$-(co)tractable monoidal structures}
So far, we have shown that typical examples of $\Sigma$-cotractable monoidal structures are:
\begin{itemize}
	\item coproducts in any category;
	\item products in a coextensive category with an initial object;
	\item a monoidal left-closed structure on a category with an initial object;
	\item products in $\pSet^{op}$.
\end{itemize}
In fact, we have seen that for posets (and, more generally, preorders) $\Sigma$-cotractability of the product is equivalent to cartesian closure plus an initial object.
For non-thin categories, we have no such characterisation of $\Sigma$-cotractability.
This raises the question whether there are other interesting, naturally occurring examples of $\Sigma$-cotractable products and non-cartesian monoidal structures for non-thin categories.

\subsection{Efficient implementation}
\cite{smeding2024efficient} shows that, when \cite{nunes2023chad} is  interpreted as a recipe for generating code in a functional programming language, programs that make use of the Dialectica-like monoidal closed structure presented in this paper can be inefficient.
Interestingly, the non-fibred nature of the exponentials can result in recomputation.
In the particular example of CHAD-style automatic differentiation, a workaround is possible by closure converting the code, essentially by using a representation for the exponential as a coend via the co-Yoneda lemma.

Containers and lenses are an increasingly important data representation, particularly in machine learning applications where data needs to flow in both directions~\cite{cruttwell2022categorical}.
Therefore, it would be interesting to have a better understanding of the precise nature of these efficiency pathologies arising for higher-order containers, as well as generally applicable solutions.

\section*{Acknowledgements}

The authors thank the anonymous referee for a careful reading of the manuscript and for several valuable suggestions that have improved its presentation.

The first author gratefully acknowledges support from the Fields Institute for Research in Mathematical Sciences through a Fields Research Fellowship (2023), as well as ongoing support from the Centre for Mathematics of the University of Coimbra (CMUC) — UIDB/00324/2020, funded by the Portuguese Government through FCT/MCTES.  
The author also expresses sincere gratitude to Henrique Bursztyn and the Instituto Nacional de Matemática Pura e Aplicada (IMPA) for their generous hospitality.

This project has further benefited from support by the NWO Veni grant VI.Veni.201.124 and the ERC project \textsc{FoRECAST}.

%% Bibliography


\begin{thebibliography}{10}
	
	\bibitem{zbMATH07186728}
	J.~Ad{\'a}mek and J.~Rosick{\'y}.
	\newblock How nice are free completions of categories?
	\newblock {\em Topology Appl.}, 273:106972, 2020.
	
	\bibitem{ahman2016dependent}
	D.~Ahman, N.~Ghani, and G.~D. Plotkin.
	\newblock Dependent types and fibred computational effects.
	\newblock In {\em International Conference on Foundations of Software Science
		and Computation Structures}, pages 36--54. Springer, 2016.
	
	\bibitem{altenkirch2010higher}
	T.~Altenkirch, P.~Levy, and S.~Staton.
	\newblock Higher-order containers.
	\newblock In {\em Programs, Proofs, Processes: 6th Conference on Computability
		in Europe, CiE 2010, Ponta Delgada, Azores, Portugal, June 30--July 4, 2010.
		Proceedings 6}, pages 11--20. Springer, 2010.
	
	\bibitem{zbMATH0681654}
	J.~B{\'e}nabou.
	\newblock Introduction to bicategories.
	\newblock In {\em Reports of the Midwest Category Seminar}, volume~47 of {\em
		Lecture Notes in Mathematics}, pages 1--77. Springer, Berlin, 1967.
	
	\bibitem{betancourt2018geometric}
	M.~Betancourt.
	\newblock A geometric theory of higher-order automatic differentiation.
	\newblock {\em arXiv preprint arXiv:1812.11592}, 2018.
	
	\bibitem{biering2008cartesian}
	B.~Biering.
	\newblock Cartesian closed dialectica categories.
	\newblock {\em Annals of pure and applied logic}, 156(2-3):290--307, 2008.
	
	\bibitem{zbMATH04125649}
	G.~J. Bird, G.~M. Kelly, A.~J. Power, and R.~H. Street.
	\newblock Flexible limits for 2-categories.
	\newblock {\em J. Pure Appl. Algebra}, 61(1):1--27, 1989.
	
	\bibitem{zbMATH04105188}
	R.~Blackwell, G.~M. Kelly, and A.~J. Power.
	\newblock Two-dimensional monad theory.
	\newblock {\em J. Pure Appl. Algebra}, 59(1):1--41, 1989.
	
	\bibitem{zbMATH06284339}
	J.~Bourke.
	\newblock Two-dimensional monadicity.
	\newblock {\em Adv. Math.}, 252:708--747, 2014.
	
	\bibitem{MR1201048}
	A.~Carboni, S.~Lack, and R.~F.~C. Walters.
	\newblock Introduction to extensive and distributive categories.
	\newblock {\em J. Pure Appl. Algebra}, 84(2):145--158, 1993.
	
	\bibitem{clairambault2014biequivalence}
	P.~Clairambault and P.~Dybjer.
	\newblock The biequivalence of locally cartesian closed categories and
	{M}artin-{L}{\"o}f type theories.
	\newblock {\em Mathematical Structures in Computer Science}, 24(6):e240606,
	2014.
	
	\bibitem{arXiv:2504.12965}
	M.~M. Clementino, D.~Hofmann, and R.~Prezado.
	\newblock Topological lax comma categories.
	\newblock {\em arXiv preprint arXiv:2504.12965}, 2025.
	
	\bibitem{zbMATH08028627}
	M.~M. Clementino, D.~Hofmann, and W.~Tholen.
	\newblock Cauchy convergence in {V}-normed categories.
	\newblock {\em Adv. Math.}, 470:110247, 2025.
	
	\bibitem{zbMATH08058043}
	M.~M. Clementino and G.~Janelidze.
	\newblock Effective descent morphisms of filtered preorders.
	\newblock {\em Order}, 42(1):181--192, 2025.
	
	\bibitem{zbMATH07766161}
	M.~M. Clementino and F.~Lucatelli~Nunes.
	\newblock Lax comma categories of ordered sets.
	\newblock {\em Quaest. Math.}, 46:145--159, 2023.
	
	\bibitem{zbMATH07844805}
	M.~M. Clementino and F.~Lucatelli~Nunes.
	\newblock Lax comma 2-categories and admissible 2-functors.
	\newblock {\em Theory Appl. Categ.}, 40:180--226, 2024.
	
	\bibitem{clementino2024lax}
	M.~M. Clementino, F.~Lucatelli~Nunes, and R.~Prezado.
	\newblock Lax comma categories: {cartesian} closedness, extensivity,
	topologicity, and descent.
	\newblock {\em Theory Appl. Categ.}, 41:516--530, 2024.
	
	\bibitem{zbMATH08084496}
	M.~M. Clementino and R.~Prezado.
	\newblock Effective descent morphisms of ordered families.
	\newblock {\em Quaest. Math.}, 48(8):1197--1212, 2025.
	
	\bibitem{zbMATH01685986}
	J.~R.~B. Cockett and S.~Lack.
	\newblock The extensive completion of a distributive category.
	\newblock {\em Theory Appl. Categ.}, 8:541--554, 2001.
	
	\bibitem{cruttwell2022categorical}
	G.~S.~H. Cruttwell, B.~Gavranovi{\'c}, N.~Ghani, P.~Wilson, and F.~Zanasi.
	\newblock Categorical foundations of gradient-based learning.
	\newblock In {\em European Symposium on Programming}, pages 1--28, Cham, 2022.
	Springer International Publishing.
	
	\bibitem{day1970closed}
	B.~Day.
	\newblock On closed categories of functors.
	\newblock In {\em Reports of the Midwest Category Seminar, IV}, volume 137 of
	{\em Lecture Notes in Mathematics}, pages 1--38. Springer, Berlin, 1970.
	
	\bibitem{zbMATH04104952}
	V.~C.~V. de~Paiva.
	\newblock The {Dialectica} categories.
	\newblock In {\em Categories in Computer Science and Logic}, volume~92 of {\em
		Contemporary Mathematics}, pages 47--62. American Mathematical Society,
	Providence, RI, 1989.
	
	\bibitem{diller1974variante}
	J.~Diller.
	\newblock Eine variante zur dialectica-interpretation der heyting-arithmetik
	endlicher typen.
	\newblock {\em Archiv f{\"u}r mathematische Logik und Grundlagenforschung},
	16(1-2):49--66, 1974.
	
	\bibitem{godel1958bisher}
	K.~Gödel.
	\newblock {\"U}ber eine bisher noch nicht benutzte erweiterung des finiten
	standpunktes.
	\newblock {\em Dialectica}, 12(3-4):280--287, 1958.
	
	\bibitem{zbMATH06810410}
	D.~Gepner, R.~Haugseng, and T.~Nikolaus.
	\newblock Lax colimits and free fibrations in {{\(\infty\)}}-categories.
	\newblock {\em Doc. Math.}, 22:1225--1266, 2017.
	
	\bibitem{goubault2002logical}
	J.~Goubault-Larrecq, S.~Lasota, and D.~Nowak.
	\newblock Logical relations for monadic types.
	\newblock In {\em International Workshop on Computer Science Logic}, pages
	553--568. Springer, 2002.
	
	\bibitem{MR0213413}
	J.~W. Gray.
	\newblock Fibred and cofibred categories.
	\newblock In {\em Proc. {C}onf. {C}ategorical {A}lgebra ({L}a {J}olla,
		{C}alif., 1965)}, pages 21--83. Springer, New York, 1966.
	
	\bibitem{zbMATH03447118}
	J.~W. Gray.
	\newblock {\em Formal Category Theory: Adjointness for 2-Categories}, volume
	391 of {\em Lecture Notes in Mathematics}.
	\newblock Springer, Berlin, 1974.
	
	\bibitem{gray1974quasi}
	J.~W. Gray.
	\newblock Quasi-kan extensions for 2-categories.
	\newblock {\em Bulletin of the American Mathematical Society}, 80(4):142--147,
	1974.
	
	\bibitem{MR1603475}
	A.~Grothendieck.
	\newblock Technique de descente et th\'eor\`emes d'existence en g\'eometrie
	alg\'ebrique. {I}. {G}\'en\'eralit\'es. {D}escente par morphismes
	fid\`element plats.
	\newblock In {\em S\'eminaire {B}ourbaki, {V}ol.\ 5}, pages Exp.\ No.\ 190,
	299--327. Soc. Math. France, Paris, 1995.
	
	\bibitem{grothendieck1971revetements}
	A.~Grothendieck and M.~Raynaud.
	\newblock {\em Rev{\^e}tements {\'e}tales et groupe fondamental (SGA 1)},
	volume 224 of {\em Lecture Notes in Mathematics}.
	\newblock Springer, Berlin, 1971.
	
	\bibitem{zbMATH06154005}
	N.~Gurski.
	\newblock {\em Coherence in three-dimensional category theory}, volume 201 of
	{\em Camb. Tracts Math.}
	\newblock Cambridge: Cambridge University Press, 2013.
	
	\bibitem{hofmann1997syntax}
	M.~Hofmann.
	\newblock {\em Syntax and semantics of dependent types}, pages 13--54.
	\newblock Springer London, London, 1997.
	
	\bibitem{hofmann1998groupoid}
	M.~Hofmann and T.~Streicher.
	\newblock The groupoid interpretation of type theory.
	\newblock In {\em Twenty-Five Years of Constructive Type Theory (Venice,
		1995)}, volume~36 of {\em Oxford Logic Guides}, pages 83--111. Oxford
	University Press, 1998.
	
	\bibitem{huot2021higher}
	M.~Huot, S.~Staton, and M.~V{\'a}k{\'a}r.
	\newblock Higher order automatic differentiation of higher order functions.
	\newblock {\em Logical Methods in Computer Science}, 18, 2022.
	
	\bibitem{hsv-fossacs2020}
	M.~Huot, S.~Staton, and M.~Vákár.
	\newblock Correctness of automatic differentiation via diffeologies and
	categorical gluing.
	\newblock In {\em Proc.~FoSSaCS}, 2020.
	
	\bibitem{zbMATH01719167}
	J.~M.~E. Hyland.
	\newblock Proof theory in the abstract.
	\newblock {\em Ann. Pure Appl. Logic}, 114(1-3):43--78, 2002.
	
	\bibitem{jacobs1999categorical}
	B.~Jacobs.
	\newblock {\em Categorical logic and type theory}.
	\newblock Elsevier, 1999.
	
	\bibitem{janelidze2004beyond}
	G.~Janelidze, M.~Sobral, and W.~Tholen.
	\newblock Beyond barr exactness: effective descent morphisms.
	\newblock {\em Encyclopedia of Mathematics and its Applications}, pages
	359--406, 2004.
	
	\bibitem{johnstone2002sketches}
	P.~T. Johnstone.
	\newblock {\em Sketches of an elephant: A topos theory compendium}, volume~2.
	\newblock Oxford University Press, 2002.
	
	\bibitem{kapulkin2021simplicial}
	K.~Kapulkin and P.~L. Lumsdaine.
	\newblock The simplicial model of univalent foundations (after {Voevodsky}).
	\newblock {\em Journal of the European Mathematical Society}, 23(6):2071--2126,
	2021.
	
	\bibitem{keigher1978symmetric}
	W.~F. Keigher.
	\newblock Symmetric monoidal closed categories generated by commutative adjoint
	monads.
	\newblock {\em Cahiers de topologie et g{\'e}om{\'e}trie diff{\'e}rentielle},
	19(3):269--293, 1978.
	
	\bibitem{kock1971closed}
	A.~Kock.
	\newblock Closed categories generated by commutative monads.
	\newblock {\em Journal of the Australian Mathematical Society}, 12(4):405--424,
	1971.
	
	\bibitem{zbMATH01839069}
	S.~Lack.
	\newblock Codescent objects and coherence.
	\newblock {\em J. Pure Appl. Algebra}, 175(1-3):223--241, 2002.
	
	\bibitem{zbMATH05659661}
	S.~Lack.
	\newblock A 2-categories companion.
	\newblock In {\em Towards higher categories}, pages 105--191. Berlin: Springer,
	2010.
	
	\bibitem{zbMATH05770299}
	S.~Lack.
	\newblock Icons.
	\newblock {\em Appl. Categ. Struct.}, 18(3):289--307, 2010.
	
	\bibitem{zbMATH05994242}
	S.~Lack and M.~Shulman.
	\newblock Enhanced 2-categories and limits for lax morphisms.
	\newblock {\em Adv. Math.}, 229(1):294--356, 2012.
	
	\bibitem{zbMATH01787496}
	I.~J. Le~Creurer, F.~Marmolejo, and E.~M. Vitale.
	\newblock Beck's theorem for pseudo-monads.
	\newblock {\em J. Pure Appl. Algebra}, 173(3):293--313, 2002.
	
	\bibitem{leinster1998basicbicategories}
	T.~Leinster.
	\newblock Basic bicategories.
	\newblock {\em arXiv preprint arXiv:math/9810017}, 1998.
	
	\bibitem{levy2012call}
	P.~B. Levy.
	\newblock {\em Call-by-Push-Value: A {Functional/Imperative} Synthesis}.
	\newblock Springer, 2012.
	
	\bibitem{arXiv:1711.02051}
	F.~{Lucatelli Nunes}.
	\newblock Pseudoalgebras and non-canonical isomorphisms.
	\newblock {\em Appl. Categ. Structures}, 27(1):55--63, 2019.
	
	\bibitem{MR3491845}
	F.~Lucatelli~Nunes.
	\newblock On biadjoint triangles.
	\newblock {\em Theory Appl. Categ.}, 31:Paper No. 9, 217--256, 2016.
	
	\bibitem{2018arXiv180201767L}
	F.~Lucatelli~Nunes.
	\newblock Ph{D} thesis. {U}niversity of {C}oimbra. {P}seudomonads and {D}escent
	({C}hapter 1).
	\newblock {\em arXiv preprint arXiv:1812.11592}, 2017.
	
	\bibitem{zbMATH06970806}
	F.~Lucatelli~Nunes.
	\newblock On lifting of biadjoints and lax algebras.
	\newblock {\em Categ. Gen. Algebr. Struct. Appl.}, 9(1):29--58, 2018.
	
	\bibitem{zbMATH06881682}
	F.~Lucatelli~Nunes.
	\newblock Pseudo-{K}an extensions and descent theory.
	\newblock {\em Theory Appl. Categ.}, 33:390--444, 2018.
	
	\bibitem{zbMATH07629358}
	F.~Lucatelli~Nunes.
	\newblock Semantic factorization and descent.
	\newblock {\em Appl. Categ. Struct.}, 30(6):1393--1433, 2022.
	
	\bibitem{nunes2025unravelingiterativechad}
	F.~Lucatelli~Nunes, G.~Plotkin, and M.~V{\'a}k{\'a}r.
	\newblock Unraveling the iterative {CHAD}.
	\newblock {\em arXiv preprint arXiv:2505.15002}, 2025.
	
	\bibitem{arXiv:2410.22876}
	F.~Lucatelli~Nunes and R.~Prezado.
	\newblock Functors preserving effective descent morphisms.
	\newblock {\em arXiv preprint arXiv:2410.22876}, 2024.
	
	\bibitem{prezado2024extensive}
	F.~Lucatelli~Nunes, R.~Prezado, and M.~V{\'a}k{\'a}r.
	\newblock Free extensivity via distributivity.
	\newblock {\em Port. Math.}, 82(1-2):177--204, 2025.
	
	\bibitem{nunes2023chad}
	F.~{Lucatelli Nunes} and M.~V{\'a}k{\'a}r.
	\newblock {CHAD} for expressive total languages.
	\newblock {\em Mathematical Structures in Computer Science}, 33(4-5):311--426,
	2023.
	
	\bibitem{nunes2024free}
	F.~Lucatelli~Nunes and M.~V{\'a}k{\'a}r.
	\newblock Free doubly-infinitary distributive categories are cartesian closed.
	\newblock {\em arXiv preprint arXiv:2403.10447}, 2025.
	
	\bibitem{zbMATH03367095}
	S.~Mac~Lane.
	\newblock {\em Categories for the {Working} {Mathematician}}, volume~5 of {\em
		Graduate Texts in Mathematics}.
	\newblock Springer-Verlag, New York, 2 edition, 1998.
	
	\bibitem{mitchell1992notes}
	J.~C. Mitchell and A.~Scedrov.
	\newblock Notes on sconing and relators.
	\newblock In {\em International Workshop on Computer Science Logic}, pages
	352--378. Springer, 1992.
	
	\bibitem{zbMATH07229468}
	J.~Moeller and C.~Vasilakopoulou.
	\newblock Monoidal {Grothendieck} construction.
	\newblock {\em Theory Appl. Categ.}, 35:1159--1207, 2020.
	
	\bibitem{moss_2018}
	S.~K. Moss.
	\newblock {\em The {Dialectica} Models of Type Theory}.
	\newblock PhD thesis, University of Cambridge, 2018.
	
	\bibitem{glehnmoss2018}
	S.~K. Moss and T.~von Glehn.
	\newblock Dialectica models of type theory.
	\newblock In A.~Dawar and E.~Gr{\"{a}}del, editors, {\em Proceedings of the
		33rd Annual {ACM/IEEE} Symposium on Logic in Computer Science, {LICS} 2018,
		Oxford, UK, July 09-12, 2018}, pages 739--748. {ACM}, 2018.
	
	\bibitem{north2019towards}
	P.~R. North.
	\newblock Towards a directed homotopy type theory.
	\newblock {\em Electronic Notes in Theoretical Computer Science}, 347:223--239,
	2019.
	
	\bibitem{palmgren1990domain}
	E.~Palmgren and V.~Stoltenberg-Hansen.
	\newblock Domain interpretations of {M}artin-{L}{\"o}f’s partial type theory.
	\newblock {\em Annals of Pure and Applied Logic}, 48(2):135--196, 1990.
	
	\bibitem{zbMATH07558575}
	P.~Perrone and W.~Tholen.
	\newblock Kan extensions are partial colimits.
	\newblock {\em Appl. Categ. Struct.}, 30(4):685--753, 2022.
	
	\bibitem{2021arXiv211210072P}
	R.~Prezado and F.~Lucatelli~Nunes.
	\newblock Descent for internal multicategory functors.
	\newblock {\em Appl. Categ. Struct.}, 31(1), 2023.
	
	\bibitem{seely1984locally}
	R.~A.~G. Seely.
	\newblock Locally cartesian closed categories and type theory.
	\newblock {\em Mathematical Proceedings of the Cambridge Philosophical
		Society}, 95(1):33--48, 1984.
	
	\bibitem{shulman2008framed}
	M.~Shulman.
	\newblock Framed bicategories and monoidal fibrations.
	\newblock {\em Theory and Applications of Categories}, 20(18):650--738, 2008.
	
	\bibitem{smeding2024efficient}
	T.~J. Smeding and M.~I. V{\'a}k{\'a}r.
	\newblock Efficient {CHAD}.
	\newblock {\em Proceedings of the ACM on Programming Languages},
	8(POPL):1060--1088, 2024.
	
	\bibitem{zbMATH03512374}
	R.~Street.
	\newblock Fibrations and {Yoneda}'s lemma in a 2-category.
	\newblock In {\em Category Seminar (Proc., Sydney, 1972/1973)}, volume 420 of
	{\em Lecture Notes in Mathematics}, pages 104--133. Springer, Berlin, 1974.
	
	\bibitem{zbMATH03523837}
	R.~Street.
	\newblock Limits indexed by category-valued 2-functors.
	\newblock {\em J. Pure Appl. Algebra}, 8:149--181, 1976.
	
	\bibitem{zbMATH03680046}
	R.~Street.
	\newblock Fibrations in bicategories.
	\newblock {\em Cah. Topologie G{\'e}om. Diff{\'e}r. Cat{\'e}goriques},
	21:111--159, 1980.
	
	\bibitem{zbMATH04008629}
	R.~Street.
	\newblock Correction to ''{Fibrations} in bicategories''.
	\newblock {\em Cah. Topologie G{\'e}om. Diff{\'e}r. Cat{\'e}goriques},
	28(1):53--56, 1987.
	
	\bibitem{zbMATH07648691}
	D.~Trotta, M.~Spadetto, and V.~de~Paiva.
	\newblock Dialectica principles via {G{\"o}del} doctrines.
	\newblock {\em Theoretical Computer Science}, 947:113692, 2023.
	
	\bibitem{vakar2015categorical}
	M.~V{\'a}k{\'a}r.
	\newblock A categorical semantics for linear logical frameworks.
	\newblock In {\em International Conference on Foundations of Software Science
		and Computation Structures}, pages 102--116. Springer, 2015.
	
	\bibitem{vakar2017search}
	M.~V{\'a}k{\'a}r.
	\newblock In search of effectful dependent types.
	\newblock {\em arXiv preprint arXiv:1706.07997}, 2017.
	\newblock DPhil Thesis, University of Oxford.
	
	\bibitem{vakar2020reverse}
	M.~V{\'a}k{\'a}r.
	\newblock Reverse {AD} at higher types: Pure, principled and denotationally
	correct.
	\newblock In {\em ESOP}, pages 607--634, 2021.
	
	\bibitem{vakar2021chad}
	M.~V{\'a}k{\'a}r and T.~Smeding.
	\newblock {CHAD}: Combinatory homomorphic automatic differentiation.
	\newblock {\em arXiv preprint arXiv:2103.15776}, 2021.
	
	\bibitem{zbMATH07005873}
	C.~Vasilakopoulou.
	\newblock On enriched fibrations.
	\newblock {\em Cah. Topol. G{\'e}om. Diff{\'e}r. Cat{\'e}g.}, 59(4):354--387,
	2018.
	
	\bibitem{von_glehn_2015}
	T.~von Glehn.
	\newblock {\em Polynomials and Models of Type Theory}.
	\newblock PhD thesis, University of Cambridge, 2015.
	
\end{thebibliography}
\end{document}